\numberwithin{equation}{section}
\newtheorem{prop}{Proposition}[section]
\newtheorem{lm}[prop]{Lemma}
\newtheorem*{teoL}{Theorem L}
\newtheorem*{teoS}{Theorem S}
\theoremstyle{definition}
\newtheorem{oss}[prop]{Remark}
\newtheorem*{ack}{Acknowledgments}
\title[Singular orthotropic functionals]{Singular orthotropic functionals\\ with nonstandard growth conditions}
\author[Bousquet]{Pierre Bousquet}
\author[Brasco]{Lorenzo Brasco}
\author[Leone]{Chiara Leone}
\address[P. Bousquet]{Institut de Math\'ematiques de Toulouse, CNRS UMR 5219
\newline\indent Universit\'e de Toulouse
\newline\indent F-31062 Toulouse Cedex 9, France.}
\email{pierre.bousquet@math.univ-toulouse.fr}
\address[L.\ Brasco]{Dipartimento di Matematica e Informatica
	\newline\indent
	Universit\`a degli Studi di Ferrara
	\newline\indent
	Via Machiavelli 35, 44121 Ferrara, Italy}
\email{lorenzo.brasco@unife.it}
\address[C.\ Leone]{Dipartimento di Matematica ``R. Caccioppoli''
\newline\indent
Universit\`a degli Studi di Napoli ``Federico II''
\newline\indent
Via Cinthia, Complesso Universitario di Monte S. Angelo, 80126 Napoli, Italy}
\email{chiara.leone@unina.it}
\subjclass[2010]{35J75, 35B65, 49K20}
\keywords{Nonstandard growth conditions, singular elliptic equations, Lipschitz regularity, Sobolev regularity, orthotropic functionals}
\begin{document}

\begin{abstract}
We pursue the study of a model convex functional with orthotropic structure and nonstandard growth conditions, this time focusing on the sub-quadratic case. We prove that bounded local minimizers are locally Lipschitz. No restriction on the ratio between the highest and the lowest growth rates are needed. The result holds also in presence of a non-autonomous lower order term, under sharp integrability assumptions. Finally, we prove higher differentiability of bounded local minimizers, as well.
\end{abstract}

\maketitle

\begin{center}
\begin{minipage}{10cm}
\small
\tableofcontents
\end{minipage}
\end{center}

\section{Introduction}

\subsection{Overview}
In this paper we expand on the gradient regularity theory for minimizers of functionals from the Calculus of Variations, having an {\it orthotropic} structure, in the {\it nonstandard growth} case. This may be seen as a follow up of our previous papers \cite{BB2} and \cite{BBLV}.
\par
Specifically, for an open set $\Omega\subseteq\mathbb{R}^N$ and a set of exponents $1<p_1\le \dots\le p_N$, we take the anisotropic Sobolev space $W^{1,\mathbf{p}}_{\rm{loc}}(\Omega)$, defined by
$$
W^{1,\mathbf{p}}_{\rm{loc}}(\Omega)=\Big\{u\in L^1_{\rm{loc}}(\Omega)\, :\,  u_{x_i}\in L_{\rm{loc}}^{p_i}(\Omega),\, i=1,\cdots,N\Big\}.
$$
Given a function $f\in L^1_{\rm loc}(\Omega)$, we consider the following functional
\[
\mathfrak{F}_{\mathbf{p}}(u,\Omega'):=\sum_{i=1}^N \frac{1}{p_i}\,\int_{\Omega'} |u_{x_i}|^{p_i}\,dx-\int_{\Omega'} f\,u\,dx,\qquad \mbox{ for } u\in W^{1,\mathbf{p}}_{\rm loc}(\Omega)\cap L^\infty_{\rm loc}(\Omega)\ \mbox{ and } \Omega'\Subset \Omega.
\]
In the superquadratic case, i.e. for  
\[
2\le p_1\le\cdots \le p_N<\infty,
\]
and for $f\equiv 0$, it has been recently proved in \cite{BB2} that any local minimizer $U\in W^{1,\mathbf{p}}_{\rm loc}(\Omega)\cap L^\infty(\Omega)$ is such that
\[
\nabla U\in L^\infty_{\rm loc}(\Omega)\qquad \mbox{ and }\qquad |U_{x_i}|^\frac{p_i-2}{2}\,U_{x_i}\in W^{1,2}_{\rm loc}(\Omega),\ \mbox{ for } i=1,\dots,N.
\]
The main goal of this paper is to address the same kind of regularity issues, again for {\it bounded} local minimizers, this time in the subquadratic case
\[
1<p_1\le\cdots \le p_N\le 2.
\]
However, we will obtain some regularity results which actually hold in the full range $1<p_1\le \dots \le p_N<\infty$, see the next section for more details.
\par
We recall that $u\in W^{1,\mathbf{p}}_{\rm loc}(\Omega)\cap L^\infty_{\rm loc}(\Omega)$ is a {\it local minimizer of $\mathfrak{F}_{\mathbf{p}}$ in $\Omega$} if
\[
\mathfrak{F}_{\mathbf{p}}(u, \Omega') \le \mathfrak{F}_{\mathbf{p}}(\varphi, \Omega'),\qquad \hbox{ for every } \varphi-u\in W^{1,\mathbf{p}}_0(\Omega')\cap L^\infty(\Omega') \mbox{ and every }\Omega'\Subset\Omega.
\]
Here we denote by $W^{1,\mathbf{p}}_0(\Omega')$ the completion of $C^\infty_0(\Omega')$ with respect to the norm
\[
\varphi\mapsto \sum_{i=1}^N \|\varphi_{x_i}\|_{L^{p_i}(\Omega')},\qquad \mbox{ for every } \varphi\in C^\infty_0(\Omega').
\]
By convexity of $\mathfrak{F}_p$, we have that $u$ is a local minimizer  if and only if it is a local weak solution in $W^{1,\mathbf{p}}_{\rm loc}(\Omega)\cap L^\infty_{\rm loc}(\Omega)$ of the quasilinear equation
\[
-\sum_{i=1}^N \Big(|u_{x_i}|^{p_i-2}\,u_{x_i}\Big)_{x_i}=f,\qquad \mbox{ in }\Omega. 
\]
This can be seen as a particular instance of elliptic equation in the wide context of Musielak-Orlicz spaces, see \cite{CGSW} for a comprehensive study on the subject.
\par
We emphasize the fact that in this paper we just consider \emph{bounded} minimizers \(u\). As a consequence, we discard a priori all the counterexamples to regularity arising in the literature related to nonstandard growth variational problems, see \cite{Gia, Ho, MaContro}. For completeness, we mention that the boundedness of minimizers in this setting has already been extensively studied, see \cite{FS} for the homogeneous case \(f\equiv 0\) and \cite{Ci, Ci2} for the non-homogeneous one. 
 We also ignore the problem of the \emph{existence} of a  minimizer in \(W^{1, \mathbf{p}}\), for which we would also need to assume that \(f\) belongs to a suitable dual Sobolev space. Here instead, we assume {\it a priori} to have a bounded minimizer $U$ and focus on identifying sharp conditions on the function \(f\) needed to obtain its Lipschitz continuity and higher differentiability. 
 \par
The main feature of all our regularity results will be that {\it we do not need to impose any restriction on the ratio} $p_N/p_1$.
\vskip.2cm\noindent
We refer the reader to our previous papers \cite{BB2, BBJ, BBLV} for an introduction to the realm of gradient regularity for minimizers of orthotropic functionals (see also \cite{De} for an approach based on viscosity methods). We just recall here that already for the standard growth case $p_1=p_N=p$, the superquadratic case $p> 2$ is much more involved than the case of the model functional
\[
\mathfrak{L}(u;\Omega')=\int_{\Omega'} |\nabla u|^p\,dx,\qquad \mbox{ for } u\in W^{1,p}_{\rm loc}(\Omega)\ \mbox{ and } \Omega'\Subset \Omega,
\]
as far as the regularity of the gradient of local minimizers is concerned.
On the other hand, the subquadratic case $1<p<2$ is simpler, in a sense: the Lipschitz continuity is a consequence of a general result due to Fonseca and Fusco \cite[Theorem 2.2]{FF}, as observed in the introduction of \cite{BBJ}.  
\par
In particular, it seems natural to try to adapt the techniques used in \cite{FF} since they allow to establish the  Lipschitz regularity for the subquadratic case when $p_1=p_N=p<2$. 
However, we stress that in the case $p_1\not= p_N$, our functional pertains to the class of variational problems with nonstandard growth conditions, following the terminology of Marcellini in \cite{Ma89, Ma91}. Then it couples in a nontrivial way the difficulties coming from the two situations: orthotropic structure and nonstandard growth conditions. Thus, even if we will prove Lipschitz regularity with a proof inspired from \cite[Theorem 2.2]{FF}, nontrivial adaptations and intermediate results will be needed.
\par
Finally, it is worth recalling that, in spite of a large number of papers and contributions on nonstandard growth problems (including for example \cite{BL,BFZ, CDLL2,CDLL, ELM1, ELM, Ko, Kor, Leo, Pa, UU}), a complete gradient
regularity theory is still missing, even for the case of orthotropic structures. Moreover, we recall that also
the case of basic regularity (i.e. $C^{0,\alpha}$ estimates, Harnack inequalities, and an extension of the De Giorgi's regularity theory) is still not fully 
well-understood (see for example \cite{BCSV, BDP} and \cite{LS} for some results) for local minimizers of $\mathfrak{F}_{\mathbf{p}}$. 
\par
\subsection{Main results}

Our first result  is an higher integrability statement, which is valid without any restriction on the exponents $p_i$. As we will see in a while, this will be instrumental to the two main regularity results of this paper. In what follows, we  use the  notation
\begin{equation}
\label{G0}
\mathcal{G}_0(\nabla u)=\left(\sum_{i=1}^{N}\frac{1}{p_i}\,|u_{x_i}|^{p_i}-1\right)_++1,
\end{equation}
where $(\,\cdot\,)_+$ stands for the positive part. This function naturally arises from the  principal part of $\mathfrak{F}_{\mathbf{p}}$. 
It encodes in a natural way the full summability informations for each component of the gradient. A similar idea has been considered for example in the papers \cite{BCM, CM2, CM}, dealing with the so-called {\it double phase problems}.
\begin{prop}[Higher integrability: general growth]
\label{prop:high}
Let $1<p_1\le \dots \le p_N<\infty$ and let $f\in L^\gamma_{\rm loc}(\Omega)$, for some $\gamma\ge 2$. 
Then for every local minimizer $U\in W^{1,\mathbf{p}}_{\rm loc}(\Omega)\cap L^\infty_{\rm loc}(\Omega)$, we have
\[
\mathcal{G}_0(\nabla U)\in L^\gamma_{\rm loc}(\Omega).
\]
Moreover, for every ball $B_R(x_0)\Subset\Omega$ such that $B_{4R}(x_0)\Subset \Omega$, as well, we have
\begin{equation}
\label{apriori1}
\int_{B_\frac{R}{2}(x_0)}\mathcal{G}_0(\nabla U)^\gamma\,dx\le \left(\Gamma_1+\Gamma_2\,\int_{B_R(x_0)} \mathcal{G}_0(\nabla U)\,dx\right), 
\end{equation}
for two constants $\Gamma_1,\Gamma_2>0$ which depend only on 
\[
N,\, p_N,\, p_1,\,\gamma,\,R,\,\|f\|_{L^\gamma(B_{4R}(x_0))} \mbox{ and } \|U\|_{L^\infty(B_{4R}(x_0))}.
\]
\end{prop}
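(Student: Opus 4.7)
The plan is to derive \eqref{apriori1} through a Moser-type iteration in three stages. First, I would regularize the problem---smoothing each density $|t|^{p_i}/p_i$ into a uniformly convex $C^{2}$ integrand and mollifying $f$---so that the minimizers become smooth and all pointwise manipulations are justified, with bounds uniform in the regularization parameter $\varepsilon$ and eventually passed to the limit. Second, I would derive a weighted second-order Caccioppoli inequality for $\mathcal{G}_0(\nabla U)$. Third, I would iterate this inequality via Sobolev embedding to boost integrability from the free bound $\mathcal{G}_0 \in L^{1}_{\rm loc}$ (immediate from $U \in W^{1,\mathbf{p}}_{\rm loc}$) up to the target $L^{\gamma}_{\rm loc}$.

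The natural first guess would be to test the Euler--Lagrange equation $-\sum_{i}(|U_{x_i}|^{p_i-2}U_{x_i})_{x_i}=f$ with a nonlinear test function of the form $\varphi=(M-U)\,\eta^{2m}\,\Psi_{T}(\mathcal{G}_0(\nabla U))$, where $M=2\|U\|_{L^{\infty}(B_{4R})}$ (so $M-U>0$), $\eta$ is a smooth cutoff supported in $B_R$ and equal to $1$ on $B_{R/2}$, and $\Psi_{T}(t)=\min\{t,T\}^{\gamma-1}$ is a bounded truncation of $t^{\gamma-1}$ (to be removed by $T\to\infty$ at the end). However, a direct computation---using $(\mathcal{G}_0)_{x_i}=\chi_{\{H>0\}}\sum_{j}A_j(\nabla U)\,U_{x_i x_j}$, where $A_j(\xi)=|\xi_j|^{p_j-2}\xi_j$ and $H=\sum_i|U_{x_i}|^{p_i}/p_i-1$, combined with the PDE $\sum_j(A_j)_{x_j}=-f$---shows that all contributions in the integration by parts cancel and the resulting identity reduces to a tautology. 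This is the main obstacle to circumvent: the factor $\Psi_{T}(\mathcal{G}_0)$ is too compatible with the structure of the PDE and gives no new information on its own.

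To break the symmetry, I would instead differentiate the equation in each direction $x_s$, obtaining
\[
-\sum_{j}\bigl((p_j-1)|U_{x_j}|^{p_j-2}U_{x_jx_s}\bigr)_{x_j}=f_{x_s},
\]
and test the resulting identity against $\eta^{2m}\,\Psi_{T}(\mathcal{G}_0)\,U_{x_s}$, summing over $s$. The leading term is a genuinely coercive Hessian-weighted energy $\sum_{j,s}\int(p_j-1)|U_{x_j}|^{p_j-2}|U_{x_jx_s}|^{2}\eta^{2m}\Psi_{T}(\mathcal{G}_0)$; the right-hand side is treated by moving $\partial_{x_s}$ back onto the test function, so that the $f$-contributions are bounded via H\"older's and Young's inequalities against the coercive term (using $f\in L^{\gamma}$ and $U\in L^{\infty}$). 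Lower-order cutoff and nonlinear errors coming from $\nabla\eta$ and $\Psi_T'(\mathcal{G}_0)(\mathcal{G}_0)_{x_i}$ are absorbed analogously. The weighted Hessian information then translates---somewhat delicately, since $|U_{x_j}|^{p_j-2}$ behaves very differently in the sub- and super-quadratic regimes---into an $L^{2}$-estimate of $\nabla(\eta^{m}\mathcal{G}_0^{\gamma/2})$, controlled by $\int\mathcal{G}_0^{\gamma}|\nabla\eta|^{2}$ plus data-dependent constants. Sobolev's inequality then yields an iteration step of the form $\mathcal{G}_0\in L^{\alpha}\Rightarrow\mathcal{G}_0\in L^{\alpha\cdot 2^{*}/2}$, and starting from $\alpha_0=1$ on a sequence of nested balls $B_{R/2}\subset\dots\subset B_R$ any $\gamma\ge 2$ is reached in finitely many steps. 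Removing the truncation ($T\to\infty$) by monotone convergence and the regularization ($\varepsilon\to 0$) by lower semicontinuity then yields \eqref{apriori1} with the stated dependence of the constants.
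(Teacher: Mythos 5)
Your overall architecture (regularize, differentiate the Euler--Lagrange equation, Caccioppoli estimate via the Naumann integration-by-parts trick, then Moser) is the same as the paper's, but the two concrete steps at the heart of the proposal do not go through as stated, and the most important idea of the paper's proof is missing.

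The test function $\eta^{2m}\Psi_T(\mathcal{G}_0)\,U_{x_s}$ does not match the orthotropic structure. After summing over $s$, the cross term produced by $\Psi_T'(\mathcal{G}_0)(\mathcal{G}_0)_{x_j}$ is
\[
\frac12\sum_j\int g''_j(U_{x_j})\,\Psi_T'(\mathcal{G}_0)\,(\mathcal{G}_0)_{x_j}\,\bigl(|\nabla U|^2\bigr)_{x_j}\,\eta^{2m},
\]
which has no sign: in the orthotropic setting $|\nabla U|^2$ and $\mathcal{G}_0$ are unrelated functions of $\nabla U$, so the two gradient factors do not pair into a square. Young's inequality against the coercive Hessian term produces error weights of the type $g''_j(U_{x_j})\,U_{x_l}^2$ attached to $|U_{x_lx_j}|^2$, which cannot be compared with the coercive weight $g''_l(U_{x_l})$; this term cannot be absorbed, so your proposal does not yield a usable Caccioppoli inequality. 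The paper avoids the difficulty by replacing $U_{x_s}$ with the ``momentum'' $g'_{s,\varepsilon}(u^\varepsilon_{x_s})$ in the test function (Proposition~\ref{prop:caccioespilon2}): then $\sum_s g'_{s,\varepsilon}(u^\varepsilon_{x_s})\,u^\varepsilon_{x_sx_j}=(G_\varepsilon)_{x_j}$ and the cross term collapses to $\sum_j\int g''_j\,F'(G_\varepsilon)|(G_\varepsilon)_{x_j}|^2\,\eta^2\ge0$, joining the coercive part instead of being an error. This is precisely the ``careful choice of test function reflecting the algebraic structure of the operator'' the paper flags.

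Even granting a Caccioppoli estimate, the proposed multiplicative iteration $\mathcal{G}_0\in L^\alpha\Rightarrow\mathcal{G}_0\in L^{\alpha\cdot 2^*/2}$ cannot prove Proposition~\ref{prop:high}, which covers general growth $1<p_1\le\dots\le p_N<\infty$. Converting the $g''_i$-weighted Hessian energy into an unweighted $\|\nabla\mathcal{G}_\varepsilon^{\beta/2}\|_{L^2}^2$ requires $g''_i(u^\varepsilon_{x_i})\gtrsim\mathcal{G}_\varepsilon^{(p_1-2)/p_1}$, which uses $p_i\le2$ in an essential way: when $p_i>2$ the weight $g''_i$ vanishes where $u^\varepsilon_{x_i}$ does and admits no lower bound by any power of $\mathcal{G}_\varepsilon$. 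Moreover, even for $p_N\le2$ the inequality \eqref{caccioppola} presents $\mathcal{G}_\varepsilon^{\alpha+2-2/p_N}$ on the right against $\mathcal{G}_\varepsilon^{\alpha+2-2/p_1}$ on the left before Sobolev, a loss of $2(1/p_1-1/p_N)\ge0$ per step; Sobolev's gain overcomes it only above an explicit threshold in $\alpha$ depending on $p_N/p_1$, so the iteration does not bootstrap from $\alpha_0=1$. The paper instead proves Proposition~\ref{prop:high} via the \emph{slow} Moser iteration of Proposition~\ref{prop:slow}: it writes $\int\mathcal{G}_\varepsilon^{\beta+1}\eta^2$ in terms of $\int\mathcal{G}_\varepsilon^\beta\sum_k g'_{k,\varepsilon}(u^\varepsilon_{x_k})\,u^\varepsilon_{x_k}\,\eta^2$, integrates by parts against the equation \eqref{nondiff} while exploiting the uniform $L^\infty$ bound on $u^\varepsilon$, and then feeds in the Caccioppoli inequality. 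The outcome is the additive gain $\mathcal{G}_\varepsilon\in L^{\vartheta}\Rightarrow\mathcal{G}_\varepsilon\in L^{\vartheta+2/p_N}$, weaker per step than Sobolev but valid for all $p_N$ and iterable from $L^1$ to every finite exponent. This slow iteration, not the Sobolev boost, is the missing idea, and it is what makes the proposition hold without any restriction on the $p_i$.
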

\begin{teoL}[Lipschitz regularity: subquadratic growth]
Let $1<p_1\le \dots \le p_N\le 2$ and let $f\in L^\gamma_{\rm loc}(\Omega)$, for some $\gamma>N$.
Then every local minimizer $U\in W^{1,\mathbf{p}}_{\rm loc}(\Omega)\cap L^\infty_{\rm loc}(\Omega)$ is locally Lipschitz continuous.
Moreover, for every ball \(B_R(x_0) \Subset \Omega\) such that $B_{4R}(x_0)\Subset\Omega$, we have
\[
\begin{split}
\left\|\mathcal{G}_0(\nabla U)\right\|_{L^{\infty}(B_{R/4}(x_0))}
 &\le C\,\left[\frac{1}{R^\frac{N\,\gamma}{\gamma-N}}\,\left(\Gamma_1+\Gamma_2\,\int_{B_R(x_0)} \mathcal{G}_0(\nabla U)\,dx\right)^\frac{N}{\gamma-N}+\|f\|_{L^\gamma(B_{R}(x_0))}^\frac{N\,\gamma}{\gamma-N}\right]\\
 &\times\left\|\mathcal{G}_0(\nabla U)\right\|_{L^1(B_{R}(x_0))}.
 \end{split}
 \]
for some $C = C(N,p_N,p_1,\gamma) > 0$ and for the same constants $\Gamma_1,\Gamma_2$ as in \eqref{apriori1}.
\end{teoL}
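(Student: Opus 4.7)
\medskip

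\textbf{Proof plan for Theorem L.}
The strategy is a Moser iteration on $\mathcal{G}_0(\nabla U)$ in which Proposition~\ref{prop:high} provides the required initial integrability. Since the equation is singular (each coefficient $|u_{x_i}|^{p_i-2}$ blows up where $u_{x_i}=0$) and anisotropic, I would first work with a regularized problem: replace each density $|z|^{p_i}/p_i$ by a smooth, uniformly convex approximation $F_{i,\varepsilon}$ with quadratic growth at infinity (e.g. adding $\varepsilon\,|z|^2$ and smoothing near $0$), let $u_\varepsilon$ be the minimizer with boundary datum $U$ on a fixed ball, and derive estimates that depend only on the quantities appearing in \eqref{apriori1}; the statement for $U$ will follow by lower semicontinuity after passing to the limit $\varepsilon\to 0$.

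For the regularized minimizer, which is smooth enough to be differentiated, I would differentiate the Euler--Lagrange equation in the direction $x_k$, test against $\eta^{2\beta}\,\mathcal{G}_{0,\varepsilon}(\nabla u_\varepsilon)^{s}\,u_{\varepsilon,x_k}$ for suitable cut-off $\eta$ and exponents $s,\beta\ge 0$, and sum over $k$. Combining the resulting identities with the pointwise inequality
\[
\sum_{k,i}|u_{\varepsilon,x_i}|^{p_i-2}\,|u_{\varepsilon,x_i x_k}|^2\;\ge\;c\,|\nabla \mathcal{G}_{0,\varepsilon}(\nabla u_\varepsilon)^{\frac{s+1}{2}}|^{2}
\qquad\text{(with $c$ independent of $\varepsilon$)}
\]
and integrating by parts the $f$-term via $\int f\, \mathcal{G}_0^{s} u_{x_k}\, \eta^{2\beta}$ (which after one integration by parts reduces to a term linear in $\nabla\mathcal{G}_0^{(s+1)/2}$ that can be absorbed, plus a purely $f$-dependent remainder), I expect to obtain a Caccioppoli-type inequality of the form
\[
\int \eta^{2\beta}\,\bigl|\nabla \mathcal{G}_0^{\frac{s+1}{2}}\bigr|^2 \,dx
\;\le\; C(s+1)^{2}\!\int |\nabla\eta|^{2}\,\mathcal{G}_0^{s+1}\,dx
+C\!\int \eta^{2\beta}\,\bigl(|f|^{2}+|f|^{\gamma}\bigr)\,\mathcal{G}_0^{s}\,dx.
\]
Coupling this with the standard $L^{2^{*}}$--$W^{1,2}$ Sobolev embedding and H\"older's inequality (to handle the $|f|^{\gamma}\,\mathcal{G}_0^{s}$ term with $\gamma>N$) would yield a reverse H\"older inequality
\[
\|\mathcal{G}_0\|_{L^{\chi q}(B_{r})}\le \bigl[C\,(s+1)^{\kappa}\,(R-r)^{-\kappa}\,\mathcal{A}\bigr]^{1/q}\,\|\mathcal{G}_0\|_{L^{q}(B_{R})},
\qquad q=s+1,
\]
with $\chi=\chi(N,\gamma)>1$ and $\mathcal{A}=1+\|f\|_{L^{\gamma}}^{\gamma/(\gamma-N)}$ (plus scale factors in $R$).

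At this point the classical Moser iteration with exponents $q_{j}=\chi^{j}\,q_{0}$ produces, after summation of the geometric series, an estimate
\[
\|\mathcal{G}_0(\nabla U)\|_{L^{\infty}(B_{\rho/2})}
\;\le\; C\,\bigl(\rho^{-\kappa}\,\mathcal{A}\bigr)^{N/(\gamma-N)}\,\|\mathcal{G}_0(\nabla U)\|_{L^{q_{0}}(B_{\rho})},
\]
which I intend to start at the exponent $q_{0}=\gamma$ made available by Proposition~\ref{prop:high}. Substituting \eqref{apriori1} on the right-hand side produces the prefactor $\bigl(\Gamma_{1}+\Gamma_{2}\int\mathcal{G}_0\bigr)^{N/(\gamma-N)}$. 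Finally, to reach the $L^{1}$--factor $\|\mathcal{G}_0\|_{L^{1}(B_{R})}$ displayed in the statement I would use the standard interpolation trick of Giaquinta--Giusti (absorbing via a family of shrinking balls $B_{\rho}\Subset B_{R/2}$ and Young's inequality on $\|\mathcal{G}_0\|_{L^{q_0}}$), which converts the $L^{q_{0}}$ bound into a $L^{1}$ bound at the cost of an extra power on the prefactor.

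The delicate step is the derivation of the Caccioppoli inequality: in the subquadratic anisotropic regime one must be careful that the test function $\mathcal{G}_0^{s}\,u_{x_k}$ is admissible (this uses the higher differentiability $|u_{\varepsilon,x_i}|^{(p_i-2)/2}u_{\varepsilon,x_i}\in W^{1,2}_{\rm loc}$, an intermediate result I expect the paper to establish), and that the mixed terms arising from differentiating in one direction and testing with a quantity involving all directions satisfy the right sign/absorption structure. The Fonseca--Fusco argument \cite{FF} takes care of this when $p_1=p_N$; the main novelty here is keeping the estimate uniform in the ratio $p_N/p_1$, which forces one to encode the anisotropy through $\mathcal{G}_0$ rather than through $|\nabla u|$.
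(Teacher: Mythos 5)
Your overall architecture matches the paper's: regularize, derive a Caccioppoli inequality for powers of $\mathcal{G}_\varepsilon(\nabla u^\varepsilon)$ from the differentiated Euler--Lagrange equation using Naumann's integration-by-parts trick, exploit the subquadratic structure $p_i\le 2$ to trade the weights $g''_{i,\varepsilon}(u^\varepsilon_{x_i})$ for a power of $\mathcal{G}_\varepsilon$, run a Moser iteration, rectify to an $L^\infty$--$L^1$ bound by interpolation, and feed in Proposition~\ref{prop:high} as the starting integrability. The passage to the limit at the end is also standard. Up to here, you have correctly reconstructed the skeleton.

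The genuine gap is in your choice of test function. You propose to test the differentiated equation \eqref{Eulerdiff} with $\eta^{2\beta}\,\mathcal{G}_\varepsilon(\nabla u^\varepsilon)^s\,u^\varepsilon_{x_k}$ and sum over $k$. The paper instead tests with $F(G_\varepsilon(\nabla u^\varepsilon))\,g'_{k,\varepsilon}(u^\varepsilon_{x_k})\,\eta^2$ (so roughly $\mathcal{G}_\varepsilon^\alpha\,|u^\varepsilon_{x_k}|^{p_k-2}u^\varepsilon_{x_k}\,\eta^2$ rather than $\mathcal{G}_\varepsilon^s\,u^\varepsilon_{x_k}\,\eta^{2\beta}$). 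The difference is not cosmetic: with the paper's choice, after summing over $k$ the leading term telescopes because $\sum_k g'_{k,\varepsilon}(u^\varepsilon_{x_k})\,u^\varepsilon_{x_kx_i} = (G_\varepsilon(\nabla u^\varepsilon))_{x_i}$, so one arrives at
\[
\sum_{i} \int g_{i,\varepsilon}''(u^\varepsilon_{x_i})\,F'(G_\varepsilon(\nabla u^\varepsilon))\,\big|(G_\varepsilon(\nabla u^\varepsilon))_{x_i}\big|^2\,\eta^2\,dx,
\]
which is manifestly nonnegative and produces exactly the weighted gradient term in \eqref{caccioppola}. With your choice, the corresponding contraction gives $\sum_k u^\varepsilon_{x_k}\,u^\varepsilon_{x_kx_i}=\tfrac12(|\nabla u^\varepsilon|^2)_{x_i}$, and the resulting cross term
\[
\frac{s}{2}\sum_i\int g_{i,\varepsilon}''(u^\varepsilon_{x_i})\,\big(|\nabla u^\varepsilon|^2\big)_{x_i}\,\mathcal{G}_\varepsilon^{s-1}\,(\mathcal{G}_\varepsilon)_{x_i}\,\eta^{2\beta}\,dx
\]
has no sign in the anisotropic setting, since $\mathcal{G}_\varepsilon$ is \emph{not} a radial function of $\nabla u^\varepsilon$ when $p_1\neq p_N$. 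This is precisely the issue you flag as ``delicate'' at the end of your plan, but your proposed test function does not resolve it; the paper's choice is what the authors call the one ``reflecting the algebraic structure of the operator,'' and without it the Caccioppoli inequality with no weights on the right-hand side (the key structural feature of \eqref{caccioppola}) does not come out. You would also need to correct the displayed pointwise inequality: the left-hand side needs a factor $\mathcal{G}_\varepsilon^{\alpha}$, and the inequality is really an absorption of $g''_{i,\varepsilon}\gtrsim \mathcal{G}_\varepsilon^{(p_1-2)/p_1}$ into a power of $\mathcal{G}_\varepsilon$, as in the passage from \eqref{moser1} to \eqref{moser2}. A smaller point: admissibility of the test functions is not obtained from Theorem~S as you suggest; the regularized minimizers $u^\varepsilon$ are $C^\infty(\overline B)$ by Lemma~\ref{lm:below}, so all computations are classical at that level, and Theorem~S is an independent result rather than a lemma feeding into Theorem~L.
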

Observe that the assumption \(\gamma>N\) is sharp (in the scale of Lebesgue spaces) to obtain the Lipschitz continuity of \(U\). Actually, this is already true when \(p_1=\dots=p_N=2\). It is a remarkable fact that, even in the orthotropic case with nonstandard growth conditions, this universal assumption on $f$ still leads to Lipschitz continuity. We refer the reader to \cite{BM} for a wide class of variational problems (not including orthotropic structures, however) where this same condition is known to guarantee Lipschitz continuity of local minimizers. 
\begin{teoS}[Sobolev regularity: subquadratic growth]
Let $1<p_1\le \dots \le p_N\le 2$ and let 
\begin{equation}
\label{gammadif}
f\in L^{1+\frac{2}{p_1}}_{\rm loc}(\Omega).
\end{equation}
Then for every local minimizer $U\in W^{1,\mathbf{p}}_{\rm loc}(\Omega)\cap L^\infty_{\rm loc}(\Omega)$ we have
\[
\mathcal{W}_i:=|U_{x_i}|^\frac{p_i-2}{2}\,U_{x_i}\in W^{1,2}_{\rm loc}(\Omega),\ \mbox{ for } i=1,\dots,N \qquad \mbox{ and }\qquad \nabla U\in W^{1,\mathbf{p}}_{\rm loc}(\Omega).
\]
Moreover, for every ball \(B_R(x_0) \Subset \Omega\) such that $B_{4R}(x_0)\Subset\Omega$, we have for every $i=1,\dots,N$
\[
\begin{split}
\sum_{i=1}^N \int_{B_\frac{R}{4}(x_0)} \Big|\nabla \mathcal{W}_i\Big|^2 \,dx &\leq  \frac{C}{R^2}\,\left(\Gamma_1+\Gamma_2\,\int_{B_R(x_0)} \mathcal{G}_0(\nabla U)\,dx\right)+C\,R^{\frac{2}{p_1}-1}\,\int_{B_R(x_0)}|f|^{1+\frac{2}{p_1}}\,dx,
\end{split}
\]
and 
\[
\|\nabla U_{x_i}\|_{L^{p_i}(B_{R/4}(x_0))}\le \frac{2}{p_i}\,\|U_{x_i}\|_{L^{p_i}(B_{R/4}(x_0))}^{\frac{2-p_i}{2}}\, \|\nabla \mathcal{W}_i\|_{L^2(B_{R/4}(x_0))},
\]
for some $C=C(N,p_N,p_1)>0$ and for the same constants $\Gamma_1,\Gamma_2$ as in \eqref{apriori1}, corresponding to the choice $\gamma=(p_1+2)/p_1$.
\end{teoS}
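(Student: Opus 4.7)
The plan is to derive a Caccioppoli-type inequality for $\nabla \mathcal{W}_i$ and then close it using Proposition~\ref{prop:high} applied with the exponent $\gamma = 1 + 2/p_1$ (admissible since $p_1 \le 2$ gives $\gamma \ge 2$). Starting from the Euler--Lagrange equation $-\sum_i (|U_{x_i}|^{p_i-2} U_{x_i})_{x_i} = f$, I differentiate formally in direction $x_j$ and test with $\eta^2 U_{x_j}$ for a smooth cutoff $\eta$ supported in $B_{R/2}$ and equal to $1$ on $B_{R/4}$. Summed over $j$, the left-hand side reads $\sum_{i,j}(p_i-1)\int |U_{x_i}|^{p_i-2} U_{x_i x_j}^2\eta^2\,dx = \sum_i \frac{4(p_i-1)}{p_i^2}\int |\nabla\mathcal{W}_i|^2\eta^2\,dx$, thanks to the pointwise identity $|\partial_{x_j}\mathcal{W}_i|^2 = \frac{p_i^2}{4}|U_{x_i}|^{p_i-2} U_{x_i x_j}^2$. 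Since $f\in L^{1+2/p_1}$ is not smooth and $U$ is not yet known to be twice weakly differentiable, this formal calculus must be rigorously implemented via finite difference quotients in $x_j$ (together with an approximation of the datum and, if needed, a non-degenerate regularization of the nonlinearity), with a concluding passage to the limit by Fatou/weak lower semicontinuity.

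The heart of the proof is the treatment of the cross term $\sum_{i,j}(p_i-1)\int |U_{x_i}|^{p_i-2} U_{x_i x_j}\,U_{x_j}\,\eta\eta_{x_i}\,dx$. Pairing the singular factor $|U_{x_i}|^{p_i-2}$ with $U_{x_j}^2$ via Young's inequality is harmless in the isotropic case (it reproduces $|\nabla U|^p$), but fails in the orthotropic setting for $i\ne j$, because $U_{x_j}^2$ cannot absorb the singularity of $|U_{x_i}|^{p_i-2}$ when $p_i<2$. I circumvent this through the identity $(p_i-1)|U_{x_i}|^{p_i-2}U_{x_i x_j} = \partial_{x_j}(A_i(U_{x_i}))$ with $A_i(s)=|s|^{p_i-2}s$, and integrate by parts in $x_j$: the cross term becomes $-2\sum_{i,j}\int A_i(U_{x_i})\,\partial_{x_j}(U_{x_j}\eta\eta_{x_i})\,dx$, in which $A_i(U_{x_i})$ now carries only the \emph{non-singular} exponent $p_i-1>0$. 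Expanding the inner derivative, the only piece still involving a second derivative is $\int A_i(U_{x_i}) U_{x_j x_j}\eta\eta_{x_i}\,dx$, which, using $U_{x_j x_j} = \tfrac{2}{p_j}|U_{x_j}|^{(2-p_j)/2}\partial_{x_j}\mathcal{W}_j$ and Young's inequality, splits into an absorbable $|\partial_{x_j}\mathcal{W}_j|^2\eta^2$ contribution and a non-singular remainder bounded by $|U_{x_i}|^{2(p_i-1)}|U_{x_j}|^{2-p_j}|\nabla\eta|^2$; a Hölder decomposition with exponents tailored to $p_i(1+2/p_1)$ and $p_j(1+2/p_1)$ places the latter in $L^1$ via Proposition~\ref{prop:high}.

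The datum contribution $-\sum_j \int f(\eta^2 U_{x_j})_{x_j}\,dx$ is handled analogously: the only delicate piece is $\int f\eta^2 U_{x_j x_j}\,dx$, and rewriting $U_{x_j x_j} = \tfrac{2}{p_j}|U_{x_j}|^{(2-p_j)/2}\partial_{x_j}\mathcal{W}_j$ followed by Young's inequality yields an absorbable $|\partial_{x_j}\mathcal{W}_j|^2\eta^2$ term and a residue $\int |f|^2|U_{x_j}|^{2-p_j}\eta^2\,dx$. A further Young's inequality with the conjugate exponents $\tfrac{p_1+2}{2p_1}$ and $\tfrac{p_1+2}{2-p_1}$ decomposes this residue into $|f|^{(p_1+2)/p_1}$ and $|U_{x_j}|^{(2-p_j)(p_1+2)/(2-p_1)}$; the elementary inequality $p_1\le p_j$, which rearranges to $(2-p_j)(p_1+2)/(2-p_1)\le p_j(1+2/p_1)$, guarantees that the latter is again integrable by Proposition~\ref{prop:high} with $\gamma=1+2/p_1$. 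Collecting these estimates yields the asserted bound on $\sum_i\int_{B_{R/4}}|\nabla\mathcal{W}_i|^2$. The second inequality of the statement, and the conclusion $\nabla U \in W^{1,\mathbf{p}}_{\rm loc}$, then follow immediately by applying Hölder's inequality with conjugate exponents $2/p_i$ and $2/(2-p_i)$ to the pointwise identity $|U_{x_i x_j}| = \tfrac{2}{p_i}|U_{x_i}|^{(2-p_i)/2}|\partial_{x_j}\mathcal{W}_i|$. The main obstacle remains the cross term: once the integration-by-parts trick has traded the singular exponent $p_i-2$ for the benign $p_i-1$, the rest is a careful Young-type bookkeeping, closed by the higher integrability of Proposition~\ref{prop:high}.
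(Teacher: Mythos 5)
Your proposal follows essentially the same route as the paper: testing the differentiated equation with the gradient itself, applying Naumann's integration-by-parts trick to trade the singular Hessian weight $g''_i$ for the nonsingular $g'_i$, absorbing the diagonal second-derivative residue via a $g''$-weighted Young inequality, and closing with Proposition~\ref{prop:high} at $\gamma = 1+2/p_1$. The only differences in execution are that the paper realizes the rigor through the smooth regularized minimizers $u^\varepsilon$ of Section~\ref{preli} (the approximation route you mention only as a fallback), precisely to sidestep the difference-quotient technicalities in the nonstandard-growth setting, and that it bundles the component-wise remainder terms using the auxiliary function $\mathcal{G}_\varepsilon$ rather than the explicit component-by-component H\"older bookkeeping you describe.
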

\subsection{Comparison with known results} 
In the homogeneous case \(f=0\), Proposition \ref{prop:high} can be obtained as a  consequence of \cite[Lemma 4.2]{Li}. In the superquadratic case \(p_1\geq 2\) and still for \(f\equiv0\), an alternate proof can also be found in \cite[Proposition 6.1]{BB2}. We present here a new proof that takes into account the presence of the forcing term $f$. Our argument is certainly  more elementary than the one in \cite{BB2}, and arguably more natural in our setting than the one in \cite{Li}, in the sense that it strongly relies on some tools that will be repeatedly used in the proofs of the other main results, see the next section for further comments.
\vskip.2cm\noindent
Theorem L is the counterpart for the subquadratic case of our previous result \cite[Theorem 1.1]{BB2}, which deals with the superquadratic case. We shall explain in the next section why the two situations require different arguments.
We point out that experts in the field may recognize Theorem L (and \cite[Theorem 1.1]{BB2}, as well) as a particular case of the main result in \cite{Li}, at least in the homogeneous case \(f\equiv 0\). However, it turns out that the proof \cite[Proposition 2.1]{Li} is affected by a crucial flaw, we refer the reader to \cite[Remark 1.4]{BB2} for a detailed discussion on this delicate point. In any case, it is fair to admit that some other parts of Lieberman's paper \cite{Li} have been an important source of inspiration for the proof of  Proposition \ref{prop:high}.   
\par
In \cite[Corollary 3.4]{EMMP}, the authors proves the local Lipschitz continuity of local minimizers (not a priori bounded) of the following functional 
\begin{equation}
\label{EMMP}
\sum_{i=1}^N \frac{1}{p_i}\,\int (1+|u_{x_i}|^2)^{\frac{p_i}{2}}\,dx,\qquad \mbox{ for } \frac{2\,N}{N+2}<p_1\le \dots\le p_N\le 2.
\end{equation}
Observe that such a functional has an orthotropic structure, with nonstandard subquadratic growth conditions, exactly as our $\mathfrak{F}_p$. However, it should be noticed that the functional \eqref{EMMP} is {\it neither degenerate nor singular\,}: this is the crucial difference with our case.
Indeed, the Hessian of the function 
\[
\widetilde{G}(z):=\sum_{i=1}^N \frac{1}{p_i} (1+z_{i}^2)^{\frac{p_i}{2}},\qquad \mbox{ for } z=(z_1,\dots,z_N)\in\mathbb{R}^N,
\] 
satisfies
\[
(p_1-1)(1+|z|^2)^{\frac{p_1-2}{2}}\,|\xi|^2\leq \langle D^2 \widetilde{G} (z)\,\xi,\xi\rangle\leq |\xi|^2,\qquad \mbox{ for } z,\xi\in\mathbb{R}^N.
\]
This  property fails to be satisfied by our functional, where the integrand is given by 
\begin{equation}
\label{Gintro}
G(z)=\sum_{i=1}^N \frac{1}{p_i} |z_{i}|^{p_i},\qquad \mbox{ for } z\in\mathbb{R}^N.
\end{equation}
Even worse, in contrast with the general framework of \cite{EMMP}, in our situation there is no continuous functions \(h_1,h_2 : [0,+\infty) \to [0,+\infty)\) such that
\[
h_1(|z|)\,|\xi|^2\le \langle D^2 G (z)\,\xi,\xi\rangle \leq h_2(|z|)\,|\xi|^2,\qquad \mbox{ for }z,\xi\in\mathbb{R}^N,
\]
even for large values of \(z\). Indeed, $D^2 G(z)$ is given by the diagonal matrix
\[
D^2 G(z)= \left[\begin{array}{ccc}
(p_1-1)\,|z_1|^{p_1-2}& &\\
&\ddots&\\
& & (p_N-1)\,|z_N|^{p_N-2}
\end{array}\right],
\]
and each entry on the diagonal blows-up as the corresponding component of $z$ vanishes.
\vskip.2cm\noindent
As for Theorem S, we observe that this may be seen as a generalization of the following classical result for the $p-$Laplacian: for $1<p<2$, any local weak solution \(u\in W^{1, p}_{\rm loc}(\Omega)\) of 
$$
-\hbox{div}(|\nabla u|^{p-2}\nabla u) = f,\qquad \mbox{ in }\Omega,
$$
belong to \(W^{2,p}_{\rm loc}(\Omega)\), provided \(f\in L^{p'}_{\rm loc}(\Omega)\) (see for example \cite{Det}). The reader may notice that for $p_1=p_N=p$ our assumption \eqref{gammadif} boils down to
\[
f\in L^\gamma_{\rm loc}(\Omega),\qquad \mbox{ for } \gamma\ge 1+\frac{2}{p}.
\]
Since for $1<p<2$ we have $1+2/p<p'$, this is a weaker requirement when compared with the classical result recalled above. This is not surprising, since we are now assuming that $u$ is a priori bounded. Such an assumption is responsible for this new feature. In the standard growth case, this has been recently observed in \cite[Theorem 1.2]{CGP}.
\par
Higher differentiability of local minimizers is a well-studied problem: for the specific case of orthotropic functionals with subquadratic nonstandard growth, some prior results can be found for example in \cite[Theorem 3]{BL}, \cite[Corollary 1]{BL2} and \cite[Theorem 2]{CDLL}.
\par
Finally, in the superquadratic case $p_1\ge 2$, as already recalled the counterpart of Theorem S has been obtained in \cite[Corollary 7.1]{BB2}, for $f\equiv 0$. In the case of a right-hand side $f\not\equiv 0$, some results have been obtained in \cite[Theorem 1.1]{BLPV} and \cite[Corollary 2]{Pa}.

\begin{oss}[On the \(C^1\) regularity]
In dimension $N=2$, the $C^1$ regularity of a {\it Lipschitz} local minimizer essentially follows from \cite[Theorem 1.1]{DeSa}, both in the case \(p_1\leq 2\) and  \(p_1\geq 2\), provided that \(f\equiv 0\). This assertion is detailed in \cite{Bou}, where the ``mixed'' case \(p_1\leq 2\leq p_2\) is considered, as well. For a slightly different approach, see \cite{LR} when \(p_1\geq 2\) and   \cite{Ric} when \(p_1=p_2< 2\): these references still deal with the case $f\equiv 0$. In the non-homogenous case, the strategy followed in \cite{BB}  (and originally written for \(p_1=p_2\) and \(f\equiv 0\)) could be adapted to more general situations, provided \(f\) satisfies suitable differentiability and summability conditions. In any case, the \(C^1\) regularity of local minimizers when \(N\geq 3\) is entirely open, even for $p_1=p_N$ and \(f\equiv 0\).
\end{oss}

\subsection{Structure of the proofs}

The proofs of Proposition \ref{prop:high}, Theorem L and Theorem S are based on a classical three steps strategy. 
We first approximate our local minimizer $U\in W^{1,\mathbf{p}}_{\rm loc}(\Omega)\cap L^\infty_{\rm loc}(\Omega)$ by a sequence of minimizers $\{u^\varepsilon\}_{\varepsilon>0}$ of regularized functionals $\mathfrak{F}_{\mathbf{p},\varepsilon}$ having good smoothing properties. We next obtain uniform a priori estimates on these minimizers. 
Finally, we pass to the limit in order to transfer these a priori bounds to \(U\). 
\vskip.2cm\noindent
The first step is usually quite easy, it is sufficient to perturb the initial functional by adding some uniformly convex $\varepsilon-$perturbation and possibly smooth out the coefficients, for example by replacing $f$ with its mollifcation $f^\varepsilon$. This regularization strategy allows to avoid the usual difference quotient method, and the technicalities that go with its use in the nonstandard growth setting.
However, here we face a first difficulty: remember that we are not assuming $f$ to be in the correct dual Sobolev space. This also entails that we do not have a good a priori $L^\infty$ estimate at our disposal. Thus, such an approximation has to be handled with great care. We circumvent this technical difficulty, by adding a nonlinear lower term in the regularized functional, which forces the minimizers $u^\varepsilon$ to be bounded, with a uniform $L^\infty$ bound which only depends on the local $L^\infty$ norm of $U$ (see Lemma \ref{lm:below}). This is a technical aspect of the proof, which we believe to have its own interest.
\vskip.2cm\noindent
The core of the matter is next to establish the a priori estimates for the gradient of \(u^{\varepsilon}\), the minimizer of the regularized functional \(\mathfrak{F}_{\mathbf{p},\varepsilon}\). 
As for the estimates leading to Proposition \ref{prop:high} and Theorem L, these are achieved by means of Moser--type schemes: a {\it slow one} and a {\it fast one}, respectively.
\par
The cornerstone of these schemes is a Caccioppoli inequality for power functions of the gradients (see Proposition \ref{prop:caccioespilon2}). In a simplified way, for every $\alpha\ge 0$ this reads as
\begin{equation}\label{eq213}
\sum_{i=1}^N\int |u_{x_i}|^{p_i-2}\,\left|\Big(G(\nabla u)^\frac{\alpha+1}{2}\Big)_{x_i}\right|^2\,dx\lesssim \alpha^2\int G(\nabla u)^{\alpha+2-\frac{2}{p_N}}\,dx,
\end{equation}
where $G$ is the same function as in \eqref{Gintro}. 
For simplicity, we put $f\equiv 0$ and write $u$ in place of $u^\varepsilon$.
\par
Observe that on the left-hand side of \eqref{eq213} we have a weighted gradient of a power of $G(\nabla u)$: the weights \(|u_{x_i}|^{p_i-2}\) are the typical feature of degenerate/singular orthotropic functionals. The main difficulty in getting regularity results out of this estimate is precisely due to their presence.
In contrast with the Caccioppoli inequality previously obtained in \cite[Lemma 3.1]{BB2} to handle the superquadratic case $p_1\ge 2$,
now these weights do not pop-up on the right-hand side. This is a crucial ingredient of the estimate: indeed, no
control from above would be possible on \(|u_{x_i}|^{p_i-2}\) if \(p_i<2\). 
\par
Not surprisingly, the proof of \eqref{eq213} relies on the differentiated Euler-Lagrange equation, which is nothing but the equation solved by the components of $\nabla u^\varepsilon$. In a nutshell, the idea to reach such an estimate not containing the nasty weights \(|u_{x_i}|^{p_i-2}\) on the right-hand side, is that of using an integration by parts trick: this permits to trade the presence of the term $D^2 G$,
with the more tractable one $D G$. This idea is certainly not new in the context of singular variational problems: it goes back at least to \cite{Na}, and has then become standard in the field. 
\par
However, as natural as this idea may appear, its technical implementation in our context needs some efforts: in particular, a careful choice of the test functions for the differentiated equation has to be done. Such a choice must reflect the algebraic structure of the operator, in a sense. Without entering too much into the details, we refer to the proof of Proposition \ref{prop:caccioespilon2} below. The choice of the correct test functions here has been suggested to us by \cite{Li}, even if our choice seems to be simpler and more natural.
\vskip.2cm\noindent
The Caccioppoli inequality \eqref{eq213} is first used in the proof of the higher integrability result of Proposition \ref{prop:high}. More specifically, it permits to obtain a self-improving estimate of the type
\begin{equation}\label{eq242}
\int G(\nabla u)^{\beta + 1}\,dx \lesssim \|u\|_{L^{\infty}}^2\,\beta^2\, \int G(\nabla u)^{\beta+1-\frac{2}{p_N}}\,dx.
\end{equation}
This is the {\it slow} Moser's iteration scheme we were referring to above: by iterating \eqref{eq242} a finite number of times, we can conclude that $G(\nabla u)$ (and thus $\nabla u$ itself) 
can be estimated in \(L^{q}\), for every {\it finite} \(q\geq 1\). Observe that the {\it additive} integrability gain at each step and the presence of the factor $\beta^2$ on the right-hand side make the previous scheme not suitable for being iterated infinitely many times.
 This explains why we cannot reach the limiting case \(\nabla u \in L^{\infty}\) with this approach. 
\par
Estimates like \eqref{eq242} are quite typical in the Regularity Theory, both in the contexts of standard and nonstandard growth problems (among others, see for example \cite[Proposition 3.1]{DiB} and \cite[Theorem 3]{Ch}, respectively). Usually, they are obtained by coupling an integration by parts, with a Caccioppoli inequality for the gradient, like the one \eqref{eq213} at our disposal. The $L^\infty$ bound on the solution is used to treat the solution itself as a constant in the estimates.  
\par
We stress here that in this part of the proofs we do not need the restriction $p_N\le 2$. Thus, in particular we can extend and simplify the higher integrability result we previously obtained\footnote{There is however a subtle detail here: the result in \cite{BB2} was obtained through a complicate self-improving iterative scheme (inspired from that of \cite[Theorem 1.1]{BFZ}), which {\it was not} of Moser--type. Actually, this was much more sophisticated and could be roughly described as follows: improvement of integrability of $N-1$ components of the gradient entails that the missing one improves its integrability, as well.} in \cite[Proposition 4.3]{BB2}.
\vskip.2cm\noindent
With the aid of \eqref{eq242}, in the case $p_N\le 2$ we can transpose to our situation the typical {\it absorption trick} which lies at the basis of the Lipschitz estimate for the standard $p-$Laplacian (see for example \cite[Section 3]{DiB}). Up to some nontrivial technical issues, this consists in observing that when \(G(\nabla u)\geq 1\), we have
\[
|u_{x_i}|^{p_i-2}\gtrsim  G(\nabla u)^{\frac{p_1-2}{p_1}}, \qquad \mbox{ since } p_i-2\le 0,
\]
and thus
\begin{equation}
\label{eq2421}
\sum_{i=1}^N\int |u_{x_i}|^{p_i-2}\,\left|\Big(G(\nabla u)^\frac{\alpha+1}{2}\Big)_{x_i}\right|^2\,dx\gtrsim \sum_{i=1}^N \int \left|\left(G(\nabla u)^{\frac{\alpha+2}{2}-\frac{1}{p_1}}\right)_{x_i}\right|^2\,dx.
\end{equation}
The weights $|u_{x_i}|^{p_i-2}$ have then been absorbed into a suitable power function of the gradient. In this sense, in the case $p_N\le 2$, the presence of the weights $|u_{x_i}|^{p_i-2}$ on the left-hand side of \eqref{eq242} helps, more than it hurts. 
\par
At this point, by joining \eqref{eq2421} and \eqref{eq242}, the orthotropic nature of the problem completely disappears and we simply fall into the realm of nonstandard growth problems.
A standard application of Sobolev inequality makes then possible to launch a standard Moser's iterative scheme (i.e. a {\it fast} one, with a {\it multiplicative} gain of integrability at each step). 
This permits to reach an $L^\infty-L^q$ estimate on $G(\nabla u)$, after infinitely many iterations.
This is not the end of the story. Indeed, we still have to pay attention to a detail which is quite typical of the nonstandard growth case: 
 the exponent $q$ in this a priori estimate could be too large. However, this preliminary estimate can be ``rectified'' by combining the higher integrability result of Proposition \ref{prop:high} together with an interpolation trick which  decreases the initial integrability requirement on $G(\nabla u)$. We then finally get a $L^\infty-L^1$ estimate on $G(\nabla u)$, as desired. 
\vskip.2cm\noindent
In contrast to Proposition \ref{prop:high} and Theorem L, the proof of Theorem S does not rely on the Caccioppoli inequality of Proposition \ref{prop:caccioespilon2}. The proof follows the same idea as in the case of the result for the familiar $p-$Laplacian, for the case $1<p<2$: we test the differentiated equation with the gradient itself $u_{x_k}$ and perform an integration by parts as in Naumann's trick \cite{Na}. Again, this permits to avoid using the undesired upper bound on the Hessian $D^2 G$. In order to conclude, one has to control from above terms of the form
\[
u_{x_kx_k}\,|u_{x_i}|^{p_i-1}.
\]
Observe that for every $k\not=i$, the two terms are completely decoupled. However, when $p_i\le 2$, we can simply estimate this term from above by Young's inequality  
\[
u_{x_kx_k}\,|u_{x_i}|^{p_i-1} \lesssim |u_{x_k}|^{p_k-2}\,|u_{x_kx_k}|^2 +|u_{x_k}|^{2-p_k}\,|u_{x_i}|^{2\,(p_i-1)}.
\] 
The first term is absorbed on the left-hand side, while the second term can be estimated from above by means of an integrability estimate (here we rely again on the information provided by Proposition \ref{prop:high}). This explains why we require \(p_N\leq 2\) in the statement of Theorem S. 

\subsection{Plan of the paper}
In Section \ref{preli}, we present the approximation scheme and some basic material used all along the paper. Section \ref{caccio} contains the crucial Caccioppoli-type inequality for the gradient (Proposition \ref{prop:caccioespilon2}). The latter is exploited in Section \ref{sec:uniform} to perform the slow Moser iteration   leading to the higher integrability estimate needed in  Proposition \ref{prop:high}. The Lipschitz bound related to Theorem L is proved in Section \ref{sec:Lip}, while Section \ref{diff} is devoted to the proof of the higher differentiability estimates corresponding to Theorem S. Then, in Section \ref{proofmain}, we eventually prove our three main results by passing to the limit in the approximation scheme. 
Finally, for completeness, we include in Appendix \ref{appA} the proof of a maximum principle ensuring  the uniform boundedness of the approximating sequence. 

\begin{ack}
This work has been finalized during a staying of P.\,B. and L.\,B. at the {\it Institute of Applied Mathematics and Mechanics} of the University of Warsaw, in July 2022. Iwona Chlebicka and Anna Zatorska-Goldstein are gratefully acknowledged for their kind invitation and the nice working atmosphere provided during the whole staying.
\par
C.\,L. is a member of the Gruppo Nazionale per l'Analisi Matematica, la Probabilit\`a
e le loro Applicazioni (GNAMPA) of the Istituto Nazionale di Alta Matematica (INdAM).
The three authors gratefully acknowledge the financial support of the projects FAR 2019 and FAR 2020 of the University of Ferrara.
\end{ack}

\section{Preliminaries}\label{preli}
In this section, we fix 
\[
1<p_1\le p_2\le \dots\le p_N<\infty.
\]
\subsection{Some auxiliary functions}
For every $i=1,\dots,N$ and $\varepsilon>0$, we define
\begin{equation}
\label{gi}
g_{i,\varepsilon}(t)=\frac{1}{p_i}\,(\varepsilon+t^2)^\frac{p_i}{2},\qquad \mbox{ for every } t\in\mathbb{R}.
\end{equation}
\begin{lm}[Sub-quadratic case]
Let $1<p_i\le 2$. Then for every $t\in\mathbb{R}$, we have
\begin{equation}
\label{g2}
(p_i-1)\,(\varepsilon+t^2)^\frac{p_i-2}{2}\le g''_{i,\varepsilon}(t)\le (\varepsilon+t^2)^\frac{p_i-2}{2},
\end{equation}
\begin{equation}
\label{g3}
g_{i,\varepsilon}(t)\le \frac{1}{p_i}\,\left(\varepsilon^\frac{p_i}{2}+g_{i,\varepsilon}'(t)\,t\right),
\end{equation}
\begin{equation}
\label{g4}
|g_{i,\varepsilon}'(t)|^2\le \frac{p_i}{p_i-1}\,g_{i,\varepsilon}''(t)\,g_{i,\varepsilon}(t).
\end{equation}
\end{lm}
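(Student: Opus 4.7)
The plan is entirely computational: differentiate $g_{i,\varepsilon}$ explicitly and verify each of the three inequalities by elementary algebra, using the hypothesis $1<p_i\le 2$ at the appropriate point.

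First I would compute, for all $t \in \mathbb{R}$,
\[
g_{i,\varepsilon}'(t)=t\,(\varepsilon+t^2)^{\frac{p_i-2}{2}},\qquad g_{i,\varepsilon}''(t)=(\varepsilon+t^2)^{\frac{p_i-4}{2}}\,\Big(\varepsilon+(p_i-1)\,t^2\Big).
\]
With this formula for $g_{i,\varepsilon}''$, inequality \eqref{g2} reduces to the double bound
\[
(p_i-1)\,(\varepsilon+t^2)\le \varepsilon+(p_i-1)\,t^2\le \varepsilon+t^2,
\]
both of which follow from $0<p_i-1\le 1$. This is the only place where $p_i\le 2$ is used for \eqref{g2}; if $p_i>2$ both inequalities would reverse.

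For \eqref{g3}, a direct calculation gives
\[
p_i\,g_{i,\varepsilon}(t)-g'_{i,\varepsilon}(t)\,t=(\varepsilon+t^2)^{\frac{p_i-2}{2}}\Big((\varepsilon+t^2)-t^2\Big)=\varepsilon\,(\varepsilon+t^2)^{\frac{p_i-2}{2}}.
\]
Since $p_i-2\le 0$, the function $s\mapsto s^{(p_i-2)/2}$ is nonincreasing on $(0,\infty)$, so $(\varepsilon+t^2)^{(p_i-2)/2}\le \varepsilon^{(p_i-2)/2}$. Multiplying by $\varepsilon$ yields $\varepsilon^{p_i/2}$ and then dividing by $p_i$ gives \eqref{g3}.

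Finally, for \eqref{g4}, I would use the lower bound already established in \eqref{g2}, which gives
\[
\frac{p_i}{p_i-1}\,g_{i,\varepsilon}''(t)\,g_{i,\varepsilon}(t)\ge \frac{p_i}{p_i-1}\,(p_i-1)\,(\varepsilon+t^2)^{\frac{p_i-2}{2}}\cdot\frac{1}{p_i}\,(\varepsilon+t^2)^{\frac{p_i}{2}}=(\varepsilon+t^2)^{p_i-1}.
\]
On the other hand $|g'_{i,\varepsilon}(t)|^2=t^2\,(\varepsilon+t^2)^{p_i-2}\le (\varepsilon+t^2)^{p_i-1}$, the last inequality being the trivial bound $t^2\le \varepsilon+t^2$. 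Combining the two lines yields \eqref{g4}.

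There is no genuine obstacle: each of the three estimates is a one-line algebraic consequence of the explicit formulas for $g_{i,\varepsilon}',g_{i,\varepsilon}''$ together with the assumption $p_i\le 2$ (used to compare $p_i-1$ with $1$, and to bound $(\varepsilon+t^2)^{(p_i-2)/2}$ by $\varepsilon^{(p_i-2)/2}$). The only point deserving care is not to invoke any bound that would implicitly require $p_i\ge 2$; keeping track of the direction of monotonicity of $s\mapsto s^{(p_i-2)/2}$ is enough to avoid this.
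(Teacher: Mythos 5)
Your proof is correct, and the approach is essentially the same as the paper's: explicit differentiation of $g_{i,\varepsilon}$, followed by direct algebraic comparisons using $0<p_i-1\le 1$ and the monotonicity of $s\mapsto s^{(p_i-2)/2}$. The only cosmetic difference is in \eqref{g4}, where you bound both sides by the common quantity $(\varepsilon+t^2)^{p_i-1}$ while the paper writes $|g'_{i,\varepsilon}(t)|^2\le (\varepsilon+t^2)^{(p_i-2)/2}(\varepsilon+t^2)^{p_i/2}$ and then substitutes in the lower bound from \eqref{g2}; the content is identical.
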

\begin{proof}
The second derivative of $g_{i,\varepsilon}$ is given by
\[
\begin{split}
g''_{i,\varepsilon}(t)&=(\varepsilon+t^2)^\frac{p_i-2}{2}+(p_i-2)\,(\varepsilon+t^2)^\frac{p_i-4}{2}\,t^2=(\varepsilon+t^2)^\frac{p_i-2}{2}\,\left(1+(p_i-2)\,\frac{t^2}{\varepsilon+t^2}\right).
\end{split}
\]
In particular, by using that $0<p_i-1\le 1$, we easily get \eqref{g2}.
\vskip.2cm\noindent
We also observe that 
\[
\begin{split}
g_{i,\varepsilon}(t)=\frac{1}{p_i}\,(\varepsilon+t^2)^\frac{p_i}{2}
&=\frac{1}{p_i}\,(\varepsilon+t^2)^\frac{p_i-2}{2}\,\varepsilon+\frac{1}{p_i}\,(\varepsilon+t^2)^\frac{p_i-2}{2}\,t^2\le \frac{\varepsilon^\frac{p_i}{2}}{p_i}+\frac{g_{i,\varepsilon}'(t)}{p_i}\,t,
\end{split}
\]
which proves \eqref{g3}.
\vskip.2cm\noindent
Finally, \eqref{g4} follows by writing 
\[
\begin{split}
|g'_{i,\varepsilon}(t)|^2&=(\varepsilon+t^2)^\frac{p_i-2}{2}\,(\varepsilon+t^2)^\frac{p_i-2}{2}\,t^2\le (\varepsilon+t^2)^\frac{p_i-2}{2}\,(\varepsilon+t^2)^\frac{p_i}{2},
\end{split}
\]
and then using the definition of $g_{i,\varepsilon}$ and the lower bound in \eqref{g2}.
\end{proof}
\begin{lm}[Super-quadratic case]
Let $p_i> 2$. Then for every $t\in\mathbb{R}$, we have
\begin{equation}
\label{g2b}
(\varepsilon+t^2)^\frac{p_i-2}{2}\le g''_{i,\varepsilon}(t)\le (p_i-1)\,(\varepsilon+t^2)^\frac{p_i-2}{2},
\end{equation}
\begin{equation}
\label{g3b}
g_{i,\varepsilon}(t)\le \frac{2^\frac{p_i-2}{2}}{p_i}\,\left(\varepsilon^\frac{p_i}{2}+g_{i,\varepsilon}'(t)\,t\right),
\end{equation}
\begin{equation}
\label{g4b}
|g_{i,\varepsilon}'(t)|^2\le p_i\,g_{i,\varepsilon}''(t)\,g_{i,\varepsilon}(t).
\end{equation}
\end{lm}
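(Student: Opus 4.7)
The proof will run in parallel with that of the sub-quadratic lemma, exploiting the same identity
\[
g''_{i,\varepsilon}(t)=(\varepsilon+t^2)^{\frac{p_i-2}{2}}\,\left(1+(p_i-2)\,\frac{t^2}{\varepsilon+t^2}\right),
\]
but now with $p_i-2>0$, so several inequalities flip sign or require a convexity correction. I would prove the three estimates in order.

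For \eqref{g2b}, since $p_i-2>0$ the ratio $t^2/(\varepsilon+t^2)$ lies in $[0,1)$, hence the factor in parentheses lies in $[1,p_i-1)$. The desired two-sided bound then follows immediately by multiplying back by $(\varepsilon+t^2)^{(p_i-2)/2}$.

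For \eqref{g3b}, the clean splitting used in the sub-quadratic case fails because $(\varepsilon+t^2)^{(p_i-2)/2}\varepsilon \ge \varepsilon^{p_i/2}$ no longer helps in the right direction. Instead, I would invoke the elementary convexity inequality
\[
(a+b)^{p_i/2}\le 2^{\frac{p_i-2}{2}}\,\bigl(a^{p_i/2}+b^{p_i/2}\bigr),\qquad a,b\ge 0,
\]
applied with $a=\varepsilon$, $b=t^2$, to obtain $(\varepsilon+t^2)^{p_i/2}\le 2^{(p_i-2)/2}(\varepsilon^{p_i/2}+t^{p_i})$. Then, since $p_i\ge 2$, the trivial bound $t^{p_i}=(t^2)^{(p_i-2)/2}\,t^2\le (\varepsilon+t^2)^{(p_i-2)/2}\,t^2= g'_{i,\varepsilon}(t)\,t$ closes the estimate after dividing by $p_i$.

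For \eqref{g4b}, I would first note the trivial bound $|g'_{i,\varepsilon}(t)|^2=(\varepsilon+t^2)^{p_i-2}\,t^2\le (\varepsilon+t^2)^{p_i-1}$, and then use the lower bound in \eqref{g2b} together with the definition of $g_{i,\varepsilon}$ to write $(\varepsilon+t^2)^{p_i-1}=(\varepsilon+t^2)^{(p_i-2)/2}\cdot(\varepsilon+t^2)^{p_i/2}\le p_i\,g''_{i,\varepsilon}(t)\,g_{i,\varepsilon}(t)$. The only place where any cleverness is required is in \eqref{g3b}, where the convexity splitting replaces the direct factorization available when $p_i\le 2$; this is also responsible for the appearance of the constant $2^{(p_i-2)/2}$, which is the only stylistic difference with respect to \eqref{g3}.
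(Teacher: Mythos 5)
Your proof is correct and follows essentially the same route as the paper: \eqref{g2b} from the explicit factored form of $g''_{i,\varepsilon}$ (which the paper states it omits precisely because it mirrors the sub-quadratic argument), \eqref{g3b} from the convexity bound $(\varepsilon+t^2)^{p_i/2}\le 2^{(p_i-2)/2}(\varepsilon^{p_i/2}+|t|^{p_i})$ followed by $|t|^{p_i}\le g'_{i,\varepsilon}(t)\,t$, and \eqref{g4b} by estimating $t^2\le\varepsilon+t^2$ and then invoking the lower bound in \eqref{g2b} together with $(\varepsilon+t^2)^{p_i/2}=p_i\,g_{i,\varepsilon}(t)$. The only cosmetic difference is that you intermediate through $(\varepsilon+t^2)^{p_i-1}$ in \eqref{g4b}, whereas the paper keeps it factored as $(\varepsilon+t^2)^{(p_i-2)/2}(\varepsilon+t^2)^{p_i/2}$, which is the same calculation.
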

\begin{proof}
The proofs of \eqref{g2b} and \eqref{g4b} are similar to those of \eqref{g2} and \eqref{g4} respectively and we omit them.
In order to prove \eqref{g3b}, we use the convexity of the map $\tau\mapsto|\tau|^{p_i/2}$. This implies
\begin{equation}\label{eq464}
g_{i,\varepsilon}(t)=\frac{1}{p_i}\,(\varepsilon+t^2)^\frac{p_i}{2}\le \frac{2^\frac{p_i-2}{2}}{p_i}\,\left(\varepsilon^\frac{p_i}{2}+|t|^{p_i}\right).
\end{equation}
We then observe that 
\[
|t|^{p_i}=|t|^{p_i-2}\,t^2\le (\varepsilon+t^2)^\frac{p_i-2}{2}\,t^2=g_{i,\varepsilon}'(t)\,t.
\]
By combining the last two inequalities, we get \eqref{g3b}.
\end{proof}
We also define the function
\begin{equation}
\label{G}
G_\varepsilon(z):=\sum_{i=1}^N g_{i,\varepsilon}(z_i)=\sum_{i=1}^N \frac{1}{p_i}(\varepsilon+z_i^2)^\frac{p_i}{2},
\end{equation}
which will play a crucial role in our estimates. The next result holds without any restriction on $p_i$.
\begin{lm}
\label{lm:majorette}
For every $z=(z_1,\dots,z_N)\in\mathbb{R}^N$ and every $i=1,\dots,N$, we have
\begin{equation}
\label{majorette}
|g'_{i,\varepsilon}(z_i)|\le p_N^\frac{p_N-1}{p_N}\,G_\varepsilon(z)^\frac{p_i-1}{p_i}.
\end{equation}
\end{lm}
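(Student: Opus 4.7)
The strategy is a direct elementary manipulation starting from the explicit formula for $g'_{i,\varepsilon}$. Since $g_{i,\varepsilon}(t)=(\varepsilon+t^2)^{p_i/2}/p_i$, we have $g'_{i,\varepsilon}(t)=(\varepsilon+t^2)^{(p_i-2)/2}\,t$, and hence the crude estimate $|t|\le (\varepsilon+t^2)^{1/2}$ gives
\[
|g'_{i,\varepsilon}(z_i)|\le (\varepsilon+z_i^2)^{\frac{p_i-1}{2}}.
\]
Inverting the definition of $g_{i,\varepsilon}$, one has $(\varepsilon+z_i^2)^{1/2}=\bigl(p_i\,g_{i,\varepsilon}(z_i)\bigr)^{1/p_i}$, which rewrites the previous bound as
\[
|g'_{i,\varepsilon}(z_i)|\le p_i^{\frac{p_i-1}{p_i}}\,g_{i,\varepsilon}(z_i)^{\frac{p_i-1}{p_i}}.
\]

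Next, since each term $g_{j,\varepsilon}(z_j)$ is nonnegative, we have the trivial pointwise comparison $g_{i,\varepsilon}(z_i)\le G_\varepsilon(z)$, and therefore (as $(p_i-1)/p_i\ge 0$)
\[
g_{i,\varepsilon}(z_i)^{\frac{p_i-1}{p_i}}\le G_\varepsilon(z)^{\frac{p_i-1}{p_i}}.
\]
It only remains to replace the constant $p_i^{(p_i-1)/p_i}$ by the uniform $p_N^{(p_N-1)/p_N}$.

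For this last step I would verify that the function $\varphi(p):=p^{1-1/p}$ is nondecreasing on $[1,+\infty)$. Taking logarithms, $\log\varphi(p)=(1-1/p)\,\log p$, whose derivative equals $p^{-2}\,(\log p + p-1)$; this is nonnegative for $p\ge 1$, so $\varphi$ is indeed monotone nondecreasing. Since $1<p_i\le p_N$, we conclude $p_i^{(p_i-1)/p_i}\le p_N^{(p_N-1)/p_N}$, which combined with the previous display yields the desired inequality \eqref{majorette}.

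There is no genuine obstacle here: the proof is a short sequence of algebraic manipulations, and the only mildly delicate point is the monotonicity of $p\mapsto p^{1-1/p}$, which is needed to guarantee that the constant on the right-hand side depends only on $p_N$ (and not on the individual exponent $p_i$), as stated in the lemma.
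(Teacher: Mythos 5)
Your proof is correct and follows essentially the same chain of elementary manipulations as the paper: bound $|g'_{i,\varepsilon}(z_i)|$ by $(\varepsilon+z_i^2)^{(p_i-1)/2}$, rewrite this in terms of $g_{i,\varepsilon}$, and compare with $G_\varepsilon$. The only (cosmetic) difference is in the last step: the paper keeps the factor $p_i$ inside, bounds $p_i\,g_{i,\varepsilon}(z_i)\le\sum_k p_k\,g_{k,\varepsilon}(z_k)\le p_N\,G_\varepsilon(z)$ to land directly on $p_N^{(p_i-1)/p_i}\,G_\varepsilon(z)^{(p_i-1)/p_i}$, and then only needs the trivial monotonicity of the exponent $1-1/p$ together with $p_N>1$; you instead drop to $g_{i,\varepsilon}(z_i)\le G_\varepsilon(z)$ first and therefore need the (slightly less immediate, but correctly verified) monotonicity of $p\mapsto p^{1-1/p}$ to fix the constant.
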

\begin{proof}
By recalling the definition of both $g_{i,\varepsilon}$ and $G_{\varepsilon}$, we have 
\[
\begin{split}
|g'_{i,\varepsilon}(z_i)|\le (\varepsilon+|z_i|^2)^\frac{p_i-1}{2}&=\big(p_i\,g_{i,\varepsilon}(z_i)\big)^\frac{p_i-1}{p_i}\le\left(\sum_{k=1}^N p_k\,g_{k,\varepsilon}(z_k)\right)^\frac{p_i-1}{p_i}\le p_N^\frac{p_i-1}{p_i}\,G_\varepsilon(z)^\frac{p_i-1}{p_i}.
\end{split}
\]
By using that $p_N>1$ and that $p_i\le p_N$, we can estimate the last term from above as claimed.
\end{proof}

\subsection{Regularized problems}
We will use an approximation scheme which is similar to that already used in our previous papers, starting from \cite[Section 2]{BBJ}.
We want to consider local minimizers of the following convex integral functional
\[
\mathfrak{F}_{\mathbf{p}}(u;\Omega')=\sum_{i=1}^N \frac{1}{p_i}\,\int_{\Omega'} |u_{x_i}|^{p_i}\, dx-\int_{\Omega'} f\,u\,dx,\qquad u\in W^{1,\mathbf{p}}_{\rm loc}(\Omega)\cap L^\infty_{\rm loc}(\Omega),\ \Omega'\Subset\Omega.
\]
The function $f$ is taken to belong to $L^1_{\rm loc}(\Omega)$.
In the rest of the paper, {\it we fix $U\in W^{1,\mathbf{p}}_{\rm loc}(\Omega)\cap L^\infty_{\rm loc}(\Omega)$ a bounded local minimizer of} $\mathfrak{F}_\mathbf{p}$. We also fix a ball 
\[
B \Subset \Omega\quad \mbox{ such that }\quad 2\,B\Subset\Omega \mbox{ as well}.
\] 
Here by $\lambda\,B$ we denote the ball concentric with $B$, scaled by a factor $\lambda>0$. 
We set
\[
\varepsilon_0=\min\left\{1, \frac{\mbox{ radius of }B}{2}\right\}>0.
\]
For every $0<\varepsilon\le \varepsilon_0$ and every \(x\in \overline{B}\), we then define 
\[
U^\varepsilon(x)=U\ast \varrho_\varepsilon(x)\qquad \mbox{ and }\qquad f^\varepsilon(x)=f\ast \varrho_\varepsilon(x).
\] 
As usual, we denote by $\varrho_\varepsilon$ the usual family of Friedrichs mollifiers, supported in a ball of radius $\varepsilon$ centered at the origin. 
Finally, we set
\[
M:=\|U\|_{L^\infty(2\,B)},
\]
and
take $\zeta^\varepsilon$ to be a $C^\infty$ function of one variable, such that 
\[
0\le (\zeta^\varepsilon)'\le 1\qquad \mbox{ and }\qquad \zeta^\varepsilon(t)=\left\{\begin{array}{cc}
M,& \mbox{ if } t\ge M+\varepsilon,\\
-M,& \mbox{ if } t\le -M-\varepsilon,
\end{array}
\right.
\]
and such that
\[
\lim_{\varepsilon\to 0} \|\zeta^\varepsilon-\zeta\|_{L^\infty(\mathbb{R})}=0,\qquad \mbox{ where } \zeta(t)=\max\Big\{\min\{t,M\},-M\Big\}, \mbox{ for } t\in\mathbb{R}.
\]
By recalling the definition \eqref{gi} of $g_{i,\varepsilon}$, we then define the regularized functional
\[
\mathfrak{F}_{\mathbf{p},\varepsilon}(v;B)=\sum_{i=1}^N \int_B g_{i,\varepsilon}(v_{x_i})\, dx-\int_B f^\varepsilon\,\zeta^\varepsilon(v)\,dx.
\]
\begin{lm}[Existence and regularity of a minimum for $\mathfrak{F}_{\mathbf{p},\varepsilon}$]
\label{lm:below}
For every $0<\varepsilon\le \varepsilon_0$, the problem
\begin{equation}
\label{approximated}
\min\left\{\mathfrak{F}_{\mathbf{p},\varepsilon}(v;B)\, :\, v-U^\varepsilon\in W^{1,\mathbf{p}}_0(B)\right\},
\end{equation}
admits a solution $u^\varepsilon$, which belongs to $C^{\infty}(\overline{B})$.
Moreover, for every \(0<\varepsilon\le \varepsilon_0\), we have
\[
\|u^\varepsilon\|_{L^\infty(B)}\le M+\varepsilon.
\]
\end{lm}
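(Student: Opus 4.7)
\emph{Existence} comes from the direct method. The admissible class $U^\varepsilon+W^{1,\mathbf{p}}_0(B)$ is non-empty since $U^\varepsilon\in C^\infty(\overline B)$. Using $g_{i,\varepsilon}(t)\geq |t|^{p_i}/p_i$ and applying Poincaré to $v-U^\varepsilon$, one obtains coercivity in the anisotropic norm $\sum_i\|v_{x_i}\|_{L^{p_i}(B)}$; the penalty $-\int_B f^\varepsilon\,\zeta^\varepsilon(v)\,dx$ is absolutely bounded by $(M+\varepsilon)\|f^\varepsilon\|_{L^1(B)}$ because $|\zeta^\varepsilon|\leq M+\varepsilon$, so it does not spoil coercivity. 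Lower semicontinuity of the gradient part follows from convexity of each $g_{i,\varepsilon}$, while the penalty is continuous along weakly converging sequences thanks to the Rellich compactness $W^{1,p_1}(B)\hookrightarrow L^{p_1}(B)$, the boundedness and continuity of $\zeta^\varepsilon$, and dominated convergence. This produces a minimiser $u^\varepsilon$.

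\emph{Uniform $L^\infty$ bound.} Because $\varepsilon\leq\varepsilon_0$, the ball $B_\varepsilon(x)$ is contained in $2B$ for every $x\in B$, hence $\|U^\varepsilon\|_{L^\infty(B)}\leq \|U\|_{L^\infty(2B)}=M$. Assume towards a contradiction that $A:=\{u^\varepsilon>M+\varepsilon\}$ has positive measure, and set $v:=\min\{u^\varepsilon,M+\varepsilon\}$. Since $(u^\varepsilon-(M+\varepsilon))^+$ has vanishing trace on $\partial B$ (as $U^\varepsilon\leq M$), $v$ still lies in $U^\varepsilon+W^{1,\mathbf{p}}_0(B)$ and is admissible. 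On $A$ both $v$ and $u^\varepsilon$ take values in $[M+\varepsilon,\infty)$, where $\zeta^\varepsilon\equiv M$, so the penalty contributions of the two functions coincide; meanwhile $\nabla v\equiv 0$ on $A$, giving
\begin{equation*}
\mathfrak{F}_{\mathbf{p},\varepsilon}(u^\varepsilon;B)-\mathfrak{F}_{\mathbf{p},\varepsilon}(v;B)=\sum_{i=1}^N\int_A\bigl(g_{i,\varepsilon}(u^\varepsilon_{x_i})-g_{i,\varepsilon}(0)\bigr)\,dx\geq 0,
\end{equation*}
with equality forcing $\nabla u^\varepsilon\equiv 0$ a.e.\ on $A$ by strict convexity of $g_{i,\varepsilon}$ and uniqueness of its minimum at $0$. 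Minimality of $u^\varepsilon$ forces this equality, hence $(u^\varepsilon-(M+\varepsilon))^+\in W^{1,\mathbf{p}}_0(B)$ has vanishing gradient and is therefore identically zero, contradicting $|A|>0$. The symmetric argument with $\max\{u^\varepsilon,-(M+\varepsilon)\}$ yields the matching lower bound.

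\emph{Smoothness.} With the $L^\infty$ bound in hand, the minimiser solves weakly
\begin{equation*}
-\sum_{i=1}^N\bigl(g'_{i,\varepsilon}(u^\varepsilon_{x_i})\bigr)_{x_i}=f^\varepsilon\,(\zeta^\varepsilon)'(u^\varepsilon)\qquad \text{in } B,
\end{equation*}
with smooth right-hand side and smooth Dirichlet datum $U^\varepsilon$. Because $\varepsilon>0$, the integrand $G_\varepsilon$ is $C^\infty$ and strictly convex; a Moser iteration of the type developed in Section~\ref{caccio} (or the anisotropic regularity results of \cite{Li}) applied to the differentiated Euler--Lagrange equation first yields $\nabla u^\varepsilon\in L^\infty_{\rm loc}(B)$. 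On the resulting bounded-gradient region the coefficients $g''_{i,\varepsilon}$ are squeezed between strictly positive constants (depending on $\varepsilon$), so the equation is uniformly elliptic with $C^\infty$ coefficients; classical Schauder and bootstrap arguments then give $u^\varepsilon\in C^\infty(\overline B)$, the boundary smoothness being inherited from that of $U^\varepsilon$.

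The main obstacle is the $L^\infty$ step, since $\mathfrak{F}_{\mathbf{p},\varepsilon}$ is not obviously convex in $v$ because of the penalty $\zeta^\varepsilon$, and one has to exploit carefully the flat plateaux of $\zeta^\varepsilon$ outside $[-M-\varepsilon,M+\varepsilon]$ to run the truncation/comparison argument; this is presumably why it is isolated in Appendix~\ref{appA}.
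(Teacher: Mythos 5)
The existence step (direct method with coercivity from $g_{i,\varepsilon}(t)\ge |t|^{p_i}/p_i$, convexity for semicontinuity, and Rellich plus dominated convergence for the penalty) is correct and more elementary than the paper's; it also sidesteps the Lavrentiev-gap issue that the paper handles via \cite{BMT}. The $L^\infty$ bound by truncation at level $M+\varepsilon$ is essentially the argument of Lemma~\ref{lm-bd-1} inlined, and is fine.

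The smoothness step, however, has a genuine gap. You invoke ``a Moser iteration of the type developed in Section~\ref{caccio}'', but Proposition~\ref{prop:caccioespilon2} and everything downstream are stated and proved \emph{for} the smooth minimizer produced by Lemma~\ref{lm:below}: they test the differentiated Euler--Lagrange equation \eqref{Eulerdiff} with products of second derivatives of $u^\varepsilon$, which presupposes exactly the regularity you are trying to establish. This is circular. Moreover, your claim that ``the coefficients $g_{i,\varepsilon}''$ are squeezed between strictly positive constants (depending on $\varepsilon$)'' is false as stated: for $p_i<2$ one has $g_{i,\varepsilon}''(t)\le (\varepsilon+t^2)^{(p_i-2)/2}\to 0$ as $|t|\to\infty$, so even for fixed $\varepsilon>0$ the regularized operator is \emph{not} uniformly elliptic. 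Uniform ellipticity only appears \emph{after} one has a global $L^\infty$ bound on $\nabla u^\varepsilon$. Finally, even granting an interior gradient bound, your iteration only yields $\nabla u^\varepsilon\in L^\infty_{\mathrm{loc}}(B)$, whereas the statement claims $C^\infty(\overline B)$; you supply no argument for boundary regularity beyond the vague remark that it is ``inherited from $U^\varepsilon$''.

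The paper avoids all three difficulties in one stroke by going through Stampacchia: it verifies the structural ellipticity \eqref{eq363} (a lower bound of the form $\nu_\varepsilon(1+|z|^2)^\tau|\xi|^2$ with $\tau>-1/2$, which is weaker than uniform ellipticity but suffices for the bounded slope condition framework), checks that the uniform convexity of $B$ and the smoothness of $U^\varepsilon$ give the bounded slope condition, and applies \cite[Theorems~9.2 and~9.3]{St} to get a \emph{global} Lipschitz minimizer in $C^\infty(\overline B)$ in the class $U^\varepsilon+W^{1,\infty}_0(B)$. It then invokes \cite{BMT} to rule out a Lavrentiev gap and conclude that the same function minimizes over $U^\varepsilon+W^{1,\mathbf p}_0(B)$. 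If you want to keep the direct-method existence step, you would still need an independent global Lipschitz estimate (e.g.\ via barriers constructed from the bounded slope condition) before any bootstrap; as written, the regularity part of your proof does not close.
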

\begin{proof}
We first show that we can apply \cite[Theorem 9.2]{St} and get existence of a solution to 
\begin{equation}
\label{approximatedLip}
\min\left\{\mathfrak{F}_{\mathbf{p},\varepsilon}(v;B)\, :\, v-U^\varepsilon\in W^{1,\infty}_0(B)\right\}.
\end{equation}
For this, we check the required assumptions. We first claim that for every \(z,\xi\in\mathbb{R}^N\)
\begin{equation}\label{eq363}
\sum_{i=1}^{N}g_{i, \varepsilon}''(z_i)\,\xi_i^2\geq \nu_{\varepsilon}\, (1+|z|^2)^{\tau}\,|\xi|^2, \qquad \mbox{ with }\left\{\begin{array}{l}
\nu_\varepsilon:=\min \left\{p_1-1, \varepsilon^{\frac{p_N-2}{2}}\right\}>0,\\
\\
\tau:=\min\left\{0, \dfrac{p_1-2}{2}\right\}>-\dfrac{1}{2}.
\end{array}
\right.
\end{equation}
From \eqref{g2} and \eqref{g2b} we get
\[
\sum_{i=1}^{N}g_{i, \varepsilon}''(z_i)\,\xi_i^2\geq \sum_{i=1}^N \min\{p_i-1,1\}\,(\varepsilon+z_i^2)^{\frac{p_i-2}{2}}\,\xi_i^2.
\]
Then we observe that if \(p_i\le 2\), then \(p_1\le 2\) and we can write:
\[
\min\{p_i-1,1\}\,(\varepsilon+z_i^2)^{\frac{p_i-2}{2}}\ge (p_1-1)\, (1+|z|^2)^\frac{p_1-2}{2}=(p_1-1)\, (1+|z|^2)^\tau.
\]
If instead \(p_i>2\), then
\[
\min\{p_i-1,1\}\,(\varepsilon+z_i^2)^{\frac{p_i-2}{2}}\ge \varepsilon^\frac{p_N-2}{2}\ge \,\varepsilon^\frac{p_N-2}{2}\, (1+|z|^2)^\tau.
\]
Thus, in both cases, \eqref{eq363} holds.
\par
As for the lower order term, observe that the smooth function \(h^\varepsilon(x,u):=f^{\varepsilon}(x)\,\zeta^\varepsilon(u)\) verifies
\[
\frac{\partial h^{\varepsilon}}{\partial u}(x,u)=0,\qquad \mbox{ for every } (x,u)\in B\times (\mathbb{R}\setminus (-M-\varepsilon,M+\varepsilon)).
\] 
Finally, the uniform convexity of \(B\) and the smoothness of \(U^\varepsilon\) entail that the latter satisfies the bounded slope condition.

Then \cite[Theorem 9.2]{St} yields the existence of a solution to \eqref{approximatedLip}. Since all the data are smooth, \cite[Theorem 9.3]{St} implies that \(u^\varepsilon\in C^{\infty}(\overline{B})\). 
We claim that \(u^\varepsilon\) is a solution of \eqref{approximated}, as well. Indeed, 
by using \cite[Theorem 1.1]{BMT}, for every \(v\in U^\varepsilon+W^{1,\mathbf{p}}_0(B)\) we can infer the existence of  \(\{v^k\}_{k\in\mathbb{N}}\subset U^\varepsilon+W^{1,\infty}_0(B)\) converging to \(v\) in \(W^{1,p_1}(B)\) and such that 
\[
\lim_{k\to \infty} \sum_{i=1}^N \int_B g_{i,\varepsilon}(v^{k}_{x_i})\, dx = \sum_{i=1}^N \int_B g_{i,\varepsilon}(v_{x_i})\, dx. 
\]
By the dominated convergence theorem and the uniform boundedness of \(\zeta^{\varepsilon}\), we also have
\[
\lim_{k\to \infty}\int_B f^\varepsilon\, \zeta^\varepsilon(v^k)\,dx = \lim_{k\to \infty}\int_B f^\varepsilon\, \zeta^\varepsilon(v)\,dx.
\] 
This proves that there is no Lavrentiev phenomenon for \(\mathfrak{F}_{\mathbf{p},\varepsilon}\), that is
\[
\mathfrak{F}_{\mathbf{p},\varepsilon}(u^\varepsilon ;B) = \min_{U^\varepsilon+W^{1,\infty}_0(B)} \mathfrak{F}_{\mathbf{p},\varepsilon} = \inf_{U^\varepsilon+W^{1,\mathbf{p}}_0(B)} \mathfrak{F}_{\mathbf{p},\varepsilon}.
\]
Thus, we get that \(u^\varepsilon\) solves \eqref{approximated}, as well.
Finally, the claimed \(L^{\infty}\) estimate readily follows from Lemma \ref{lm-bd-1} in the Appendix.
\end{proof}
The smooth minimizer \(u^\varepsilon\) satisfies the Euler-Lagrange equation
\begin{equation}
\label{nondiff}
\sum_{i=1}^N \int g_{i,\varepsilon}'(u^\varepsilon_{x_i})\,\varphi_{x_i}\,dx=\int f^{\varepsilon} (\zeta^{\varepsilon})'(u^\varepsilon)\,\varphi\,dx, \qquad \mbox{ for every } \varphi\in W^{1,\mathbf{p}}_0(B).
\end{equation}

For every \(k=1, \dots, N\), one can insert test functions of the form \(\varphi_{x_k}\), with $\varphi\in C^2$ compactly supported in $B$. By integrating by parts, we then get the equation for the partial derivatives of \(u^\varepsilon\) 
\begin{equation}
\label{Eulerdiff}
\sum_{i=1}^N \int g_{i,\varepsilon}''(u^\varepsilon_{x_i})\,u^{\varepsilon}_{x_k\,x_i}\varphi_{x_i}\,dx=\int \big(f^{\varepsilon}\,(\zeta^{\varepsilon})'\,(u^{\varepsilon})\big)_{x_k}\,\varphi\,dx,\qquad \mbox{ for every } k=1,\dots,N.
\end{equation}
As usual, by a density argument, the equation can be tested by any $\varphi\in W^{1,\mathbf{p}}_0(B)$.

The first ingredient of our recipe is a simple  a priori estimate, which is essentially the same as in \cite[Lemma 2.1]{BB2}: the only difference is the presence of the non-autonomous  and nonlinear term $f^\varepsilon\,\zeta^\varepsilon(v)$, together with a slight modification of the function $g_{i,\varepsilon}$.
\begin{lm}[Basic energy estimate]
\label{lm:unif}
For every $0<\varepsilon\le \varepsilon_0$, the following uniform estimate holds 
\[
\sum_{i=1}^N \frac{1}{p_i}\,\int_B |u^\varepsilon_{x_i}|^{p_i}\, dx\le C\,\left(\sum_{i=1}^N \frac{1}{p_i}\,\int_{2\,B} |U_{x_i}|^{p_i}\, dx+\varepsilon^\frac{p_1}{2}+\|f\|_{L^1(2\,B)}\right),
\]
for some $C=C(N,B,\mathbf{p},M)>0$.
\end{lm}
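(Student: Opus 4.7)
The strategy is the standard comparison argument using the minimality of $u^\varepsilon$ against the natural competitor $U^\varepsilon \in U^\varepsilon + W^{1,\mathbf{p}}_0(B)$, combined with elementary mollification estimates and the uniform bound $|\zeta^\varepsilon|\le M+\varepsilon\le M+1$.

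First, I would use $\mathfrak{F}_{\mathbf{p},\varepsilon}(u^\varepsilon;B)\le \mathfrak{F}_{\mathbf{p},\varepsilon}(U^\varepsilon;B)$, rearranged as
\[
\sum_{i=1}^N\int_B g_{i,\varepsilon}(u^\varepsilon_{x_i})\,dx \le \sum_{i=1}^N\int_B g_{i,\varepsilon}(U^\varepsilon_{x_i})\,dx+\int_B f^\varepsilon\,\bigl(\zeta^\varepsilon(u^\varepsilon)-\zeta^\varepsilon(U^\varepsilon)\bigr)\,dx.
\]
On the left-hand side I would use the trivial lower bound $g_{i,\varepsilon}(t)\ge \tfrac{1}{p_i}|t|^{p_i}$, which is immediate from the definition \eqref{gi}. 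For the lower-order term on the right, since $\|\zeta^\varepsilon\|_{L^\infty(\mathbb{R})}\le M+\varepsilon\le M+1$, the absolute value is bounded by $2(M+1)\|f^\varepsilon\|_{L^1(B)}$, and a standard mollification inequality gives $\|f^\varepsilon\|_{L^1(B)}\le \|f\|_{L^1(2B)}$ as soon as $\varepsilon\le \varepsilon_0$.

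Next I would control $\sum_i\int_B g_{i,\varepsilon}(U^\varepsilon_{x_i})\,dx$. Using the convexity of $t\mapsto |t|^{p_i/2}$ one writes
\[
g_{i,\varepsilon}(U^\varepsilon_{x_i})=\frac{1}{p_i}(\varepsilon+|U^\varepsilon_{x_i}|^2)^{p_i/2}\le \frac{2^{(p_i-2)_+/2}}{p_i}\bigl(\varepsilon^{p_i/2}+|U^\varepsilon_{x_i}|^{p_i}\bigr),
\]
and then the standard estimate $\|U^\varepsilon_{x_i}\|_{L^{p_i}(B)}\le \|U_{x_i}\|_{L^{p_i}(2B)}$ transfers the bound to the original $U$ on $2B$. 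Finally, since $\varepsilon\le 1$ and $p_1\le p_i$, we have $\varepsilon^{p_i/2}\le \varepsilon^{p_1/2}$, so that all the error terms of the form $\varepsilon^{p_i/2}|B|$ can be collected into a single $\varepsilon^{p_1/2}$ contribution (with a constant depending on $B$, $N$ and $\mathbf{p}$).

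Putting these three ingredients together yields the stated inequality with a constant $C=C(N,B,\mathbf{p},M)$. There is no real obstacle here: the only points requiring mild care are (i) writing the mollification inequality for $f^\varepsilon$ and $U^\varepsilon$ (which use $\varepsilon\le \varepsilon_0\le \mathrm{dist}(B,\partial(2B))$), and (ii) keeping track of the dependence of the multiplicative constant, which only involves the uniform bound $M+1$ on $\zeta^\varepsilon$, the measure of $B$, and the exponents $p_i$.
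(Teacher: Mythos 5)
Your argument is correct and matches the paper's proof in all essentials: comparison with the competitor $U^\varepsilon$, the elementary lower bound $g_{i,\varepsilon}(t)\ge |t|^{p_i}/p_i$, the bound on the lower-order term via the uniform bound on $\zeta^\varepsilon$ and $\|f^\varepsilon\|_{L^1(B)}\le\|f\|_{L^1(2B)}$, and the pointwise upper bound \eqref{piccolina} together with the mollifier estimate on $U^\varepsilon_{x_i}$. The only cosmetic difference is that the paper applies Jensen's inequality to pass from $g_{i,\varepsilon}(U^\varepsilon_{x_i})$ to $g_{i,\varepsilon}(U_{x_i})\ast\varrho_\varepsilon$ before invoking the pointwise bound, whereas you bound $g_{i,\varepsilon}(U^\varepsilon_{x_i})$ pointwise first and then use $\|U^\varepsilon_{x_i}\|_{L^{p_i}(B)}\le\|U_{x_i}\|_{L^{p_i}(2B)}$; these are equivalent reorderings of the same two ingredients.
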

\begin{proof}
By testing the minimality of $u^\varepsilon$ against $U^\varepsilon$, we obtain
\[
\begin{split}
\sum_{i=1}^N \int_B g_{i,\varepsilon}(u^\varepsilon_{x_i})\, dx\le \sum_{i=1}^N \int_B g_{i,\varepsilon}(U^\varepsilon_{x_i})\, dx-\int_B f^\varepsilon\,\big(\zeta^\varepsilon(U^\varepsilon)-\zeta^\varepsilon(u^\varepsilon)\big)\,dx.
\end{split}
\]
The convexity of the function $g_{i,\varepsilon}$ allows to apply Jensen's inequality in connection with the fact that $U^\varepsilon$ is defined by a convolution. This gives
\[
\int_B g_{i,\varepsilon}(U^\varepsilon_{x_i})\, dx\le \int_B  g_{i,\varepsilon}(U_{x_i})\ast \varrho_\varepsilon\, dx\le \int_{2\,B} g_{i,\varepsilon}(U_{x_i})\,dx.
\]
By  using also that $g_{i,\varepsilon}(t)\ge |t|^{p_i}/p_i$ and the $1-$Lipschitz character of $\zeta^\varepsilon$, we get
\[
\begin{split}
\sum_{i=1}^N \frac{1}{p_i}\,\int_B |u^\varepsilon_{x_i}|^{p_i}\, dx&\le \sum_{i=1}^N \int_{2\,B} g_{i,\varepsilon}(U_{x_i})\,dx+\int_B |f^\varepsilon|\,|U^\varepsilon-u^\varepsilon|\,dx\\
&\le \sum_{i=1}^N \int_{2\,B} g_{i,\varepsilon}(U_{x_i})\,dx+C_M\,\|f\|_{L^1(2B)},
\end{split}
\]
where \(C_M\) is a positive constant which only depends on \(M\).
We finally rely on  \eqref{eq464} when \(p_i>2\) or the subadditivity of \(t\mapsto |t|^{\frac{p_i}{2}}\) when \(p_i\le 2\) to obtain 
\begin{equation}
\label{piccolina}
g_{i,\varepsilon}(t)\le \frac{\max\left\{1,2^{\frac{p_i-2}{2}}\right\}}{p_i}\,({\varepsilon}^{\frac{p_i}{2}}+|t|^{p_i}).
\end{equation}
This concludes the proof.
\end{proof}

In view of our scopes, it is mandatory to have a convergence result for the minimizers $\{u^\varepsilon\}_{0<\varepsilon<\varepsilon_0}$. This is the content of the next lemma, which is an extension of \cite[Lemma 2.2]{BB2}.
\begin{lm}[Convergence to a minimizer]
\label{lm:convergence}
With the same notation as above, we have
\begin{equation}
\label{troppoforte!}
\lim_{\varepsilon\to 0} \left[\|u^{\varepsilon}-U\|_{L^{q}(B)}+\sum_{i=1}^N \|(u^\varepsilon-U)_{x_i}\|_{L^{p_i}(B)}\right]=0,
\end{equation}
for every $1\le q< \infty$.
\end{lm}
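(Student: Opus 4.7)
\medskip

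\noindent\textbf{Proof plan.} The plan is to extract a weakly convergent subsequence of $\{u^\varepsilon\}$, identify its limit with $U$ by a standard $\Gamma$-convergence-style argument, and finally upgrade weak to strong convergence of the partial derivatives by matching $L^{p_i}$-norms.

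\medskip

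\noindent\emph{Step 1: compactness.} By Lemma \ref{lm:unif} combined with the uniform $L^\infty$-bound $\|u^\varepsilon\|_{L^\infty(B)}\le M+\varepsilon\le M+\varepsilon_0$ from Lemma \ref{lm:below}, the family $\{u^\varepsilon\}_{0<\varepsilon\le\varepsilon_0}$ is bounded in $W^{1,\mathbf{p}}(B)\cap L^{\infty}(B)$. By Rellich--Kondrachov (applied component-wise, starting from $p_1$) together with interpolation against the uniform $L^\infty$-bound, along some subsequence $\varepsilon_n\to 0$ we have
\[
u^{\varepsilon_n}\rightharpoonup v\ \mbox{ in }W^{1,\mathbf{p}}(B),\qquad u^{\varepsilon_n}\to v\ \mbox{ in }L^q(B)\ \mbox{ for every }1\le q<\infty,
\]
with $v\in L^\infty(B)$ and $\|v\|_{L^\infty(B)}\le M$. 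Moreover, since $U\in W^{1,\mathbf{p}}(2\,B)$, standard properties of mollifiers yield $U^\varepsilon\to U$ in $W^{1,\mathbf{p}}(B)$; thus from $u^\varepsilon-U^\varepsilon\in W^{1,\mathbf{p}}_0(B)$ we obtain $v-U\in W^{1,\mathbf{p}}_0(B)$.

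\medskip

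\noindent\emph{Step 2: identification of the limit.} Testing the minimality of $u^{\varepsilon_n}$ against $U^{\varepsilon_n}$ gives
\[
\mathfrak{F}_{\mathbf{p},\varepsilon_n}(u^{\varepsilon_n};B)\le \mathfrak{F}_{\mathbf{p},\varepsilon_n}(U^{\varepsilon_n};B).
\]
Using \eqref{piccolina} and dominated convergence, one checks that $\mathfrak{F}_{\mathbf{p},\varepsilon_n}(U^{\varepsilon_n};B)\to \mathfrak{F}_{\mathbf{p}}(U;B)$; here the lower-order term converges because $\zeta^{\varepsilon_n}(U^{\varepsilon_n})\to\zeta(U)=U$ uniformly, since $\|U\|_{L^\infty(2\,B)}\le M$. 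On the other hand, weak lower semicontinuity applied to each convex term $\int_B |u^{\varepsilon_n}_{x_i}|^{p_i}/p_i\,dx$, together with $g_{i,\varepsilon}(t)\ge |t|^{p_i}/p_i$ and the fact that $f^{\varepsilon_n}\,\zeta^{\varepsilon_n}(u^{\varepsilon_n})\to f\,\zeta(v)=f\,v$ in $L^1(B)$ (by $L^1$-convergence of $f^{\varepsilon_n}$ and $L^\infty$-convergence of $\zeta^{\varepsilon_n}(u^{\varepsilon_n})$), yield
\[
\mathfrak{F}_{\mathbf{p}}(v;B)\le \liminf_{n\to\infty}\mathfrak{F}_{\mathbf{p},\varepsilon_n}(u^{\varepsilon_n};B)\le \limsup_{n\to\infty}\mathfrak{F}_{\mathbf{p},\varepsilon_n}(u^{\varepsilon_n};B)\le \mathfrak{F}_{\mathbf{p}}(U;B).
\]
Because $v-U\in W^{1,\mathbf{p}}_0(B)$ and $v\in L^\infty(B)$, the local minimality of $U$ implies $\mathfrak{F}_{\mathbf{p}}(U;B)\le\mathfrak{F}_{\mathbf{p}}(v;B)$. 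Hence all inequalities are equalities. Strict convexity of $z\mapsto \sum_i |z_i|^{p_i}/p_i$ (each $p_i>1$) forces $v=U$, so the limit is independent of the subsequence; hence the whole family converges.

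\medskip

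\noindent\emph{Step 3: strong convergence of the derivatives.} From Step 2 we deduce $\mathfrak{F}_{\mathbf{p},\varepsilon}(u^\varepsilon;B)\to\mathfrak{F}_{\mathbf{p}}(U;B)$, which, after subtracting the convergent lower-order terms, gives
\[
\lim_{\varepsilon\to 0}\sum_{i=1}^{N}\int_B g_{i,\varepsilon}(u^\varepsilon_{x_i})\,dx=\sum_{i=1}^{N}\int_B \frac{1}{p_i}|U_{x_i}|^{p_i}\,dx.
\]
Each summand on the left is bounded below by $\int_B |u^\varepsilon_{x_i}|^{p_i}/p_i\,dx$, which is weakly lower semicontinuous, so $\int_B |U_{x_i}|^{p_i}/p_i\,dx \le \liminf \int_B |u^\varepsilon_{x_i}|^{p_i}/p_i\,dx$ for every $i$. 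Since the total sums match in the limit, each summand must converge separately; and since $g_{i,\varepsilon}(t)-|t|^{p_i}/p_i\ge 0$ integrates to zero in the limit (by monotone convergence on $(\varepsilon+t^2)^{p_i/2}-|t|^{p_i}$), we deduce
\[
\lim_{\varepsilon\to 0}\int_B |u^\varepsilon_{x_i}|^{p_i}\,dx=\int_B |U_{x_i}|^{p_i}\,dx,\qquad i=1,\dots,N.
\]
Combined with the weak convergence $u^\varepsilon_{x_i}\rightharpoonup U_{x_i}$ in $L^{p_i}(B)$, the uniform convexity of $L^{p_i}(B)$ (Radon--Riesz) gives strong convergence in $L^{p_i}(B)$, which is the remaining statement.

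\medskip

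\noindent\emph{Main obstacle.} The only delicate point is the separation argument in Step~3: we must isolate each $i$-th piece of the energy so that uniform convexity of $L^{p_i}$ can close the argument. This is handled by weak lower semicontinuity of each individual summand, which forces each of them to converge once the total converges.
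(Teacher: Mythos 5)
Your proof is correct and takes essentially the same route as the paper's: weak compactness and a.e.\ convergence for $u^\varepsilon$, identification of the limit with $U$ by testing minimality against $U^\varepsilon$ combined with lower semicontinuity and strict convexity, upgrade to strong $L^{p_i}$ convergence of the gradient via convergence of norms and Radon--Riesz (with each summand's norm forced to converge because the total does and each is weakly lower semicontinuous), and finally a subsequence-uniqueness argument plus dominated convergence for the $L^q$ part. One minor imprecision in Step~3: the appeal to ``monotone convergence on $(\varepsilon+t^2)^{p_i/2}-|t|^{p_i}$'' does not literally apply because $u^\varepsilon_{x_i}$ itself varies with $\varepsilon$ (and the pointwise bound $(\varepsilon+t^2)^{p_i/2}-|t|^{p_i}\le\varepsilon^{p_i/2}$ fails for $p_i>2$, a case Lemma~\ref{lm:convergence} must still cover); however, the conclusion $\lim\int_B|u^\varepsilon_{x_i}|^{p_i}\,dx=\int_B|U_{x_i}|^{p_i}\,dx$ already follows from the chain $\int_B|U_{x_i}|^{p_i}/p_i\,dx\le\liminf\int_B|u^\varepsilon_{x_i}|^{p_i}/p_i\,dx\le\limsup\int_B g_{i,\varepsilon}(u^\varepsilon_{x_i})\,dx=\int_B|U_{x_i}|^{p_i}/p_i\,dx$, which you had already set up, so the gap is only presentational.
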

\begin{proof}
The proof goes as in \cite[Lemma 2.2]{BB2}. We repeat the argument, since this gives us the occasion to fix some missing details in \cite{BB2}.
By using the uniform estimate of Lemma \ref{lm:unif} and the definition of $U^\varepsilon$, we get that $\{u^\varepsilon-U^\varepsilon\}_{0<\varepsilon\le \varepsilon_0}$ is a bounded family in $W^{1,\mathbf{p}}_0(B)$. 
Thanks to Lemma \ref{lm:below}, we also have that $\{u^\varepsilon-U^{\varepsilon}\}_{0<\varepsilon\le \varepsilon_0}$ is a bounded family in $L^\infty(B)$.
From those two facts, we can infer the existence of an infinitesimal sequence $\{\varepsilon_k\}_{k\in\mathbb{N}}\subset(0,\varepsilon_0]$ such that 
$\{u^{\varepsilon_k}-U^{\varepsilon_k}\}_{k\in \mathbb{N}}\) converges weakly in $W_0^{1,\mathbf{p}}(B)$ and almost everywhere to a function $\phi\in W_0^{1,\mathbf{p}}(B)\cap L^\infty(B)$,  in the sense that 
\[
\lim_{k\to\infty} \int_B (u^{\varepsilon_k}-U^{\varepsilon_k})\,\varphi\,dx=\int_B \phi\,\varphi\,dx,\qquad \mbox{ for every } \varphi\in L^1(B),
\] 
and
\[
\lim_{k\to\infty} \int_B (u^{\varepsilon_k}-U^{\varepsilon_k})_{x_i}\,\varphi\,dx=\int_B \phi_{x_i}\,\varphi\,dx,\qquad \mbox{ for every }\varphi\in L^{p_i'}(B), i=1,\dots, N.
\]
By recalling that $U^{\varepsilon_k}$ has been constructed by convolution, we also have that it converges strongly in $W^{1,\mathbf{p}}(B)$ and almost everywhere to $U$. This permits to conclude that $\{u^{\varepsilon_k}\}_{k\in\mathbb{N}}$ converges weakly and almost everywhere to $u:=\phi+U$. 
We need to prove that actually $u=U$. With this aim, we test the minimality of each $u^{\varepsilon_k}$ against the function $U^{\varepsilon_k}$. Thus, by lower semicontinuity of the $L^{p_i}$ norms, we can infer 
\begin{equation}
\label{maroc}
\begin{split}
\sum_{i=1}^N\frac{1}{p_i}\, \int_B |u_{x_i}|^{p_i}\, dx&\le\liminf_{k\to \infty}\sum_{i=1}^N\frac{1}{p_i}\, \int_B |u^{\varepsilon_k}_{x_i}|^{p_i}\, dx\\
&\le \liminf_{k\to \infty}\sum_{i=1}^N\frac{1}{p_i}\, \int_B g_{i,\varepsilon_k}(u^{\varepsilon_k}_{x_i})\, dx\\
&\le \lim_{k\to \infty}\sum_{i=1}^N \frac{1}{p_i}\,\int_B g_{i,\varepsilon_k}(U^{\varepsilon_k}_{x_i})\, dx-\int_B f^{\varepsilon_k}\,\big(\zeta^{\varepsilon_k}(U^{\varepsilon_k})-\zeta^{\varepsilon_k}(u^{\varepsilon_k})\big)\,dx\\
&=\sum_{i=1}^N \frac{1}{p_i}\,\int_B |U_{x_i}|^{p_i}\, dx-\int_B f\,(\zeta(U)-\zeta(u))\,dx.
\end{split}
\end{equation}
Observe that for the convergence of the lower order term, we used that $f^{\varepsilon_k}$ converges strongly in \(L^1(B)\), that $U^{\varepsilon_k}$ and $u^{\varepsilon_k}$ are equibounded in $L^\infty(B)$ and converge almost everywhere  to $U$ and $u$ respectively and that $\zeta^{\varepsilon_k}$   converges uniformly to the Lipschitz function $\zeta$. This shows that 
\[
\sum_{i=1}^N\frac{1}{p_i}\, \int_B |u_{x_i}|^{p_i}\, dx -\int_{\Omega'} f\,\zeta(u)\,dx
\leq \sum_{i=1}^N\frac{1}{p_i}\, \int_B |U_{x_i}|^{p_i}\, dx -\int_{\Omega'} f\,\zeta(U)\,dx.
\]
The $L^\infty$-boundedness of $u^\varepsilon$ proved in Lemma \ref{lm:below}, gives that  \(\|u\|_{L^{\infty}(B)}\leq M\). A similar estimate holds for $U$  by assumption. Since \(\zeta(t)=t\) for every \(t\in [-M,M]\), one gets
\[
\mathfrak{F}_{\mathbf{p}}(u;B)\leq \mathfrak{F}_{\mathbf{p}}(U;B). 
\] 
By the strict convexity of the functional $\mathfrak{F}_\mathbf{p}$, the minimizer must be unique and thus we get $u=U$, as desired.
\par
In order to prove \eqref{troppoforte!}, we can adapt the argument of \cite[Lemma 2.2]{BB2}. By \eqref{maroc} we get
\begin{equation}
\label{norms}
\lim_{k\to \infty} \sum_{i=1}^N \frac{1}{p_i}\,\int_B \left|u^{\varepsilon_k}_{x_i}\right|^{p_i}\,dx=\sum_{i=1}^N \frac{1}{p_i}\,\int_B \left|U_{x_i}\right|^{p_i}\,dx.
\end{equation}
For every \(i=1,\dots,N\), we rely on the lower semicontinuity of the $L^{p_i}$ norm to get
\[
\liminf_{k\to \infty} \int_{B} \left| u^{\varepsilon_k}_{x_i}\right|^{p_i}\,dx \geq \int_{B} \left|U_{x_i}\right|^{p_i}\,dx, 
\]
In connection with \eqref{norms}, this implies that 
\[
\lim_{k\to \infty} \int_B \left|u^{\varepsilon_k}_{x_i}\right|^{p_i}\,dx=\int_B \left|U_{x_i}\right|^{p_i}\,dx.
\]
The convergence of the norms, in conjunction with the weak convergence, permits to infer that \((u^{\varepsilon_k}_{x_i})_{k\in \mathbb{N}}\) converges to \(U_{x_i}\) in \(L^{p_i}(B)\) for every \(i=1, \dots, N\) (see for example \cite[Theorem 2.11]{LL}).
\par
Moreover, since $\{u^{{\varepsilon}_k}\}_{k\in\mathbb{N}}$ is bounded by $M$ and converges almost everywhere in $B$ to $U$, the dominated convergence theorem implies that \((u^{\varepsilon_k})_{k\in \mathbb{N}}\) converges to \(U\) in \(L^{q}(B)\) for every \(1\leq q<\infty\). 
\par
Finally, we observe that we can repeat this argument with any subsequence of the original family $\{u^{\varepsilon}\}_{\varepsilon>0}$. Thus the above limit holds true for the whole family $\{u^{\varepsilon}\}_{0<\varepsilon\le \varepsilon_0}$ instead of $\{u^{\varepsilon_k}\}_{k\in \mathbb{N}}$ and \eqref{troppoforte!} follows.
\end{proof}

The following technical result is classical in the  Regularity Theory. This is taken from \cite[Lemma 6.1]{Gi} and we state it here for the reader's convenience.
\begin{lm}
\label{lm:giusti}
Let $0<r<R$ and let $Z:[r,R]\to [0,\infty)$ be a bounded function. Assume that for $r\le s<t\le R$ we have
\[
Z(s)\le \frac{\mathcal{A}}{(t-s)^{\alpha_0}}+\frac{\mathcal{B}}{(t-s)^{\beta_0}}+\mathcal{C}+\vartheta\,Z(t),
\]
with $\mathcal{A},\mathcal{B},\mathcal{C}\ge 0$, $\alpha_0\ge \beta_0>0$ and $0\le \vartheta<1$. Then we have
\[
Z(r)\le \left(\frac{1}{(1-\lambda)^{\alpha_0}}\,\frac{\lambda^{\alpha_0}}{\lambda^{\alpha_0}-\vartheta}\right)\,\left[\frac{\mathcal{A}}{(R-r)^{\alpha_0}}+\frac{\mathcal{B}}{(R-r)^{\beta_0}}+\mathcal{C}\right],
\]
where $\lambda$ is any number such that $\vartheta^\frac{1}{\alpha_0}<\lambda<1$.
\end{lm}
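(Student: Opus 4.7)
The statement is the classical hole-filling / iteration lemma of Giusti, so I would reproduce the standard Moser-style argument built on a geometrically converging sequence of radii. The plan is to build a sequence $r=t_0<t_1<t_2<\dots$ increasing to $R$ with geometrically shrinking gaps $t_{n+1}-t_n$, iterate the hypothesis along this sequence, and sum the resulting geometric series, using the hypothesis $\lambda>\vartheta^{1/\alpha_0}$ precisely to guarantee convergence.

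Concretely, fix $\lambda\in(\vartheta^{1/\alpha_0},1)$ and set $t_n:=r+(1-\lambda^n)(R-r)$, so that $t_0=r$, $t_n\nearrow R$, and
\[
t_{n+1}-t_n=(1-\lambda)\,\lambda^n\,(R-r).
\]
Applying the assumed inequality with $s=t_n$, $t=t_{n+1}$ yields
\[
Z(t_n)\le \frac{\mathcal{A}}{((1-\lambda)(R-r))^{\alpha_0}}\,\lambda^{-n\alpha_0}+\frac{\mathcal{B}}{((1-\lambda)(R-r))^{\beta_0}}\,\lambda^{-n\beta_0}+\mathcal{C}+\vartheta\, Z(t_{n+1}).
\]
Iterating $n$ times starting from $n=0$, and using that $Z$ is bounded so that $\vartheta^n\,Z(t_n)\to 0$, I obtain
\[
Z(r)\le \frac{\mathcal{A}}{((1-\lambda)(R-r))^{\alpha_0}}\sum_{k=0}^\infty\left(\frac{\vartheta}{\lambda^{\alpha_0}}\right)^{\!k}+\frac{\mathcal{B}}{((1-\lambda)(R-r))^{\beta_0}}\sum_{k=0}^\infty\left(\frac{\vartheta}{\lambda^{\beta_0}}\right)^{\!k}+\mathcal{C}\sum_{k=0}^\infty \vartheta^k.
\]

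The condition $\lambda^{\alpha_0}>\vartheta$ guarantees that the first geometric series converges to $\lambda^{\alpha_0}/(\lambda^{\alpha_0}-\vartheta)$. For the other two series I would note that $0<\lambda<1$ and $\beta_0\le \alpha_0$ imply $\lambda^{\beta_0}\ge \lambda^{\alpha_0}$, hence the second series is bounded by the first one, and similarly $\sum\vartheta^k=1/(1-\vartheta)\le \lambda^{\alpha_0}/(\lambda^{\alpha_0}-\vartheta)$ since $\lambda^{\alpha_0}<1$. Using the same kind of monotonicity $(1-\lambda)^{\beta_0}\ge (1-\lambda)^{\alpha_0}$ to replace the $\mathcal{B}$-factor by the larger constant $1/(1-\lambda)^{\alpha_0}$, the three terms collapse into the single prefactor $\bigl((1-\lambda)^{-\alpha_0}\bigr)\bigl(\lambda^{\alpha_0}/(\lambda^{\alpha_0}-\vartheta)\bigr)$ in front of the bracket, which is exactly the bound claimed.

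There is really no hard step here; the only subtlety is choosing $\lambda$ strictly above $\vartheta^{1/\alpha_0}$ so that $\vartheta/\lambda^{\alpha_0}<1$ and the geometric series converges, together with the bookkeeping that replaces $(1-\lambda)^{-\beta_0}$ and $(1-\vartheta)^{-1}$ by the larger, more uniform constants appearing in the final statement. The boundedness of $Z$ on $[r,R]$ is used exactly once, to discard the tail $\vartheta^n Z(t_n)$ when passing to the limit in the iteration.
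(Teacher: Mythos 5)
Your argument is correct, and it is the standard iteration proof; the paper does not reproduce a proof of this lemma, citing it verbatim from \cite[Lemma~6.1]{Gi}, where essentially the same geometric-radii iteration appears. All the inequalities you invoke to collapse the three geometric series into the single prefactor $\bigl((1-\lambda)^{-\alpha_0}\bigr)\bigl(\lambda^{\alpha_0}/(\lambda^{\alpha_0}-\vartheta)\bigr)$ are valid, since $0<\lambda<1$, $\beta_0\le\alpha_0$, and $\vartheta\ge 0$ give $\lambda^{\beta_0}\ge\lambda^{\alpha_0}$, $(1-\lambda)^{-\beta_0}\le(1-\lambda)^{-\alpha_0}$, and $b/(b-\vartheta)\le a/(a-\vartheta)$ whenever $b\ge a>\vartheta$.
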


\section{Caccioppoli--type inequalities for the gradient}\label{caccio}
Throughout this section, we will assume that $1< p_1\le \dots \le p_N$, without any further restriction.
In what follows, we will use the following function
\begin{equation}
\label{Ggrande}
\mathcal{G}_\varepsilon(z)=\Big((G_\varepsilon(z)-1)_++1\Big),\qquad \mbox{ for every } z\in\mathbb{R}^N,
\end{equation}
where $G_\varepsilon$ is the same function as in \eqref{G}.
\begin{prop}[Caccioppoli inequality for power functions of the gradient]
\label{prop:caccioespilon2}
Let $1<p_1\le p_2\le \dots\le p_N<\infty$. For every $\alpha\ge 0$ and every non-negative $\eta\in C^2_0(B)$, we have
\begin{equation}
\label{caccioppola}
\begin{split}
\sum_{i=1}^N\int g''_{i,\varepsilon}(u_{x_i}^\varepsilon)&\,\left|\Big(\mathcal{G}_\varepsilon(\nabla u^\varepsilon)^\frac{\alpha+1}{2}\Big)_{x_i}\right|^2\,\eta^2\,dx\\&\le C\,(\alpha+1)^2\int \mathcal{G}_\varepsilon(\nabla u^\varepsilon)^{\alpha+2-\frac{2}{p_N}}\,\Big(|\nabla\eta|^2+\eta\,|D^2 \eta|\Big)\,dx\\
&+C\,(\alpha+1)^2\,\int |f^\varepsilon|^2\, \mathcal{G}_\varepsilon(\nabla u^\varepsilon)^\alpha\,\eta^2\,dx,
\end{split}
\end{equation}
for some $C=C(N,p_1,p_N)>0$.
\end{prop}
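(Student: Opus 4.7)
The plan is to test the differentiated Euler--Lagrange equation \eqref{Eulerdiff} for $u^\varepsilon_{x_k}$ with the non-linear function
\[
\psi_k := g'_{k,\varepsilon}(u^\varepsilon_{x_k})\,\mathcal{G}_\varepsilon(\nabla u^\varepsilon)^\alpha\,\eta^2,
\]
and sum over $k=1,\dots,N$. Using the chain-rule identity $\mathcal{G}_{\varepsilon,x_i}=\sum_k g'_{k,\varepsilon}(u^\varepsilon_{x_k})\,u^\varepsilon_{x_k x_i}$ valid a.e.\ on $\{G_\varepsilon>1\}$, the left-hand side of the tested equation splits into three pieces: a ``good'' term
\[
I_1=\int\mathcal{G}_\varepsilon(\nabla u^\varepsilon)^\alpha\,\Big(\sum_{i,k}g''_{i,\varepsilon}(u^\varepsilon_{x_i})\,g''_{k,\varepsilon}(u^\varepsilon_{x_k})\,(u^\varepsilon_{x_k x_i})^2\Big)\,\eta^2\,dx;
\]
a term $I_2=\frac{4\alpha}{(\alpha+1)^2}\,\mathcal{I}$ coming from differentiating $\mathcal{G}_\varepsilon^\alpha$, where $\mathcal{I}$ denotes the left-hand side of \eqref{caccioppola}; and a mixed term $I_3$ carrying the factor $\eta\,\eta_{x_i}$. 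A key preliminary reduction is $\mathcal{I}\leq C\,(\alpha+1)^2\,I_1$, obtained by Cauchy--Schwarz and the pointwise bound $\sum_k|g'_{k,\varepsilon}|^2/g''_{k,\varepsilon}\leq C\,G_\varepsilon$ (which follows from \eqref{g4}--\eqref{g4b}).

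The hardest step will be the estimate of $I_3$, because a direct Cauchy--Schwarz would leave the weight $g''_{i,\varepsilon}(u^\varepsilon_{x_i})$ on the right-hand side, which cannot be controlled uniformly in the subquadratic regime (it blows up as $u^\varepsilon_{x_i}\to 0$). The remedy is an integration-by-parts trick in the spirit of Naumann: exploiting
\[
g''_{i,\varepsilon}(u^\varepsilon_{x_i})\,u^\varepsilon_{x_k x_i}=\partial_{x_k}\big(g'_{i,\varepsilon}(u^\varepsilon_{x_i})\big),
\]
we integrate by parts in $x_k$ inside $I_3$ and transfer the derivative onto $g'_{k,\varepsilon}(u^\varepsilon_{x_k})\,\mathcal{G}_\varepsilon^\alpha\,\eta\,\eta_{x_i}$. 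This produces four sub-terms featuring only the first derivatives $g'_{i,\varepsilon}$, to which Lemma \ref{lm:majorette} applies, yielding $|g'_{i,\varepsilon}|\,|g'_{k,\varepsilon}|\leq C\,\mathcal{G}_\varepsilon^{2-2/p_N}$ and hence precisely the announced exponent $\alpha+2-2/p_N$. Two sub-terms, coming from differentiating $\eta$ or $\eta_{x_i}$, directly give the $|\nabla\eta|^2$ and $\eta\,|D^2\eta|$ weights. The sub-term carrying $\sum_k\partial_{x_k}(g'_{k,\varepsilon}(u^\varepsilon_{x_k}))$ is, by the non-differentiated equation \eqref{nondiff}, equal to $-f^\varepsilon\,(\zeta^\varepsilon)'(u^\varepsilon)$, which after Young's inequality produces the $\int |f^\varepsilon|^2\,\mathcal{G}_\varepsilon^\alpha\,\eta^2\,dx$ contribution (and an additional piece matching $\mathcal{G}_\varepsilon^{\alpha+2-2/p_N}\,|\nabla\eta|^2$). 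The fourth sub-term, carrying $\alpha\,\mathcal{G}_\varepsilon^{\alpha-1}\,\mathcal{G}_{\varepsilon,x_k}$, is recast as $\frac{2\alpha}{\alpha+1}\,\mathcal{G}_\varepsilon^{(\alpha-1)/2}\,(\mathcal{G}_\varepsilon^{(\alpha+1)/2})_{x_k}$, and a Cauchy--Schwarz/Young procedure with an $\alpha$-dependent weight $\delta\sim\alpha/(\alpha+1)^2$ produces a main piece proportional to $\mathcal{G}_\varepsilon^{\alpha+2-2/p_N}\,|\nabla\eta|^2$ plus a residual $\mathcal{I}$-piece that is absorbed into $I_2$. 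A careful tracking of the $\alpha$-dependent constants in this step is what delivers the announced $(\alpha+1)^2$ factor on the right-hand side of \eqref{caccioppola}.

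Finally, the right-hand side $-\sum_k\int f^\varepsilon\,(\zeta^\varepsilon)'(u^\varepsilon)\,(\psi_k)_{x_k}\,dx$ of the tested equation (obtained from \eqref{Eulerdiff} after one integration by parts in $x_k$) is handled analogously: distributing $\partial_{x_k}$ on $\psi_k$, bounding each factor with $|(\zeta^\varepsilon)'|\leq 1$, \eqref{g4}--\eqref{g4b} and Lemma \ref{lm:majorette}, and applying Cauchy--Schwarz followed by Young's inequality, we absorb part of the resulting quantity into $\mathcal{I}$ (equivalently into $I_1$ through the reduction), while the residual produces the $\int |f^\varepsilon|^2\,\mathcal{G}_\varepsilon^\alpha\,\eta^2\,dx$ term. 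Gathering all estimates and invoking $\mathcal{I}\leq C\,(\alpha+1)^2\,I_1$ yields the Caccioppoli inequality \eqref{caccioppola}.
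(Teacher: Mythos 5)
Your plan matches the paper's argument in all essential respects: the test function $\psi_k=g'_{k,\varepsilon}(u^\varepsilon_{x_k})\,\mathcal{G}_\varepsilon^\alpha\,\eta^2$ is exactly the paper's $F(G_\varepsilon)\,g'_{k,\varepsilon}\,\eta^2$ with $F(t)=((t-1)_++1)^\alpha$; the decomposition into $I_1$ (the convex Hessian term $\mathcal{I}_2$ in the paper's notation), $I_2$ (the paper's $\mathcal{I}_1$), and $I_3$; the Naumann integration-by-parts trick on $I_3$; the use of \eqref{nondiff} to replace $\sum_k(g'_{k,\varepsilon}(u^\varepsilon_{x_k}))_{x_k}$ by $-f^\varepsilon(\zeta^\varepsilon)'(u^\varepsilon)$; and Lemma \ref{lm:majorette} to obtain the exponent $\alpha+2-2/p_N$ are all present in the paper's proof. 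The reduction $\mathcal{I}\leq C(\alpha+1)^2\,I_1$ is correct and is precisely how the paper gets from $\mathcal{I}_2$ down to the left-hand side of \eqref{caccioppola}.

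There is, however, one place where the proposal as written does not deliver the stated constant $C(\alpha+1)^2$. In the final absorption of the $f^\varepsilon$ piece you write that one absorbs "into $\mathcal{I}$ (equivalently into $I_1$ through the reduction)," and the conclusion is drawn by "invoking $\mathcal{I}\leq C(\alpha+1)^2 I_1$." Both moves lose a power of $\alpha+1$. Concretely, if you absorb a fraction $\delta\,\mathcal{I}$ into $I_1$ via the inequality $\mathcal{I}\leq C(\alpha+1)^2 I_1$, you are forced to take $\delta\lesssim (\alpha+1)^{-2}$, and the Young residual then carries the reciprocal factor $(\alpha+1)^2$, ultimately producing $(\alpha+1)^4$ on the $|f^\varepsilon|^2$-term instead of $(\alpha+1)^2$. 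Likewise, in the last line, passing from $I_1+I_2\leq \mathrm{RHS}$ to $\mathcal{I}$ solely through $I_1\gtrsim \mathcal{I}/(\alpha+1)^2$ turns a right-hand side of size $\sim(\alpha+1)$ into $\sim(\alpha+1)^3$. The paper avoids both losses by (i) absorbing the $f^\varepsilon$-related $\mathcal{I}$-piece into $\mathcal{I}_1$ (your $I_2$), using a weighted Cauchy--Schwarz that keeps the factor $F'(G_\varepsilon)=\alpha\mathcal{G}_\varepsilon^{\alpha-1}$ on both halves of the Young split, so that the residual carries only one factor of $\alpha$; and (ii) combining the exact identity $I_2=\frac{4\alpha}{(\alpha+1)^2}\mathcal{I}$ with the lower bound on $I_1$, which together yield $I_1+I_2\gtrsim \frac{\max(\alpha,1)}{(\alpha+1)^2}\,\mathcal{I}\gtrsim \frac{1}{\alpha+1}\,\mathcal{I}$ — one power better than $I_1$ alone. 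Since you already recorded the identity for $I_2$, the fix is minor: absorb the $f^\varepsilon$ piece into $I_2$ rather than $I_1$, and use $I_1+I_2\gtrsim \mathcal{I}/(\alpha+1)$ at the end.
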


\begin{proof}
We are going to use a trick based on integration by parts, taken from \cite[Theorem 1]{Na}
 (see also \cite{FF}). This permits to circumvent the use of the upper bound on the Hessian of the function $G_\varepsilon$.
We start by fixing $k\in\{1,\dots,N\}$ and inserting in \eqref{Eulerdiff} the test function 
\[
\varphi=F(G_{\varepsilon}(\nabla u^\varepsilon))\,g'_{k,\varepsilon}(u^\varepsilon_{x_k})\,\eta^2,
\] 
where $F$ is a non-negative $C^1$ monotone non-decreasing function, that will be specified later on. This is a feasible test function, thanks to the regularity of $u^\varepsilon$.
Thus we get 
\begin{equation}
\label{1}
\begin{split}
\sum_{i=1}^N \int g_{i,\varepsilon}''(u^\varepsilon_{x_i})\,u^{\varepsilon}_{x_k\,x_i}&\Big(F(G_{\varepsilon}(\nabla u^\varepsilon))\Big)_{x_i}\,g'_{k,\varepsilon}(u^\varepsilon_{x_k})\,\eta^2\,dx\\
&+\sum_{i=1}^N\int g_{i,\varepsilon}''(u^\varepsilon_{x_i})\,u^{\varepsilon}_{x_k\,x_i}\,F(G_{\varepsilon}(\nabla u^\varepsilon))\,\Big(g'_{k,\varepsilon}(u^\varepsilon_{x_k})\Big)_{x_i}\,\eta^2\,dx\\
&=-2\,\sum_{i=1}^N\int g_{i,\varepsilon}''(u^\varepsilon_{x_i})\,u^{\varepsilon}_{x_k\,x_i}\,F(G_{\varepsilon}(\nabla u^\varepsilon))\,g'_{k,\varepsilon}(u^\varepsilon_{x_k})\,\eta\,\eta_{x_i}\,dx\\
&+\int (f^{\varepsilon}(\zeta^{\varepsilon})'(u^{\varepsilon}))_{x_k}\,F(G_{\varepsilon}(\nabla u^\varepsilon))\,g'_{k,\varepsilon}(u^\varepsilon_{x_k})\,\eta^2\,dx.
\end{split}
\end{equation}
We observe that 
\[
g_{i,\varepsilon}''(u^\varepsilon_{x_i})\,u^{\varepsilon}_{x_k\,x_i}=\Big(g_{i,\varepsilon}'(u^\varepsilon_{x_i})\Big)_{x_k}.
\]
Then by integrating by parts on the right-hand side\footnote{This is the trick in \cite{Na} (and \cite{FF}) mentioned above.} of \eqref{1}, we obtain
\begin{equation}
\label{2}
\begin{split}
\sum_{i=1}^N &\int g_{i,\varepsilon}''(u^\varepsilon_{x_i})\,u^{\varepsilon}_{x_k\,x_i}\Big(F(G_{\varepsilon}(\nabla u^\varepsilon))\Big)_{x_i}\,g'_{k,\varepsilon}(u^\varepsilon_{x_k})\,\eta^2\,dx\\
&+\sum_{i=1}^N\int g_{i,\varepsilon}''(u^\varepsilon_{x_i})\,u^{\varepsilon}_{x_k\,x_i}\,F(G_{\varepsilon}(\nabla u^\varepsilon))\,\Big(g'_{k,\varepsilon}(u^\varepsilon_{x_k})\Big)_{x_i}\,\eta^2\,dx\\
&=2\,\sum_{i=1}^N\int g_{i,\varepsilon}'(u^\varepsilon_{x_i})\,\Big(F(G_{\varepsilon}(\nabla u^\varepsilon))\Big)_{x_k}\,g'_{k,\varepsilon}(u^\varepsilon_{x_k})\,\eta\,\eta_{x_i}\,dx\\
&+2\,\sum_{i=1}^N\int g_{i,\varepsilon}'(u^\varepsilon_{x_i})\,F(G_{\varepsilon}(\nabla u^\varepsilon))\,\Big(g'_{k,\varepsilon}(u^\varepsilon_{x_k})\Big)_{x_k}\,\eta\,\eta_{x_i}\,dx\\
&+2\,\sum_{i=1}^N\int g_{i,\varepsilon}'(u^\varepsilon_{x_i})\,F(G_{\varepsilon}(\nabla u^\varepsilon))\,g'_{k,\varepsilon}(u^\varepsilon_{x_k})\,(\eta_{x_k}\,\eta_{x_i}+\eta\,\eta_{x_i\,x_k})\,dx\\
&+\int (f^{\varepsilon}(\zeta^{\varepsilon})'(u^{\varepsilon}))_{x_k}\,F(G_{\varepsilon}(\nabla u^\varepsilon))\,g'_{k,\varepsilon}(u^\varepsilon_{x_k})\,\eta^2\,dx.
\end{split}
\end{equation}
This is valid for every $k=1,\dots,N$, we then take the sum over $k$.

On the left-hand side, the first term then becomes
\[
\begin{split}
&\sum_{i,k=1}^N \int g_{i,\varepsilon}''(u^\varepsilon_{x_i})\,u^{\varepsilon}_{x_k\,x_i}\Big(F(G_{\varepsilon}(\nabla u^\varepsilon))\Big)_{x_i}\,g'_{k,\varepsilon}(u^\varepsilon_{x_k})\,\eta^2\,dx\\
&=\sum_{i,k=1}^N \int g_{i,\varepsilon}''(u^\varepsilon_{x_i})\,\Big(F(G_{\varepsilon}(\nabla u^\varepsilon))\Big)_{x_i}\,\Big(g_{k,\varepsilon}(u^\varepsilon_{x_k})\Big)_{x_i}\,\eta^2\,dx\\
&=\sum_{i=1}^N \int g_{i,\varepsilon}''(u^\varepsilon_{x_i})\,\Big(F(G_{\varepsilon}(\nabla u^\varepsilon))\Big)_{x_i}\,\left(\sum_{k=1}^Ng_{k,\varepsilon}(u^\varepsilon_{x_k})\right)_{x_i}\,\eta^2\,dx\\
&=\sum_{i=1}^N \int g_{i,\varepsilon}''(u^\varepsilon_{x_i})\,\Big(F(G_{\varepsilon}(\nabla u^\varepsilon))\Big)_{x_i}\,\Big(G_{\varepsilon}(\nabla u^\varepsilon)\Big)_{x_i}\,\eta^2\,dx\\
&=\sum_{i=1}^N \int g_{i,\varepsilon}''(u^\varepsilon_{x_i})\,F'(G_{\varepsilon}(\nabla u^\varepsilon))\,\left|\Big(G_{\varepsilon}(\nabla u^\varepsilon)\Big)_{x_i}\right|^2\,\eta^2\,dx.
\end{split}
\]
For the second term of the left-hand side in \eqref{2}, we observe that
\[
\begin{split}
\sum_{i,k=1}^N&\int g_{i,\varepsilon}''(u^\varepsilon_{x_i})\,u^{\varepsilon}_{x_k\,x_i}\,F(G_{\varepsilon}(\nabla u^\varepsilon))\,\Big(g'_{k,\varepsilon}(u^\varepsilon_{x_k})\Big)_{x_i}\,\eta^2\,dx\\
&=\sum_{i,k=1}^N\int g_{i,\varepsilon}''(u^\varepsilon_{x_i})\,|u^{\varepsilon}_{x_k\,x_i}|^2\,F(G_{\varepsilon}(\nabla u^\varepsilon))\,g''_{k,\varepsilon}(u^\varepsilon_{x_k})\,\eta^2\,dx,
\end{split}
\]
and this is non-negative, since each $g_{k,\varepsilon}$ is convex and $F\ge 0$.
We thus obtain
\begin{equation}
\label{3bis}
\begin{split}
\sum_{i=1}^N &\int g_{i,\varepsilon}''(u^\varepsilon_{x_i})\,F'(G_{\varepsilon}(\nabla u^\varepsilon))\,\left|\Big(G_{\varepsilon}(\nabla u^\varepsilon)\Big)_{x_i}\right|^2\,\eta^2\,dx\\
&+\sum_{i,k=1}^N\int g_{i,\varepsilon}''(u^\varepsilon_{x_i})\,|u^{\varepsilon}_{x_k\,x_i}|^2\,F(G_{\varepsilon}(\nabla u^\varepsilon))\,g''_{k,\varepsilon}(u^\varepsilon_{x_k})\,\eta^2\,dx\\
&=2\,\sum_{i,k=1}^N\int g_{i,\varepsilon}'(u^\varepsilon_{x_i})\,\Big(F(G_{\varepsilon}(\nabla u^\varepsilon))\Big)_{x_k}\,g'_{k,\varepsilon}(u^\varepsilon_{x_k})\,\eta\,\eta_{x_i}\,dx\\
&+2\,\sum_{i,k=1}^N\int g_{i,\varepsilon}'(u^\varepsilon_{x_i})\,F(G_{\varepsilon}(\nabla u^\varepsilon))\,\Big(g'_{k,\varepsilon}(u^\varepsilon_{x_k})\Big)_{x_k}\,\eta\,\eta_{x_i}\,dx\\
&+2\,\sum_{i,k=1}^N\int g_{i,\varepsilon}'(u^\varepsilon_{x_i})\,F(G_{\varepsilon}(\nabla u^\varepsilon))\,g'_{k,\varepsilon}(u^\varepsilon_{x_k})\,(\eta_{x_k}\,\eta_{x_i}+\eta\,\eta_{x_i\,x_k})\,dx\\
&+\sum_{k=1}^N\int (f^{\varepsilon}(\zeta^{\varepsilon})'(u^{\varepsilon}))_{x_k}\,F(G_{\varepsilon}(\nabla u^\varepsilon))\,g'_{k,\varepsilon}(u^\varepsilon_{x_k})\,\eta^2\,dx.
\end{split}
\end{equation}
By \eqref{nondiff}, the second  term of the right-hand side can be written as
\begin{equation}\label{eq903}
-2\,\sum_{i=1}^N\int g_{i,\varepsilon}'(u^\varepsilon_{x_i})\,F(G_{\varepsilon}(\nabla u^\varepsilon))\,f^\varepsilon (\zeta^\varepsilon)'(u^\varepsilon)\,\eta\,\eta_{x_i}\,dx.
\end{equation}
By an integration by parts and  \eqref{nondiff} again, the last term on the right-hand side of \eqref{3bis} is equal to
\[
\begin{split}
-\sum_{k=1}^N&\int f^\varepsilon (\zeta^\varepsilon)'(u^\varepsilon)\,\Big(F(G_{\varepsilon}(\nabla u^\varepsilon))\Big)_{x_k}\,g'_{k,\varepsilon}(u^\varepsilon_{x_k})\,\eta^2\,dx\\
&+\int |f^\varepsilon (\zeta^\varepsilon)'(u^\varepsilon)|^2\,F(G_{\varepsilon}(\nabla u^\varepsilon))\,\eta^2\,dx\\
&-2\,\sum_{k=1}^N \int f^\varepsilon (\zeta^\varepsilon)'(u^\varepsilon)\,F(G_{\varepsilon}(\nabla u^\varepsilon))\,g'_{k,\varepsilon}(u^\varepsilon_{x_k})\,\eta\,\eta_{x_k}\,dx.
\end{split}
\]
We observe that the third term in the above sum  is equal to the quantity in \eqref{eq903}.
Hence,  \eqref{3bis} is equivalent to
\begin{equation}
\label{4bis}
\begin{split}
\mathcal{I}_1+\mathcal{I}_2&:=\sum_{i=1}^N \int g_{i,\varepsilon}''(u^\varepsilon_{x_i})\,F'(G_{\varepsilon}(\nabla u^\varepsilon))\,\left|\Big(G_{\varepsilon}(\nabla u^\varepsilon)\Big)_{x_i}\right|^2\,\eta^2\,dx\\
&+\sum_{i,k=1}^N\int g_{i,\varepsilon}''(u^\varepsilon_{x_i})\,|u^{\varepsilon}_{x_k\,x_i}|^2\,F(G_{\varepsilon}(\nabla u^\varepsilon))\,g''_{k,\varepsilon}(u^\varepsilon_{x_k})\,\eta^2\,dx\\
&=2\,\sum_{i,k=1}^N\int g_{i,\varepsilon}'(u^\varepsilon_{x_i})\,\Big(F(G_{\varepsilon}(\nabla u^\varepsilon))\Big)_{x_k}\,g'_{k,\varepsilon}(u^\varepsilon_{x_k})\,\eta\,\eta_{x_i}\,dx\\
&-4\,\sum_{i=1}^N\int g_{i,\varepsilon}'(u^\varepsilon_{x_i})\,F(G_{\varepsilon}(\nabla u^\varepsilon))\,f^\varepsilon (\zeta^\varepsilon)'(u^\varepsilon)\,\eta\,\eta_{x_i}\,dx\\
&+2\,\sum_{i,k=1}^N\int g_{i,\varepsilon}'(u^\varepsilon_{x_i})\,F(G_{\varepsilon}(\nabla u^\varepsilon))\,g'_{k,\varepsilon}(u^\varepsilon_{x_k})\,(\eta_{x_k}\,\eta_{x_i}+\eta\,\eta_{x_i\,x_k})\,dx\\
&-\sum_{k=1}^N\int f^\varepsilon (\zeta^\varepsilon)'(u^\varepsilon)\,\Big(F(G_{\varepsilon}(\nabla u^\varepsilon))\Big)_{x_k}\,g'_{k,\varepsilon}(u^\varepsilon_{x_k})\,\eta^2\,dx\\
&+\int |f^\varepsilon (\zeta^\varepsilon)'(u^\varepsilon)|^2\,F(G_{\varepsilon}(\nabla u^\varepsilon))\,\eta^2\,dx=:2\,\mathcal{I}_3+4\,\mathcal{F}_1+2\,\mathcal{I}_4+\mathcal{F}_2+\mathcal{F}_3.
\end{split}
\end{equation}

We first estimate $\mathcal{I}_3$: by Young's inequality, we have for every $\tau>0$ 
\[
\begin{split}
|\mathcal{I}_3|&\le \int \sum_{k=1}^N \left(\sum_{i=1}^N|g_{i,\varepsilon}'(u^\varepsilon_{x_i})|\right)\,\left(F'(G_{\varepsilon}(\nabla u^\varepsilon))\,\left|\Big(G_{\varepsilon}(\nabla u^\varepsilon)\Big)_{x_k}g'_{k,\varepsilon}(u^\varepsilon_{x_k})\right|\right)\,\eta\,|\nabla\eta|\,dx\\
&\le \frac{1}{2\,\tau}\,\int \left(\sum_{i=1}^N|g_{i,\varepsilon}'(u^\varepsilon_{x_i})|\right)^2\,F'(G_{\varepsilon}(\nabla u^\varepsilon))\,\left(\sum_{k=1}^N \frac{\Big(g_{k,\varepsilon}'(u_{x_k}^\varepsilon)\Big)^2}{g''_{k,\varepsilon}(u^\varepsilon_{x_k})}\right)|\nabla \eta|^2\\
&+\frac{\tau}{2}\,\int \sum_{k=1}^N F'(G(\nabla u^\varepsilon))\,\left|\Big(G_{\varepsilon}(\nabla u^\varepsilon)\Big)_{x_k}\right|^2\,g''_{k,\varepsilon}(u^\varepsilon_{x_k})\,\eta^2\,dx\\
&\le \frac{1}{2\,\tau}\,\int \left(\sum_{i=1}^N|g_{i,\varepsilon}'(u^\varepsilon_{x_i})|\right)^2\,F'(G_{\varepsilon}(\nabla u^\varepsilon))\,\left(\sum_{k=1}^N \frac{\Big(g_{k,\varepsilon}'(u_{x_k}^\varepsilon)\Big)^2}{g''_{k,\varepsilon}(u^\varepsilon_{x_k})}\right)|\nabla \eta|^2\,dx +\frac{\tau}{2}\,\mathcal{I}_1.
\end{split}
\]
By taking $\tau=1/2$, we can absorb the term $\mathcal{I}_1$ on the right-hand side and obtain from \eqref{4bis}
\begin{equation}
\label{5}
\begin{split}
\frac{1}{2}\,\mathcal{I}_1+\mathcal{I}_2
\le 2\,&\int \left(\sum_{i=1}^N|g_{i,\varepsilon}'(u^\varepsilon_{x_i})|\right)^2\,F'(G_{\varepsilon}(\nabla u^\varepsilon))\,\left(\sum_{k=1}^N \frac{\Big(g_{k,\varepsilon}'(u_{x_k}^\varepsilon)\Big)^2}{g''_{k,\varepsilon}(u^\varepsilon_{x_k})}\right)|\nabla \eta|^2\,dx\\
&+4\,\mathcal{F}_1+2\,\mathcal{I}_4+\mathcal{F}_2+\mathcal{F}_3.
\end{split}
\end{equation}
The term 
\[
\mathcal{I}_4=\sum_{i,k=1}^N\int g_{i,\varepsilon}'(u^\varepsilon_{x_i})\,F(G_{\varepsilon}(\nabla u^\varepsilon))\,g'_{k,\varepsilon}(u^\varepsilon_{x_k})\,(\eta_{x_k}\,\eta_{x_i}+\eta\,\eta_{x_i\,x_k})\,dx,
\] 
is easier to handle: we simply have
\[
\begin{split}
|\mathcal{I}_4|\le \int \left(\sum_{i=1}^N|g_{i,\varepsilon}'(u^\varepsilon_{x_i})|\right)^2\,F(G_{\varepsilon}(\nabla u^\varepsilon))\,\Big(|\nabla\eta|^2+\eta\,|D^2\eta|\Big)\,dx.
\end{split}
\]
In conclusion, from \eqref{5} we get
\begin{equation}
\label{6}
\begin{split}
\mathcal{I}_1+2\,\mathcal{I}_2
&\le 4\,\int \left(\sum_{i=1}^N|g_{i,\varepsilon}'(u^\varepsilon_{x_i})|\right)^2\,F'(G_{\varepsilon}(\nabla u^\varepsilon))\,\left(\sum_{k=1}^N \frac{\Big(g_{k,\varepsilon}'(u_{x_k}^\varepsilon)\Big)^2}{g''_{k,\varepsilon}(u^\varepsilon_{x_k})}\right)|\nabla \eta|^2\,dx\\
&+8\,\mathcal{F}_1+4\,\int \left(\sum_{i=1}^N|g_{i,\varepsilon}'(u^\varepsilon_{x_i})|\right)^2\,F(G_{\varepsilon}(\nabla u^\varepsilon))\,\Big(|\nabla\eta|^2+\eta\,|D^2\eta|\Big)\,dx+2\,\mathcal{F}_2+2\,\mathcal{F}_3.
\end{split}
\end{equation}
We now treat the three terms containing $f^\varepsilon$: we start from 
\[
\begin{split}
\mathcal{F}_1&=-\sum_{i=1}^N\int g_{i,\varepsilon}'(u^\varepsilon_{x_i})\,F(G_{\varepsilon}(\nabla u^\varepsilon))\,f^\varepsilon (\zeta^\varepsilon)'(u^\varepsilon)\,\eta\,\eta_{x_i}\,dx\\
&\le \int \sum_{i=1}^N|g_{i,\varepsilon}'(u^\varepsilon_{x_i})|\,F(G_{\varepsilon}(\nabla u^\varepsilon))\,|f^\varepsilon (\zeta^\varepsilon)'(u^\varepsilon)|\,\eta\,|\nabla \eta|\,dx\\
&\le \frac{1}{2}\,\int \left(\sum_{i=1}^Ng_{i,\varepsilon}'(u^\varepsilon_{x_i})\right)^2\,F(G_{\varepsilon}(\nabla u^\varepsilon))\,|\nabla \eta|^2\,dx+\frac{1}{2}\,\int |f^\varepsilon(\zeta^\varepsilon)'(u^\varepsilon)|^2\,F(G_{\varepsilon}(\nabla u^\varepsilon))\,\eta^2\,dx.
\end{split}
\]
The last term coincides with \(\frac{1}{2}\mathcal{F}_3\) while the first term is bounded from above (up to a multiplicative constant) by the third term on the right-hand side of \eqref{6} .  Using also  that \(\|(\zeta^\varepsilon)'\|_{L^{\infty}(\mathbb{R})}\leq 1\), 
we thus get from \eqref{6}
\begin{equation}
\label{7}
\begin{split}
\mathcal{I}_1+2\,\mathcal{I}_2
&\le 4\,\int \left(\sum_{i=1}^N|g_{i,\varepsilon}'(u^\varepsilon_{x_i})|\right)^2\,F'(G_{\varepsilon}(\nabla u^\varepsilon))\,\left(\sum_{k=1}^N \frac{\Big(g_{k,\varepsilon}'(u_{x_k}^\varepsilon)\Big)^2}{g''_{k,\varepsilon}(u^\varepsilon_{x_k})}\right)|\nabla \eta|^2\,dx\\
&+8\,\int \left(\sum_{i=1}^N|g_{i,\varepsilon}'(u^\varepsilon_{x_i})|\right)^2\,F(G_{\varepsilon}(\nabla u^\varepsilon))\,\Big(|\nabla\eta|^2+\eta\,|D^2\eta|\Big)\,dx\\
&+6\,\int |f^\varepsilon|^2\,F(G_{\varepsilon}(\nabla u^\varepsilon))\,\eta^2\,dx+2\,\mathcal{F}_2.
\end{split}
\end{equation}

The last term \(\mathcal{F}_2\) contains second order derivatives of $u^\varepsilon$ that should be absorbed on the left-hand side. We proceed similarly as for $\mathcal{I}_3$ and estimate it as follows
\[
\begin{split}
\mathcal{F}_2
&=-\sum_{k=1}^N\int f^\varepsilon (\zeta^\varepsilon)'(u^\varepsilon)\,\Big(F(G_{\varepsilon}(\nabla u^\varepsilon))\Big)_{x_k}\,g'_{k,\varepsilon}(u^\varepsilon_{x_k})\,\eta^2\,dx\\
&\le \sum_{k=1}^N \int |f^\varepsilon|\,\left(F'(G_{\varepsilon}(\nabla u^\varepsilon))\,\left|\Big(G_{\varepsilon}(\nabla u^\varepsilon)\Big)_{x_k}g'_{k,\varepsilon}(u^\varepsilon_{x_k})\right|\right)\,\eta^2\,dx\\
&\le \frac{1}{2\,\tau}\,\sum_{k=1}^N\int |f^\varepsilon|^2\, F'(G_{\varepsilon}(\nabla u^\varepsilon))\,\frac{\Big(g'_{k,\varepsilon}(u^\varepsilon_{x_k})\Big)^2}{g''_{k,\varepsilon}(u^{\varepsilon}_{x_k})}\,\eta^2\,dx\\
&+\frac{\tau}{2}\, \sum_{k=1}^N \int F'(G_{\varepsilon}(\nabla u^\varepsilon))\,\left|\Big(G_{\varepsilon}(\nabla u^\varepsilon)\Big)_{x_k}\right|^2\,g''_{k,\varepsilon}(u^\varepsilon_{x_k})\,\eta^2\,dx\\
&= \frac{1}{2\,\tau}\,\sum_{k=1}^N\int |f^\varepsilon|^2\, F'(G_{\varepsilon}(\nabla u^\varepsilon))\,\frac{\Big(g'_{k,\varepsilon}(u^\varepsilon_{x_k})\Big)^2}{g''_{k,\varepsilon}(u^{\varepsilon}_{x_k})}\,\eta^2\,dx +\frac{\tau}{2} \mathcal{I}_1.
\end{split}
\]
thanks to Young's inequality. Here as always $\tau>0$ is arbitrary. By inserting this estimate in \eqref{7} and choosing $\tau=1/2$, we obtain 
\[
\begin{split}
\mathcal{I}_1+4\,\mathcal{I}_2
&\le 8\,\int \left(\sum_{i=1}^N|g_{i,\varepsilon}'(u^\varepsilon_{x_i})|\right)^2\,F'(G_{\varepsilon}(\nabla u^\varepsilon))\,\left(\sum_{k=1}^N \frac{\Big(g_{k,\varepsilon}'(u_{x_k}^\varepsilon)\Big)^2}{g''_{k,\varepsilon}(u^\varepsilon_{x_k})}\right)|\nabla \eta|^2\,dx\\
&+16\,\int \left(\sum_{i=1}^N|g_{i,\varepsilon}'(u^\varepsilon_{x_i})|\right)^2\,F(G_{\varepsilon}(\nabla u^\varepsilon))\,\Big(|\nabla\eta|^2+\eta\,|D^2\eta|\Big)\,dx\\
&+12\,\int |f^\varepsilon|^2\,F(G_{\varepsilon}(\nabla u^\varepsilon))\,\eta^2\,dx+4\,\sum_{k=1}^N\int |f^\varepsilon|^2\, F'(G_{\varepsilon}(\nabla u^\varepsilon))\,\frac{\Big(g'_{k,\varepsilon}(u^\varepsilon_{x_k})\Big)^2}{g''_{k,\varepsilon}(u^\varepsilon_{x_k})}\,\eta^2\,dx.
\end{split}
\]

We observe that if we set
\begin{equation}
\label{delti}
\delta_k=\max\left\{\frac{p_k}{p_k-1},p_k\right\}\qquad \mbox{ and }\qquad \overline\delta=\max_{k=1,\dots,N} \delta_k=\max\left\{\frac{p_1}{p_1-1},p_N\right\},
\end{equation}
by \eqref{g4} or \eqref{g4b}, we have
\begin{equation}
\label{gG}
\begin{split}
\sum_{k=1}^N\frac{\Big(g_{k,\varepsilon}'(u_{x_k}^\varepsilon)\Big)^2}{g''_{k,\varepsilon}(u^\varepsilon_{x_k})}\le \sum_{k=1}^N \delta_k\,g_{k,\varepsilon}(u^\varepsilon_{x_k})&\le 
\overline\delta\,G_{\varepsilon}(\nabla u^\varepsilon).
\end{split}
\end{equation}
Thus we have obtained
\begin{equation}
\label{10}
\begin{split}
\mathcal{I}_1+4\,\mathcal{I}_2
&\le 8\,\overline\delta\,\int \left(\sum_{i=1}^N|g_{i,\varepsilon}'(u^\varepsilon_{x_i})|\right)^2\,F'(G_{\varepsilon}(\nabla u^\varepsilon))\,G_{\varepsilon}(\nabla u^\varepsilon)\,|\nabla \eta|^2\,dx\\
&+16\,\int \left(\sum_{i=1}^N|g_{i,\varepsilon}'(u^\varepsilon_{x_i})|\right)^2\,F(G_{\varepsilon}(\nabla u^\varepsilon))\,\Big(|\nabla\eta|^2+\eta\,|D^2\eta|\Big)\,dx\\
&+12\,\int |f^\varepsilon|^2\,F(G_{\varepsilon}(\nabla u^\varepsilon))\,\eta^2\,dx+4\,\overline{\delta}\,\int |f^\varepsilon|^2\, F'(G_{\varepsilon}(\nabla u^\varepsilon))\,G_{\varepsilon}(\nabla u^\varepsilon)\,\eta^2\,dx.
\end{split}
\end{equation}
We now use Lemma \ref{lm:majorette} to estimate from above the right-hand side. Thus, from \eqref{10}, we get
\begin{equation}
\label{11}
\begin{split}
\mathcal{I}_1+4\mathcal{I}_2
&\le 8\,\overline\delta\,\left(p_N^\frac{p_N-1}{p_N}\right)^2\,\int \left(\sum_{i=1}^NG_{\varepsilon}(\nabla u^\varepsilon)^\frac{p_i-1}{p_i}\right)^2\,F'(G_{\varepsilon}(\nabla u^\varepsilon))\,G_{\varepsilon}(\nabla u^\varepsilon)\,|\nabla \eta|^2\,dx\\
&+16\,\left(p_N^\frac{p_N-1}{p_N}\right)^2\,\int \left(\sum_{i=1}^NG_{\varepsilon}(\nabla u^\varepsilon)^\frac{p_i-1}{p_i}\right)^2\,F(G_{\varepsilon}(\nabla u^\varepsilon))\,\Big(|\nabla\eta|^2+\eta\,|D^2\eta|\Big)\,dx\\
&+12\,\overline{\delta}\,\int |f^\varepsilon|^2\, \Big[F(G_{\varepsilon}(\nabla u^\varepsilon))+F'(G_{\varepsilon}(\nabla u^\varepsilon))\,G_{\varepsilon}(\nabla u^\varepsilon)\Big]\,\eta^2\,dx.
\end{split}
\end{equation}
By recalling the definition \eqref{Ggrande} of $\mathcal{G}_\varepsilon$,
we observe that $\mathcal{G}_\varepsilon(\nabla u^\varepsilon)\ge{G}_\varepsilon(\nabla u^\varepsilon)$ and $\mathcal{G}_\varepsilon(\nabla u^\varepsilon)\ge 1$. Thus in particular, we get
\[
\sum_{i=1}^NG_{\varepsilon}(\nabla u^\varepsilon)^\frac{p_i-1}{p_i}\le N\,\mathcal{G}_\varepsilon(\nabla u^\varepsilon)^\frac{p_N-1}{p_N},
\]
and from \eqref{11}, we obtain
\begin{equation}
\label{99}
\begin{split}
\mathcal{I}_1+4\mathcal{I}_2&\le 8\,\overline\delta\,\left(N\,p_N^\frac{p_N-1}{p_N}\right)^2\,\int \mathcal{G}_\varepsilon(\nabla u^\varepsilon)^{2\,\frac{p_N-1}{p_N}+1}\,F'(G_{\varepsilon}(\nabla u^\varepsilon))\,|\nabla \eta|^2\,dx\\
&+16\,\left(N\,p_N^\frac{p_N-1}{p_N}\right)^2\,\int \mathcal{G}_\varepsilon(\nabla u^\varepsilon)^{2\,\frac{p_N-1}{p_N}}\,F(G_{\varepsilon}(\nabla u^\varepsilon))\,\Big(|\nabla\eta|^2+\eta\,|D^2\eta|\Big)\,dx\\
&+12\,\overline{\delta}\,\int |f^\varepsilon|^2\, \Big[F(G_{\varepsilon}(\nabla u^\varepsilon))+F'(G_{\varepsilon}(\nabla u^\varepsilon))\,G_{\varepsilon}(\nabla u^\varepsilon)\Big]\,\eta^2\,dx.
\end{split}
\end{equation}

In order to conclude, we now make the choice
\[
F(t)=\big((t-1)_++1\big)^\alpha,\qquad \mbox{ with }\alpha>0.
\]
We observe that
\begin{equation}
\label{0030}
\left|\Big(\mathcal{G}_\varepsilon(\nabla u^\varepsilon)\Big)_{x_i}\right|^2\le \left|\Big(G_{\varepsilon}(\nabla u^\varepsilon)\Big)_{x_i}\right|^2,
\end{equation}
thanks to the definition of $\mathcal{G}_{\varepsilon}$. 

It follows that
\[
\begin{split}
F'(G_{\varepsilon}(\nabla u^\varepsilon))\,\left|\Big(G_\varepsilon(\nabla u^\varepsilon)\Big)_{x_i}\right|^2&\geq \alpha\,\mathcal{G}_\varepsilon(\nabla u^\varepsilon)^{\alpha-1}\,\left|\Big(\mathcal{G}_\varepsilon(\nabla u^\varepsilon)\Big)_{x_i}\right|^2=\frac{4\,\alpha}{(\alpha+1)^2}\,\left|\Big(\mathcal{G}_\varepsilon(\nabla u^\varepsilon)^\frac{\alpha+1}{2}\Big)_{x_i}\right|^2,
\end{split}
\]
and thus
\[
\begin{split}
\mathcal{I}_1&= \sum_{i=1}^N \int g_{i,\varepsilon}''(u^\varepsilon_{x_i})\,F'(G_{\varepsilon}(\nabla u^\varepsilon))\,\left|\Big(G_{\varepsilon}(\nabla u^\varepsilon)\Big)_{x_i}\right|^2\,\eta^2\,dx\\
&\geq \frac{4\,\alpha}{(\alpha+1)^2}\,\sum_{i=1}^N \int g_{i,\varepsilon}''(u^\varepsilon_{x_i})\,\left|\Big(\mathcal{G}_\varepsilon(\nabla u^\varepsilon)^\frac{\alpha+1}{2}\Big)_{x_i}\right|^2\,\eta^2\,dx.
\end{split}
\]
From \eqref{99} we get 
\begin{equation}
\label{12}
\begin{split}
\frac{4\,\alpha}{(\alpha+1)^2}\,\sum_{i=1}^N \int &g_{i,\varepsilon}''(u^\varepsilon_{x_i})\,\left|\Big(\mathcal{G}_\varepsilon(\nabla u^\varepsilon)^\frac{\alpha+1}{2}\Big)_{x_i}\right|^2\,\eta^2\,dx+4\,\mathcal{I}_2\\
&\le C\,(\alpha+1)\,\int \mathcal{G}_\varepsilon(\nabla u^\varepsilon)^{\alpha+2\,\frac{p_N-1}{p_N}}\,\Big(|\nabla\eta|^2+\eta\,|D^2\eta|\Big)\,dx\\
&+C\,(\alpha+1)\,\int |f^\varepsilon|^2\, \mathcal{G}_\varepsilon(\nabla u^\varepsilon)^\alpha\,\eta^2\,dx,
\end{split}
\end{equation}
for some $C=C(N,p_1,p_N)>0$. We are only left with estimating $\mathcal{I}_2$ from below: recall that we have 
\[
\begin{split}
\mathcal{I}_2&=\sum_{i,k=1}^N\int g_{i,\varepsilon}''(u^\varepsilon_{x_i})\,|u^{\varepsilon}_{x_k\,x_i}|^2\,F(G_{\varepsilon}(\nabla u^\varepsilon))\,g''_{k,\varepsilon}(u^\varepsilon_{x_k})\,\eta^2\,dx\\
&=\sum_{i,k=1}^N\int g_{i,\varepsilon}''(u^\varepsilon_{x_i})\,|u^{\varepsilon}_{x_k\,x_i}|^2\,\mathcal{G}_\varepsilon(\nabla u^\varepsilon)^\alpha\,g''_{k,\varepsilon}(u^\varepsilon_{x_k})\,\eta^2\,dx.
\end{split}
\]
We now observe that by \eqref{0030} and through some lengthy though elementary computations, we get
\[
\begin{split}
\left|\Big(\mathcal{G}_\varepsilon(\nabla u^\varepsilon)^\frac{1}{2}\Big)_{x_i}\right|^2=\frac{1}{4\,\mathcal{G}_\varepsilon(\nabla u^\varepsilon)}\,\left|\Big(\mathcal{G}_\varepsilon(\nabla u^\varepsilon\Big)_{x_i}\right|^2&\le \frac{1}{4\,\mathcal{G}_\varepsilon(\nabla u^\varepsilon)}\,\left|\Big(G_{\varepsilon}(\nabla u^\varepsilon\Big)_{x_i}\right|^2\\
&=\frac{1}{4\,\mathcal{G}_\varepsilon(\nabla u^\varepsilon)}\,\left|\left(\sum_{k=1}^N g_{k,\varepsilon}(u_{x_k}^\varepsilon)\right)_{x_i}\right|^2\\
&=\frac{1}{4\,\mathcal{G}_\varepsilon(\nabla u^\varepsilon)}\,\left|\sum_{k=1}^N g'_{k,\varepsilon}(u_{x_k}^\varepsilon)\,u^\varepsilon_{x_k\,x_i}\right|^2\\
&\le\frac{N}{4\,\mathcal{G}_\varepsilon(\nabla u^\varepsilon)}\,\sum_{k=1}^N |g'_{k,\varepsilon}(u_{x_k}^\varepsilon)|^2\,|u^\varepsilon_{x_k\,x_i}|^2. 
\end{split}
\]
We then apply \eqref{g4} or \eqref{g4b} on the last term, so to get
\[
\begin{split}
\left|\Big(\mathcal{G}_\varepsilon(\nabla u^\varepsilon)^\frac{1}{2}\Big)_{x_i}\right|^2&\le\frac{N}{4\,\mathcal{G}_\varepsilon(\nabla u^\varepsilon)}\,\sum_{k=1}^N \delta_k\,g_{k,\varepsilon}''(u^\varepsilon_{x_k})\,g_{k,\varepsilon}(u^\varepsilon_{x_k})\,|u^\varepsilon_{x_k\,x_i}|^2,
\end{split}
\]
where $\delta_k$ is the same quantity defined in \eqref{delti}.
We further observe that 
\[
\frac{g_{k,\varepsilon}(u^\varepsilon_{x_k})}{\mathcal{G}_\varepsilon(\nabla u^\varepsilon)}\le \frac{G_{\varepsilon}(\nabla u^\varepsilon)}{\mathcal{G}_\varepsilon(\nabla u^\varepsilon)}\le 1.
\]
This discussion leads us to 
\[
\frac{4}{N}\,\frac{1}{\overline\delta}\left|\Big(\mathcal{G}_\varepsilon(\nabla u^\varepsilon)^\frac{1}{2}\Big)_{x_i}\right|^2\le\sum_{k=1}^N g_{k,\varepsilon}''(u^\varepsilon_{x_k})\,|u^\varepsilon_{x_k\,x_i}|^2.
\]
By inserting this inequality in $\mathcal{I}_2$, we get
\[
\begin{split}
\mathcal{I}_2&\ge \frac{4}{N}\,\frac{1}{\overline\delta}\,\sum_{i=1}^N\int g_{i,\varepsilon}''(u^\varepsilon_{x_i})\,\mathcal{G}_\varepsilon(\nabla u^\varepsilon)^\alpha\,\left|\Big(\mathcal{G}_\varepsilon(\nabla u^\varepsilon)^\frac{1}{2}\Big)_{x_i}\right|^2\eta^2\,dx\\
&=\frac{4}{(\alpha+1)^2}\,\frac{1}{N\,\overline\delta}\,\sum_{i=1}^N \int g_{i,\varepsilon}''(u^\varepsilon_{x_i})\,\left|\Big(\mathcal{G}_\varepsilon(\nabla u^\varepsilon)^\frac{\alpha+1}{2}\Big)_{x_i}\right|^2\,\eta^2\,dx.
\end{split}
\]
Finally, we can use this estimate in \eqref{12}, so as to get the desired conclusion for $\alpha>0$. The limit case $\alpha=0$ can now be simply obtained by taking the limit $\alpha$ goes to $0$ in the previously obtained estimate, since the relevant constant remains bounded.
\end{proof}

\begin{prop}[Slow Moser's iteration]
\label{prop:slow}
Let $1<p_1\le p_2\le \dots\le p_N<\infty$.
For every $\vartheta\ge 2/p_N'$ and every non-negative function $\eta\in C^2_0(B)$, we have
\begin{equation}
\label{slowmoser}
\begin{split}
\int \mathcal{G}_\varepsilon(\nabla u^\varepsilon)^{\vartheta+\frac{2}{p_N}}\,\eta^2\,dx&\le \frac{C^\vartheta}{\vartheta}\,\int\eta^2\,dx+C\,\|u^\varepsilon\|_{L^\infty(B)}^2\,\vartheta^2\,\int \mathcal{G}_\varepsilon(\nabla u^\varepsilon)^\vartheta\,\Big(|\nabla \eta|^2+\eta\,|D^2 \eta|\Big)\,dx\\
&+C^\vartheta\,\vartheta^{\vartheta}\,\|u^\varepsilon\|_{L^\infty(B)}^{\vartheta+\frac{2}{p_N}}\,\int |f^\varepsilon|^{\vartheta+\frac{2}{p_N}}\,\eta^2\,dx,
\end{split}
\end{equation}
for some $C=C(N,p_1,p_N)>0$.
\end{prop}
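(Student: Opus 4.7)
My plan is to test the Euler--Lagrange equation \eqref{nondiff} with
\[
\varphi=u^\varepsilon\,\mathcal{G}_\varepsilon(\nabla u^\varepsilon)^{\beta}\,\eta^2,\qquad \beta:=\vartheta+\frac{2}{p_N}-1,
\]
and to pair the resulting identity with the Caccioppoli inequality \eqref{caccioppola} of Proposition \ref{prop:caccioespilon2} taken at $\alpha:=\vartheta-\frac{2}{p_N'}\geq 0$. These exponents are rigidly fixed by two constraints: $\alpha+2-\frac{2}{p_N}=\vartheta$ so that the right-hand side of \eqref{caccioppola} already carries the correct power of $\mathcal{G}_\varepsilon$, and a simple computation yielding $2\beta-\alpha=\beta+1=\vartheta+\frac{2}{p_N}$ so that all cross-terms of Young type will land precisely at the target exponent. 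The factor $u^\varepsilon$ in the test function is what encodes the $\|u^\varepsilon\|_{L^\infty(B)}^2$ dependence, via the uniform bound provided by Lemma \ref{lm:below}.

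Expanding $\varphi_{x_i}$ and rearranging, \eqref{nondiff} reads
\[
\sum_{i=1}^N\int g'_{i,\varepsilon}(u^\varepsilon_{x_i})\,u^\varepsilon_{x_i}\,\mathcal{G}_\varepsilon^\beta\,\eta^2\,dx=-\beta\sum_{i=1}^N\int g'_{i,\varepsilon}(u^\varepsilon_{x_i})\,u^\varepsilon\,\mathcal{G}_\varepsilon^{\beta-1}(\mathcal{G}_\varepsilon)_{x_i}\,\eta^2\,dx+(\mathrm{cutoff})+(f\ \mathrm{term}).
\]
For the left-hand side, the bounds \eqref{g3} and \eqref{g3b} give $g'_{i,\varepsilon}(t)\,t\geq c\,g_{i,\varepsilon}(t)-\varepsilon^{p_i/2}$ with $c=c(p_1,p_N)>0$, so that summation in $i$ together with the pointwise inequality $G_\varepsilon\,\mathcal{G}_\varepsilon^\beta\geq \mathcal{G}_\varepsilon^{\beta+1}-1$ (immediate from \eqref{Ggrande}) yields a lower bound of the form $c\int\mathcal{G}_\varepsilon^{\vartheta+2/p_N}\eta^2-C\int\mathcal{G}_\varepsilon^\beta\eta^2-c\int\eta^2$. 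The term $C\int\mathcal{G}_\varepsilon^\beta\eta^2$ is absorbed by a Young's inequality with a $\vartheta$-independent parameter, at the cost of a constant of the form $C^\vartheta/\vartheta$ in front of $\int\eta^2$, yielding the first summand of \eqref{slowmoser}. The cutoff cross-term is controlled by Lemma \ref{lm:majorette} and one Young step, producing a contribution of size $C\,M^2\int\mathcal{G}_\varepsilon^\vartheta|\nabla\eta|^2$. The $f$-term $\int f^\varepsilon(\zeta^\varepsilon)'(u^\varepsilon)u^\varepsilon\mathcal{G}_\varepsilon^\beta\eta^2$ is split by Young with conjugate exponents $\bigl(\beta+1,(\beta+1)/\beta\bigr)$; the Young constant scales as $\tau^{-\beta}$, so a choice of $\tau$ proportional to $M^\beta$ produces the $C^\vartheta M^{\vartheta+2/p_N}$ dependence on the $|f^\varepsilon|^{\vartheta+2/p_N}$ piece.

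The heart of the matter is the main cross-term $I:=-\beta\sum_i\int g'_{i,\varepsilon}(u^\varepsilon_{x_i})u^\varepsilon\mathcal{G}_\varepsilon^{\beta-1}(\mathcal{G}_\varepsilon)_{x_i}\eta^2$. Using the identity $\mathcal{G}_\varepsilon^{\beta-1}(\mathcal{G}_\varepsilon)_{x_i}=\frac{2}{\alpha+1}\mathcal{G}_\varepsilon^{\beta-(\alpha+1)/2}\bigl(\mathcal{G}_\varepsilon^{(\alpha+1)/2}\bigr)_{x_i}$ and inserting the weight $\sqrt{g''_{i,\varepsilon}(u^\varepsilon_{x_i})}$, a Cauchy--Schwarz with a parameter $\tau_1>0$ gives
\[
|I|\leq \frac{2\beta M\,\tau_1}{\alpha+1}\sum_i\int g''_{i,\varepsilon}(u^\varepsilon_{x_i})\bigl|\bigl(\mathcal{G}_\varepsilon^{(\alpha+1)/2}\bigr)_{x_i}\bigr|^2\eta^2\,dx+\frac{\beta M}{2\tau_1(\alpha+1)}\sum_i\int\frac{|g'_{i,\varepsilon}(u^\varepsilon_{x_i})|^2}{g''_{i,\varepsilon}(u^\varepsilon_{x_i})}\mathcal{G}_\varepsilon^{2\beta-\alpha-1}\eta^2\,dx,
\]
and the second integral is at most $\overline\delta\int\mathcal{G}_\varepsilon^{2\beta-\alpha}\eta^2=\overline\delta\int\mathcal{G}_\varepsilon^{\beta+1}\eta^2$ by \eqref{g4} and \eqref{g4b}. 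Since this coincides with the target integral, it must be absorbed on the left, which forces $\tau_1$ proportional to $\beta M/(\alpha+1)$; with this choice, the first integral is controlled by Proposition \ref{prop:caccioespilon2}, and the factors $(\alpha+1)^2$ in the Caccioppoli constant and $1/(\alpha+1)^2$ introduced by the Cauchy--Schwarz \emph{cancel exactly}, leaving a clean contribution $C\,M^2\beta^2\int\mathcal{G}_\varepsilon^\vartheta(|\nabla\eta|^2+\eta|D^2\eta|)+C\,M^2\beta^2\int|f^\varepsilon|^2\mathcal{G}_\varepsilon^\alpha\eta^2$. This compensation is precisely what delivers the announced $\vartheta^2$ scaling in \eqref{slowmoser}, instead of the naive $\vartheta^4$ that a rougher Cauchy--Schwarz would produce.

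Finally, the residual term $C\,M^2\beta^2\int|f^\varepsilon|^2\mathcal{G}_\varepsilon^\alpha\eta^2$ is split by one last Young's inequality with conjugate exponents $\bigl((\beta+1)/\alpha,(\beta+1)/2\bigr)$, which are legitimate because $\alpha+2=\beta+1$; it yields an absorbable $\mathcal{G}_\varepsilon^{\beta+1}$ piece plus a contribution whose Young constant scales as $\tau_2^{-\alpha/2}$ with $\tau_2\sim 1/(\beta^2 M^2)$, producing exactly the factor $\beta^{\alpha+1}M^{\alpha+2}\sim \vartheta^\vartheta M^{\vartheta+2/p_N}$ in front of $\int|f^\varepsilon|^{\vartheta+2/p_N}\eta^2$. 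Collecting all pieces and absorbing the finitely many $\delta\int\mathcal{G}_\varepsilon^{\vartheta+2/p_N}\eta^2$ contributions on the left yields \eqref{slowmoser}. The main obstacle is not any individual step but the exponent bookkeeping: both identities $\alpha+2-\frac{2}{p_N}=\vartheta$ and $2\beta-\alpha=\beta+1$ must hold simultaneously, and one must arrange the three successive Young/Cauchy--Schwarz absorptions so that the $(\alpha+1)^2$-cancellation yields polynomial rather than superpolynomial dependence on $\vartheta$ in the principal term.
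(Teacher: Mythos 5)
Your plan is essentially the paper's own proof, restated: test \eqref{nondiff} with $\varphi=u^\varepsilon\,\mathcal{G}_\varepsilon(\nabla u^\varepsilon)^\beta\,\eta^2$ (the paper reaches the same identity by writing $\mathcal{G}_\varepsilon^{\beta+1}\le\mathcal{G}_\varepsilon^\beta+\mathcal{G}_\varepsilon^\beta\sum_k g_{k,\varepsilon}$, invoking \eqref{g3}--\eqref{g3b}, and then integrating by parts), choose $\alpha=\beta-1$ in Proposition~\ref{prop:caccioespilon2}, absorb the main cross-term by a weighted Cauchy--Schwarz feeding into \eqref{caccioppola}, and close with Young's inequality on the $f^\varepsilon$-terms, finally relabelling $\vartheta=\beta+1-2/p_N$. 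The exponent constraints you impose are exactly the paper's, so this is the same route, not a different one.

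Three small slips in the write-up are worth flagging, although none affects the plan. First, the pointwise inequality $G_\varepsilon\,\mathcal{G}_\varepsilon^\beta\ge\mathcal{G}_\varepsilon^{\beta+1}-1$ is false; the correct one is $G_\varepsilon\,\mathcal{G}_\varepsilon^\beta\ge(\mathcal{G}_\varepsilon-1)\mathcal{G}_\varepsilon^\beta=\mathcal{G}_\varepsilon^{\beta+1}-\mathcal{G}_\varepsilon^\beta$, which however does produce exactly the lower bound of the form $c\int\mathcal{G}_\varepsilon^{\beta+1}\eta^2-C\int\mathcal{G}_\varepsilon^\beta\eta^2-\dots$ you go on to use. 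Second, the advertised exact cancellation between the $(\alpha+1)^2$ in the Caccioppoli constant and ``a $1/(\alpha+1)^2$ from the Cauchy--Schwarz'' is off by a power: in $\mathcal{G}_\varepsilon^{\beta-1}(\mathcal{G}_\varepsilon)_{x_i}=\tfrac{2}{\alpha+1}\,\mathcal{G}_\varepsilon^{\beta-(\alpha+1)/2}\,(\mathcal{G}_\varepsilon^{(\alpha+1)/2})_{x_i}$ only a single factor of $(\alpha+1)$ cancels against the $\beta$ in front of $I$; the surviving $(\alpha+1)^2=\beta^2$ from Caccioppoli is precisely the source of the $\vartheta^2$ in \eqref{slowmoser}, not a quantity that disappears. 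Third, for the term $\int f^\varepsilon(\zeta^\varepsilon)'(u^\varepsilon)u^\varepsilon\mathcal{G}_\varepsilon^\beta\eta^2\,dx$ the Young parameter must be fixed independently of $\vartheta$ and $M$ (the paper takes $\tau=1/8$); taking $\tau\sim M^\beta$, as you suggest, would make the $\mathcal{G}_\varepsilon^{\beta+1}$ piece unabsorbable. The $M^{\vartheta+2/p_N}$ dependence comes simply from the bounded factor $|u^\varepsilon|\le M$, pre-factored with $|f^\varepsilon|$ and then raised to the power $\beta+1$ inside Young.
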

\begin{proof}
We start by taking $\beta\ge 1$ and writing
\[
\begin{split}
\int \mathcal{G}_\varepsilon(\nabla u^\varepsilon)^{\beta+1}\,\eta^2\,dx&=\int \mathcal{G}_\varepsilon(\nabla u^\varepsilon)^{\beta}\,\big((G_{\varepsilon}(\nabla u^\varepsilon)-1)_++1\big)\,\eta^2\,dx\\
&\le \int \mathcal{G}_\varepsilon(\nabla u^\varepsilon)^{\beta}\,\eta^2\,dx+\int \mathcal{G}_\varepsilon(\nabla u^\varepsilon)^\beta\,\sum_{k=1}^N g_{k,\varepsilon}(u_{x_k}^\varepsilon)\,\eta^2\,dx.
\end{split}
\]
We observe that if we set
\[
\sigma_k=\max\left\{1,2^{\frac{p_k-2}{2}}\right\}\qquad \mbox{ and }\qquad \overline\sigma=\max_{k=1,\dots,N} \sigma_k=\max\left\{1,2^{\frac{p_N-2}{2}}\right\},
\]
using \eqref{g3} or \eqref{g3b}  on the second integral, we get
\[
\begin{split}
\int \mathcal{G}_\varepsilon(\nabla u^\varepsilon)^{\beta+1}\,\eta^2\,dx
&\le \int\mathcal{G}_\varepsilon(\nabla u^\varepsilon)^{\beta}\,\eta^2\,dx+\int \mathcal{G}_\varepsilon(\nabla u^\varepsilon)^\beta\,\sum_{k=1}^N \sigma_k\frac{\varepsilon^\frac{p_k}{2}}{p_k}\,\eta^2\,dx\\
&+\int \mathcal{G}_\varepsilon(\nabla u^\varepsilon)^\beta\,\sum_{k=1}^N \sigma_k\frac{g'_{k,\varepsilon}(u_{x_k}^\varepsilon)}{p_k}\,u_{x_k}^\varepsilon\,\eta^2\,dx.\\
\end{split}
\]
By recalling that $g_{k,\varepsilon}'(t)\,t\ge 0$ and using that $0<\varepsilon\le 1$, we get
\[
\begin{split}
\int \mathcal{G}_\varepsilon(\nabla u^\varepsilon)^{\beta+1}\,\eta^2\,dx&\le \int \mathcal{G}_\varepsilon(\nabla u^\varepsilon)^{\beta}\,\eta^2\,dx+\overline{\sigma}\,\frac{N}{p_1}\,\int \mathcal{G}_\varepsilon(\nabla u^\varepsilon)^\beta\,\eta^2\,dx\\
&+\frac{\overline\sigma}{p_1}\,\int \mathcal{G}_\varepsilon(\nabla u^\varepsilon)^\beta\,\sum_{k=1}^N g'_{k,\varepsilon}(u_{x_k}^\varepsilon)\,u_{x_k}^\varepsilon\,\eta^2\,dx.\\
\end{split}
\]
On the last term, using product rule and equation \eqref{nondiff}, we obtain
\[
\begin{split}
\int \mathcal{G}_\varepsilon(\nabla u^\varepsilon)^{\beta+1}\,\eta^2\,dx&\le \left(1+\overline\sigma\,\frac{N}{p_1}\right)\,\int\mathcal{G}_\varepsilon(\nabla u^\varepsilon)^{\beta}\,\eta^2\,dx-\frac{\overline\sigma}{p_1}\,\sum_{k=1}^N\int \Big(\mathcal{G}_\varepsilon(\nabla u^\varepsilon)^\beta\Big)_{x_k}\,g'_{k,\varepsilon}(u_{x_k}^\varepsilon)\,u^\varepsilon\,\eta^2\,dx\\
&-\frac{2\,\overline\sigma}{p_1}\,\sum_{k=1}^N\int \mathcal{G}_\varepsilon(\nabla u^\varepsilon)^\beta\,g'_{k,\varepsilon}(u_{x_k}^\varepsilon)\,u^\varepsilon\,\eta\,\eta_{x_k}\,dx\\
&+\frac{\overline\sigma}{p_1}\,\int \mathcal{G}_\varepsilon(\nabla u^\varepsilon)^\beta\,f^\varepsilon (\zeta^\varepsilon)'(u^\varepsilon)\,u^\varepsilon\,\eta^2\,dx.
\end{split}
\]
By using that $u^\varepsilon$ is bounded and that $0\le (\zeta^\varepsilon)'\le 1$, we get
\begin{equation}
\label{choe2}
\begin{split}
\int \mathcal{G}_\varepsilon(\nabla u^\varepsilon)^{\beta+1}\,\eta^2\,dx&\le \left(1+\overline\sigma\,\frac{N}{p_1}\right)\,\int\mathcal{G}_\varepsilon(\nabla u^\varepsilon)^{\beta}\,\eta^2\,dx\\
&+\frac{\overline\sigma\,\|u^\varepsilon\|_{L^\infty(B)}}{p_1}\,\sum_{k=1}^N\int \left|\Big(\mathcal{G}_\varepsilon(\nabla u^\varepsilon)^\beta\Big)_{x_k}\right|\,|g'_{k,\varepsilon}(u_{x_k}^\varepsilon)|\,\eta^2\,dx\\
&+\frac{2\,\overline\sigma\,\|u^\varepsilon\|_{L^\infty(B)}}{p_1}\,\sum_{k=1}^N\int \mathcal{G}_\varepsilon(\nabla u^\varepsilon)^\beta\,|g'_{k,\varepsilon}(u_{x_k}^\varepsilon)|\,\eta\,|\eta_{x_k}|\,dx\\
&+\frac{\overline\sigma\,\|u^\varepsilon\|_{L^\infty(B)}}{p_1}\,\int \mathcal{G}_\varepsilon(\nabla u^\varepsilon)^\beta\,|f^\varepsilon|\,\eta^2\,dx.
\end{split}
\end{equation}
Now, \eqref{majorette} together with the fact that $\mathcal{G}_{\varepsilon}\ge G_{\varepsilon}$ and $\mathcal{G}_{\varepsilon}\ge 1$ entail
\[
\begin{split}
&\frac{2\,\overline\sigma\,\|u^\varepsilon\|_{L^\infty(B)}}{p_1}\,\sum_{k=1}^N \int \mathcal{G}_\varepsilon(\nabla u^\varepsilon)^\beta\,|g'_{k,\varepsilon}(u_{x_k}^\varepsilon)|\,\eta\,|\eta_{x_k}|\,dx\\
&\le \frac{2\,\overline\sigma\,\|u^\varepsilon\|_{L^\infty(B)}}{p_1}\,N\,p_N^\frac{p_N-1}{p_N}\,\int \mathcal{G}_\varepsilon(\nabla u^\varepsilon)^{\beta+1-\frac{1}{p_N}}\,\eta\,|\nabla \eta|\,dx\\
&\le \tau\,\int \mathcal{G}_\varepsilon(\nabla u^\varepsilon)^{\beta+1}\,\eta^2\,dx+\frac{1}{\tau}\,\left(N\,p_N^\frac{p_N-1}{p_N}\right)^2\,\left(\frac{\overline\sigma\,\|u^\varepsilon\|_{L^\infty(B)}}{p_1}\right)^2\,\int \mathcal{G}_\varepsilon(\nabla u^\varepsilon)^{\beta+1-\frac{2}{p_N}}\,|\nabla \eta|^2\,dx,
\end{split}
\]
where in the last inequality we applied Young's inequality.
By choosing $\tau=1/2$, we can absorb the term containing $\mathcal{G}_\varepsilon(\nabla u^\varepsilon)^{\beta+1}$ and get from \eqref{choe2}
\begin{equation}
\label{choe3}
\begin{split}
\frac{1}{2}\,\int \mathcal{G}_\varepsilon(\nabla u^\varepsilon)^{\beta+1}\,\eta^2\,dx&\le \left(1+\frac{\overline\sigma\,N}{p_1}\right)\,\int\mathcal{G}_\varepsilon(\nabla u^\varepsilon)^{\beta}\,\eta^2\,dx\\
&+ \frac{\overline\sigma\,\|u^\varepsilon\|_{L^\infty(B)}}{p_1}\,\sum_{k=1}^N\int \left|\Big(\mathcal{G}_\varepsilon(\nabla u^\varepsilon)^\beta\Big)_{x_k}\right|\,|g'_{k,\varepsilon}(u_{x_k}^\varepsilon)|\,\eta^2\,dx\\
&+2\,\left(N\,p_N^\frac{p_N-1}{p_N}\right)^2\,\left(\overline\sigma\,\frac{\|u^\varepsilon\|_{L^\infty(B)}}{p_1}\right)^2\,\int \mathcal{G}_\varepsilon(\nabla u^\varepsilon)^{\beta+1-\frac{2}{p_N}}\,|\nabla \eta|^2\,dx\\
&+\frac{\overline\sigma\,\|u^\varepsilon\|_{L^\infty(B)}}{p_1}\,\int \mathcal{G}_\varepsilon(\nabla u^\varepsilon)^\beta\,|f^\varepsilon|\,\eta^2\,dx.
\end{split}
\end{equation}
For the second term on the right-hand side, we use  Young's inequality: for every $\tau>0$,
\[
\begin{split}
 \frac{\overline\sigma\,\|u^\varepsilon\|_{L^\infty(B)}}{p_1}\,\sum_{k=1}^N\int &\left|\Big(\mathcal{G}_\varepsilon(\nabla u^\varepsilon)^\beta\Big)_{x_k}\right|\,|g'_{k,\varepsilon}(u_{x_k}^\varepsilon)|\,\eta^2\,dx\\
 &=\frac{\overline\sigma\,\|u^\varepsilon\|_{L^\infty(B)}}{p_1}\,\sum_{k=1}^N \beta\,\int\mathcal{G}_\varepsilon(\nabla u^\varepsilon)^{\beta-1}\,\left|\Big(\mathcal{G}_\varepsilon(\nabla u^\varepsilon)\Big)_{x_k}\right|\,|g'_{k,\varepsilon}(u_{x_k}^\varepsilon)|\,\eta^2\,dx\\
&\le \left(\frac{\overline\sigma\,\|u^\varepsilon\|_{L^\infty(B)}}{p_1}\right)^2\,\frac{\beta^2}{2\,\tau}\,\sum_{k=1}^N\int \mathcal{G}_\varepsilon(\nabla u^\varepsilon)^{\beta-2}\,\left|\Big(\mathcal{G}_\varepsilon(\nabla u^\varepsilon)\Big)_{x_k}\right|^2\,g''_{k,\varepsilon}(u_{x_k}^\varepsilon)\,\eta^2\,dx\\
&+\frac{\tau}{2}\,\sum_{k=1}^N\int\mathcal{G}_\varepsilon(\nabla u^\varepsilon)^\beta\,\frac{\Big(g_{k,\varepsilon}'(u_{x_k}^\varepsilon)\Big)^2}{g''_{k,\varepsilon}(u^\varepsilon_{x_k})}\,\eta^2\,dx\\
&=\frac{2}{\tau}\,\left(\frac{\overline\sigma\,\|u^\varepsilon\|_{L^\infty(B)}}{p_1}\right)^2\,\sum_{k=1}^N\int\left|\Big(\mathcal{G}_\varepsilon(\nabla u^\varepsilon)^\frac{\beta}{2}\Big)_{x_k}\right|^2\,g''_{k,\varepsilon}(u_{x_k}^\varepsilon)\,\eta^2\,dx\\
&+\frac{\tau}{2}\,\sum_{k=1}^N\int\mathcal{G}_\varepsilon(\nabla u^\varepsilon)^\beta\,\frac{\Big(g_{k,\varepsilon}'(u_{x_k}^\varepsilon)\Big)^2}{g''_{k,\varepsilon}(u^\varepsilon_{x_k})}\,\eta^2\,dx.
\end{split}
\]
We also notice that by \eqref{gG} we have
\[
\sum_{k=1}^N\frac{\Big(g_{k,\varepsilon}'(u_{x_k}^\varepsilon)\Big)^2}{g''_{k,\varepsilon}(u^\varepsilon_{x_k})}\le \overline\delta\,\mathcal{G}_\varepsilon(\nabla u^\varepsilon).
\]
Thus from \eqref{choe3}, we get
\[
\begin{split}
\frac{1}{2}\,\int \mathcal{G}_\varepsilon(\nabla u^\varepsilon)^{\beta+1}\,\eta^2\,dx&\le \left(1+\frac{\overline\sigma\,N}{p_1}\right)\,\int\mathcal{G}_\varepsilon(\nabla u^\varepsilon)^{\beta}\,\eta^2\,dx\\
&+\frac{2}{\tau}\,\left(\frac{\overline\sigma\,\|u^\varepsilon\|_{L^\infty(B)}}{p_1}\right)^2\,\sum_{k=1}^N\int\left|\Big(\mathcal{G}_\varepsilon(\nabla u^\varepsilon)^\frac{\beta}{2}\Big)_{x_k}\right|^2\,g''_{k,\varepsilon}(u_{x_k}^\varepsilon)\,\eta^2\,dx\\
&+\frac{\tau\,\overline\delta}{2}\int\mathcal{G}_\varepsilon(\nabla u^\varepsilon)^{\beta+1}\,\eta^2\\
&+2\,\left(N\,p_N^\frac{p_N-1}{p_N}\right)^2\,\left(\frac{\overline\sigma\,\|u^\varepsilon\|_{L^\infty(B)}}{p_1}\right)^2\,\int \mathcal{G}_\varepsilon(\nabla u^\varepsilon)^{\beta+1-\frac{2}{p_N}}\,|\nabla \eta|^2\,dx\\
&+\frac{\overline\sigma\,\|u^\varepsilon\|_{L^\infty(B)}}{p_1}\,\int \mathcal{G}_\varepsilon(\nabla u^\varepsilon)^\beta\,|f^\varepsilon|\,\eta^2\,dx.
\end{split}
\]
By choosing $\tau=1/(2\,\overline\delta)$, we can absorb again the term containing $\mathcal{G}_\varepsilon(\nabla u^\varepsilon)^{\beta+1}$ on the right-hand side and obtain
\[
\begin{split}
\frac{1}{4}\,\int \mathcal{G}_\varepsilon(\nabla u^\varepsilon)^{\beta+1}\,\eta^2\,dx&\le \left(1+\frac{\overline\sigma\,N}{p_1}\right)\,\int\mathcal{G}_\varepsilon(\nabla u^\varepsilon)^{\beta}\,\eta^2\,dx\\
&+4\,\overline\delta\,\left(\frac{\overline\sigma\,\|u^\varepsilon\|_{L^\infty(B)}}{p_1}\right)^2\,\sum_{k=1}^N\int\left|\Big(\mathcal{G}_\varepsilon(\nabla u^\varepsilon)^\frac{\beta}{2}\Big)_{x_k}\right|^2\,g''_{k,\varepsilon}(u_{x_k}^\varepsilon)\,\eta^2\,dx\\
&+2\,\left(N\,p_N^\frac{p_N-1}{p_N}\right)^2\,\left(\frac{\overline\sigma\,\|u^\varepsilon\|_{L^\infty(B)}}{p_1}\right)^2\,\int \mathcal{G}_\varepsilon(\nabla u^\varepsilon)^{\beta+1-\frac{2}{p_N}}\,|\nabla \eta|^2\,dx\\
&+\frac{\overline\sigma\,\|u^\varepsilon\|_{L^\infty(B)}}{p_1}\,\int \mathcal{G}_\varepsilon(\nabla u^\varepsilon)^\beta\,|f^\varepsilon|\,\eta^2\,dx.
\end{split}
\]
On the right-hand side, we now use the Caccioppoli inequality \eqref{caccioppola} with $\alpha=\beta-1\ge 0$, so to get
\[
\begin{split}
\sum_{k=1}^N\int g''_{k,\varepsilon}(u_{x_k}^\varepsilon)\,\left|\Big(\mathcal{G}_\varepsilon(\nabla u^\varepsilon)^\frac{\beta}{2}\Big)_{x_k}\right|^2\,\eta^2\,dx&\le C\,\beta^2\,\int \mathcal{G}_\varepsilon(\nabla u^\varepsilon)^{\beta+1-\frac{2}{p_N}}\,\Big(|\nabla\eta|^2+\eta\,|D^2 \eta|\Big)\,dx\\
&+C\,\beta^2\,\int |f^\varepsilon|^2\, \mathcal{G}_\varepsilon(\nabla u^\varepsilon)^{\beta-1}\,\eta^2\,dx,\\
\end{split}
\]
for some $C=C(N,p_1,p_N)>0$. 
This finally gives
\[
\begin{split}
\int \mathcal{G}_\varepsilon(\nabla u^\varepsilon)^{\beta+1}\,\eta^2\,dx&\le C\,\int\mathcal{G}_\varepsilon(\nabla u^\varepsilon)^{\beta}\,\eta^2\,dx\\
&+C\,\|u^\varepsilon\|_{L^\infty(B)}^2\,\beta^2\,\int \mathcal{G}_\varepsilon(\nabla u^\varepsilon)^{\beta+1-\frac{2}{p_N}}\,\Big(|\nabla \eta|^2+\eta\,|D^2 \eta|\Big)\,dx\\
&+C\,\|u^\varepsilon\|_{L^\infty(B)}^2\,\beta^2\,\int |f^\varepsilon|^2\, \mathcal{G}_\varepsilon(\nabla u^\varepsilon)^{\beta-1}\,\eta^2\,dx\\
&+C\,\|u^\varepsilon\|_{L^\infty(B)}\,\int \mathcal{G}_\varepsilon(\nabla u^\varepsilon)^\beta\,|f^\varepsilon|\,\eta^2\,dx,
\end{split}
\]
for some $C=C(N,p_1,p_N)>0$. On the first term on the right-hand side, we can use Young's inequality
\[
C\,\int\mathcal{G}_\varepsilon(\nabla u^\varepsilon)^{\beta}\,\eta^2\,dx\le \frac{\tau\,\beta}{\beta+1}\,\int \mathcal{G}_\varepsilon(\nabla u^\varepsilon)^{\beta+1}\,\eta^2\,dx+\frac{C^{\beta+1}}{\tau^\beta\,(\beta+1)}\,\int \eta^2\,dx.
\]
By choosing $\tau=1/2$, we can re-absorb the term $\mathcal{G}_\varepsilon(\nabla u^\varepsilon)^{\beta+1}$. This gives 
\[
\begin{split}
\frac{1}{2}\,\int \mathcal{G}_\varepsilon(\nabla u^\varepsilon)^{\beta+1}\,\eta^2\,dx&\le \frac{2^\beta\,C^{\beta+1}}{\beta+1}\,\int \eta^2\,dx\\
&+C\,\|u^\varepsilon\|_{L^\infty(B)}^2\,\beta^2\,\int \mathcal{G}_\varepsilon(\nabla u^\varepsilon)^{\beta+1-\frac{2}{p_N}}\,\Big(|\nabla \eta|^2+\eta\,|D^2 \eta|\Big)\,dx\\
&+C\,\|u^\varepsilon\|_{L^\infty(B)}^2\,\beta^2\,\int |f^\varepsilon|^2\, \mathcal{G}_\varepsilon(\nabla u^\varepsilon)^{\beta-1}\,\eta^2\,dx\\
&+C\,\|u^\varepsilon\|_{L^\infty(B)}\,\int \mathcal{G}_\varepsilon(\nabla u^\varepsilon)^\beta\,|f^\varepsilon|\,\eta^2\,dx.
\end{split}
\]
We proceed in a similar way for the two terms containing $f^{\varepsilon}$. By using Young's inequality with exponents
$\displaystyle{\frac{\beta+1}{\beta-1},\ \frac{\beta+1}{2}}$
and $\displaystyle{\frac{\beta+1}{\beta},\  \beta+1}$, respectively, 
we get
\[
\begin{split}
C\,\|u^\varepsilon\|_{L^\infty(B)}^2\,\beta^2\,\int |f^\varepsilon|^2\, \mathcal{G}_\varepsilon(\nabla u^\varepsilon)^{\beta-1}\,\eta^2\,dx
&+C\,\|u^\varepsilon\|_{L^\infty(B)}\,\int \mathcal{G}_\varepsilon(\nabla u^\varepsilon)^\beta\,|f^\varepsilon|\,\eta^2\,dx\\
&\le \tau\,\frac{\beta-1}{\beta+1}\,\int \mathcal{G}_\varepsilon(\nabla u^\varepsilon)^{\beta+1}\,\eta^2\,dx\\
&+\frac{2\,\tau^\frac{1-\beta}{2}\,C^\frac{\beta+1}{2}}{\beta+1}\,\|u^\varepsilon\|_{L^\infty(B)}^{\beta+1}\,\beta^{\beta+1}\,\int |f^\varepsilon|^{\beta+1}\eta^2\,dx\\
&+\tau\,\frac{\beta}{\beta+1}\,\int \mathcal{G}_\varepsilon(\nabla u^\varepsilon)^{\beta+1}\,\eta^2\,dx\\
&+\frac{C^{\beta+1}}{\tau^\beta\,(\beta+1)}\,\|u^\varepsilon\|_{L^\infty(B)}^{\beta+1}\,\int |f^\varepsilon|^{\beta+1}\,\eta^2\,dx.
\end{split}
\]
By choosing $\tau=1/8$, we can absorb again the terms containing the power $\beta+1$ of $\mathcal{G}_\varepsilon(\nabla u^\varepsilon)$.
This finally leads to the estimate \eqref{slowmoser}, up to renaming $
\vartheta=\beta+1-2/p_N$. 
This concludes the proof.
\end{proof}

\section{Uniform higher integrability}\label{sec:uniform}
In this section,  we establish a higher integrability estimate for $\nabla u^\varepsilon$, which will eventually lead to the result of Proposition \ref{prop:high}. We assume throughout the section  that $1< p_1\le \dots \le p_N$, without any further restriction.
\begin{prop}
\label{prop:plc}
Let $1<p_1\le p_2\le \dots\le p_N<\infty$. Then for every $B_r\Subset B_R\Subset B$ and every $\gamma\ge 2$,
we have
\[
\begin{split}
\int_{B_r} \mathcal{G}_\varepsilon(\nabla u^\varepsilon)^\gamma\,dx&\le \Gamma_1+\Gamma_2\,\int_{B_R} \mathcal{G}_\varepsilon(\nabla u^\varepsilon)\,dx,
\end{split}
\]
for two constants $\Gamma_1,\Gamma_2>0$ depending on 
\[
N,\, p_N,\, p_1,\,\gamma,\,R,\,R-r,\,\|f^\varepsilon\|_{L^\gamma(B_R)} \mbox{ and } \|u^\varepsilon\|_{L^\infty(B)}.
\]
\end{prop}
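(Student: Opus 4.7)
The plan is a finite iteration of Proposition \ref{prop:slow}. Each application converts a bound on $\int \mathcal{G}_\varepsilon(\nabla u^\varepsilon)^\vartheta$ into a bound on $\int \mathcal{G}_\varepsilon(\nabla u^\varepsilon)^{\vartheta + 2/p_N}$, so $m \sim p_N \gamma/2$ iterations suffice to climb from the uniform $L^1$--control of $\mathcal{G}_\varepsilon(\nabla u^\varepsilon)$---which follows from Lemma \ref{lm:unif} and the pointwise bound $\mathcal{G}_\varepsilon \le 1 + G_\varepsilon$---up to the target $L^\gamma$ integrability.

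Concretely, I would set $\vartheta_0 := \max\{1,\, 2/p_N'\}$ so that $\vartheta_0 \ge 2/p_N'$ as required by Proposition \ref{prop:slow}, pick $m \in \mathbb{N}$ with $\vartheta_0 + 2m/p_N \ge \gamma$, and introduce a decreasing sequence of concentric balls $B_r = B_{R_m} \subset \dots \subset B_{R_0} = B_R$ with $R_j = r+(1-j/m)(R-r)$, together with radial cutoffs $\eta_j \in C^2_0(B_{R_j})$ equal to $1$ on $B_{R_{j+1}}$ and satisfying $|\nabla\eta_j|^2+\eta_j|D^2\eta_j| \le C\,(m/(R-r))^2$. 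Setting $\vartheta_j := \vartheta_0 + 2j/p_N$, an application of Proposition \ref{prop:slow} with $\vartheta=\vartheta_j$ and $\eta=\eta_j$ yields for each $j=0,\dots,m-1$
\[
\int_{B_{R_{j+1}}}\mathcal{G}_\varepsilon(\nabla u^\varepsilon)^{\vartheta_{j+1}}\,dx \le A_j + B_j\int_{B_{R_j}}\mathcal{G}_\varepsilon(\nabla u^\varepsilon)^{\vartheta_j}\,dx + D_j\int_{B_{R_j}}|f^\varepsilon|^{\vartheta_{j+1}}\,dx,
\]
with $A_j,B_j,D_j$ made explicit by the proposition and with H\"older's inequality (using $\vartheta_{j+1} \le \gamma$) replacing $\|f^\varepsilon\|_{L^{\vartheta_{j+1}}(B_R)}$ by a constant multiple of $\|f^\varepsilon\|_{L^\gamma(B_R)}$.

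Chaining these $m$ estimates from $j=m-1$ down to $j=0$ produces a linear bound of the form $\int_{B_r}\mathcal{G}_\varepsilon^{\vartheta_m}\,dx \le \widetilde\Gamma_1 + \widetilde\Gamma_2\int_{B_R}\mathcal{G}_\varepsilon^{\vartheta_0}\,dx$, and a routine H\"older/Young step reduces the left-hand side from level $\vartheta_m \ge \gamma$ to the target level $\gamma$. The base term $\int_{B_R}\mathcal{G}_\varepsilon^{\vartheta_0}$ is handled in two subcases. If $p_N \le 2$, then $\vartheta_0 = 1$ and the base integral is exactly $\int_{B_R}\mathcal{G}_\varepsilon$, estimated via Lemma \ref{lm:unif}: this closes the argument. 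If $p_N > 2$, then $\vartheta_0 = 2/p_N' \in (1,2)$ and an interpolation is required: H\"older between $L^1$ and $L^\gamma$ followed by Young's inequality splits the base integral into $C_\delta\int_{B_R}\mathcal{G}_\varepsilon + \delta\int_{B_R}\mathcal{G}_\varepsilon^\gamma$, and the last term is reabsorbed using Lemma \ref{lm:giusti}; to deploy that lemma, the whole iteration must be reformulated on a variable pair $(s,t)$ of radii rather than on the fixed pair $(r,R)$.

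The main technical difficulty is not any single step but the careful bookkeeping of constants through the $m \approx p_N\gamma/2$ iterations. Each Moser step multiplies the constant by a factor of order $m^2/(R-r)^2$ and by powers of $\vartheta_j$, and these factors must be collected into the single polynomial dependence on $R$ and $R-r$ required by the statement. In the subcase $p_N > 2$, an additional delicacy is to calibrate the smallness parameter $\delta$ of the Young splitting against the $m^2/(R-r)^2$-type constant multiplying the $\int\mathcal{G}_\varepsilon^\gamma$--term to be absorbed, so that the smallness condition of Lemma \ref{lm:giusti} is actually met.
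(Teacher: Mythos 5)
Your overall plan — a finite ``slow'' iteration of Proposition~\ref{prop:slow}, and for $p_N>2$ an additional interpolation/absorption step based on Lemma~\ref{lm:giusti} run over variable radii — is exactly the structure of the paper's proof, up to cosmetic choices such as $\vartheta_0=\max\{1,2/p_N'\}$ versus the paper's $\vartheta_0=2/p_N'$. For $p_N>2$ you interpolate $L^{2/p_N'}$ between $L^1$ and $L^\gamma$ and absorb the $\int \mathcal{G}_\varepsilon^\gamma$ term; the paper instead interpolates between $L^1$ and $L^2$, proves a stand-alone $L^{2/p_N'}$--$L^1$ reverse estimate with a \emph{single} slow Moser step plus Lemma~\ref{lm:giusti}, and only then runs the finite iteration on a fixed pair of balls. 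Both routes close, but the paper's is more modular: you need the Giusti iteration only for one step rather than carrying $(s,t)$ through all $m$ applications of Proposition~\ref{prop:slow}.

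There is, however, one genuine gap in your bookkeeping. You pick the smallest $m$ with $\vartheta_m=\vartheta_0+2m/p_N\ge\gamma$, but then every application of Proposition~\ref{prop:slow} for $j=0,\dots,m-1$ produces a term $\int|f^\varepsilon|^{\vartheta_{j+1}}$, and you Hölder it down to $\|f^\varepsilon\|_{L^\gamma}$ ``using $\vartheta_{j+1}\le\gamma$.'' For $j=m-1$ this requires $\vartheta_m\le\gamma$, i.e.\ $\vartheta_m=\gamma$ exactly, which holds only for the special values of $\gamma$ with $\frac{p_N}{2}(\gamma-\vartheta_0)\in\mathbb{N}$. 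In the generic case $\vartheta_m>\gamma$ your argument leaves a $\|f^\varepsilon\|_{L^{\vartheta_m}(B_R)}$-dependence, which is \emph{not} uniformly bounded in $\varepsilon$ (unlike $\|f^\varepsilon\|_{L^\gamma(B_R)}$), so the conclusion would not have the constant dependence claimed in the statement and would be useless when passing to the limit. The paper resolves this by an extra case (its Case A.2): stop the dyadic iteration at the last index $i_0$ with $\vartheta_{i_0+1}\le\gamma$, and run one additional application of Proposition~\ref{prop:slow} with the \emph{calibrated} exponent $\vartheta=\gamma-2/p_N$, which is admissible since $\gamma\ge 2$ and lands exactly on $\gamma$, producing $\int|f^\varepsilon|^\gamma$ as required. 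Your ``routine Hölder/Young step reducing the left-hand side from level $\vartheta_m$ to $\gamma$'' is about the LHS and is harmless; the obstruction is on the RHS $f$-term and is not addressed. A minor further inaccuracy: in the case $p_N\le 2$ you should \emph{not} invoke Lemma~\ref{lm:unif} on the base integral $\int_{B_R}\mathcal{G}_\varepsilon$ — you want to keep it explicit so that the final estimate has the form $\Gamma_1+\Gamma_2\int_{B_R}\mathcal{G}_\varepsilon$ stated in the proposition.
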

\begin{proof}
We take $\gamma\ge 2$ and define the sequence of exponents
\[
\vartheta_0=\frac{2}{p_N'},\qquad \vartheta_{i+1}=\vartheta_i+\frac{2}{p_N}=\frac{2}{p_N'}+(i+1)\,\frac{2}{p_N}=2+i\,\frac{2}{p_N},\ \mbox{ for } i\in \mathbb{N}.
\]
We set 
\begin{equation}
\label{i0}
i_0=\max\left\{i\in\mathbb{N}\, :\, i\le \frac{p_N}{2}\,\left(\gamma-2\right)\right\}.
\end{equation}
This in particular implies that 
\[
\vartheta_{i_0+1}\le \gamma<\vartheta_{i_0+2}.
\]
We now need to distinguish various cases, according to the values of $p_N$ and $\gamma$.
\vskip.2cm\noindent
{\bf Case A.1.} Here we assume that 
\begin{equation}
\label{casoa1}
p_N\le 2\qquad \mbox{ and }\qquad \frac{p_N}{2}\,(\gamma-2)\in\mathbb{N}.
\end{equation}
This is the simplest case: we get the estimate by iterating Proposition \ref{prop:slow} with exponents $\vartheta=\vartheta_i$ and  a suitable sequence of shrinking balls. 
\par
More precisely, we fix $B_r$ and $B_R$ as in the statement and
define the sequence of decreasing radii
\[
r_i=R-i\,\frac{R-r}{i_0+1},\qquad \mbox{ for } i=0,\dots,i_0+1.
\]
Accordingly, we take a cut-off function $\eta_i\in C^2_0(B_{r_i})$ for $i=0,\dots,i_0$, such that 
\[
0\le \eta_i\le 1,\qquad \eta_i \equiv 1 \mbox{ on } B_{r_{i+1}}, \qquad |\nabla \eta_i|^2+|D^2 \eta_i|\le \frac{C\,(i_0+1)^2}{(R-r)^2}.
\]
By applying \eqref{slowmoser} with $\vartheta=\vartheta_i$, $\eta=\eta_i$ and using the properties of the cut-off function, we get
\[
\begin{split}
\int_{B_{r_{i+1}}} \mathcal{G}_\varepsilon(\nabla u^\varepsilon)^{\vartheta_{i+1}}\,dx&\le \frac{C^{\vartheta_i}}{\vartheta_i}\,|B_R|+\frac{C\,\vartheta_i^2\,(i_0+1)^2}{(R-r)^2}\,\|u^\varepsilon\|^2_{L^\infty(B)}\,\int_{B_{r_i}} \mathcal{G}_\varepsilon(\nabla u^\varepsilon)^{\vartheta_i}\,dx\\
&+C^{\vartheta_i}\,\vartheta_i^{\vartheta_i}\,\|u^\varepsilon\|_{L^\infty(B)}^{\vartheta_{i+1}}\,\int_{B_{r_i}} |f^\varepsilon|^{\vartheta_{i+1}}\,dx,
\end{split}
\]
for a constant $C=C(N,p_1,p_N)>0$. By using H\"older's inequality on the right-hand side, we also get
\begin{equation}
\label{slowmoseri}
\begin{split}
\int_{B_{r_{i+1}}} \mathcal{G}_\varepsilon(\nabla u^\varepsilon)^{\vartheta_{i+1}}\,dx
&\le \frac{C^{\vartheta_i}}{\vartheta_i}\,|B_R|+\frac{C\,\vartheta_i^2\,(i_0+1)^2}{(R-r)^2}\,\|u^\varepsilon\|^2_{L^\infty(B)}\,\int_{B_{r_i}} \mathcal{G}_\varepsilon(\nabla u^\varepsilon)^{\vartheta_i}\,dx\\
&+C^{\vartheta_i}\,\vartheta_i^{\vartheta_i}\,|B_R|^{1-\frac{\vartheta_{i+1}}{\gamma}}\,\|u^\varepsilon\|_{L^\infty(B)}^{\vartheta_{i+1}}\,\left(\int_{B_{R}} |f^\varepsilon|^\gamma\,dx\right)^\frac{\vartheta_{i+1}}{\gamma}.
\end{split}
\end{equation}
Starting from $i=0$ and iterating \eqref{slowmoseri} from $0$ to $i_0$, we get
\begin{equation}
\label{mother}
\begin{split}
\int_{B_{r}} \mathcal{G}_\varepsilon(\nabla u^\varepsilon)^{\vartheta_{i_0+1}}\,dx&\le M+D_0\,\int_{B_R} \mathcal{G}_\varepsilon(\nabla u^\varepsilon)^\frac{2}{p_N'}\,dx.
\end{split}
\end{equation}
where we set for notational simplicity for every natural number $0\le k\le i_0$
\[
D_{k}=\left[\prod_{i=k}^{i_0}\frac{C\,\vartheta_i^2\,(i_0+1)^2}{(R-r)^2}\,\|u^\varepsilon\|^2_{L^\infty(B)}\right]=\left[\frac{C\,(i_0+1)^2}{(R-r)^2}\,\|u^\varepsilon\|^2_{L^\infty(B)}\right]^{i_0-k+1}\,\prod_{i=k}^{i_0} \vartheta_i^2,
\]
while
\[
M=\sum_{i=0}^{i_0}\left(\frac{C^{\vartheta_i}}{\vartheta_i}\,|B_R|+C^{\vartheta_i}\,\vartheta_i^{\vartheta_i}\,|B_R|^{1-\frac{\vartheta_{i+1}}{\gamma}}\,\left(\|u^\varepsilon\|_{L^\infty(B)}\,\|f\|_{L^\gamma(B_R)}\right)^{\vartheta_{i+1}}\right)\,D_{i+1},
\]
with the notational agreement that $D_{i_0+1}=1$. Their precise expression is not very important, but
we point out that $D_0$ and $M$ depend only on
\[
N,p_N,p_1,\gamma,R,R-r,\|f_\varepsilon\|_{L^\gamma(B_R)}\qquad \mbox{ and }\qquad \|u_\varepsilon\|_{L^\infty(B)}.
\]
Thanks to the assumption \eqref{casoa1} and recalling that $\mathcal{G}_{\varepsilon}\ge 1$, we get 
\[
\vartheta_{i_0+1}=\gamma\qquad \mbox{ and }\qquad  \int_{B_R} \mathcal{G}_\varepsilon(\nabla u^\varepsilon)^\frac{2}{p_N'}\,dx\le \int_{B_R} \mathcal{G}_\varepsilon(\nabla u^\varepsilon)\,dx,
\]
thus the desired conclusion follows from \eqref{mother}.
\vskip.2cm\noindent
{\bf Case A.2}. Here we assume that 
\[
p_N\le 2\qquad \mbox{ and }\qquad \frac{p_N}{2}\,(\gamma-2)\not\in\mathbb{N}.
\]
In light of the second assumption, we have
\[
\vartheta_{i_0+1}<\gamma<\vartheta_{i_0+2},
\]
where $i_0$ is the index defined in \eqref{i0}.
Thus in this case, we need an extra step of the iteration, by suitably adapting the choice of the exponent $\vartheta$. We take $B_r\Subset B_\varrho\Subset B_R$ where 
\[
\varrho=\frac{R+r}{2},
\]
and define the sequence of decreasing radii
\[
r_i=R-i\,\frac{R-\varrho}{i_0+1},\qquad \mbox{ for } i=0,\dots,i_0+1.
\]
We take a cut-off function $\eta_i\in C^2_0(B_{r_i})$ for $i=0,\dots,i_0$, such that 
\[
0\le \eta_i\le 1,\qquad \eta_i \equiv 1 \mbox{ on } B_{r_{i+1}}, \qquad |\nabla \eta_i|^2+|D^2 \eta_i|\le \frac{C\,(i_0+1)^2}{(R-\rho)^2}.
\]
By proceeding as above, we now get
 \begin{equation}
\label{mother2}
\begin{split}
\int_{B_{\varrho}} \mathcal{G}_\varepsilon(\nabla u^\varepsilon)^{\vartheta_{i_0+1}}\,dx&\le M+D_0\,\int_{B_R} \mathcal{G}_\varepsilon(\nabla u^\varepsilon)^\frac{2}{p_N'}\,dx.
\end{split}
\end{equation}
The last term can be estimated from above, by using again that $2/p_N'\le 1$. However,
in order to reach the desired exponent $\gamma$, we need to apply \eqref{slowmoser} once more. We take a cut-off function $\eta\in C^2_0(B_\varrho)$ such that 
\[
0\le \eta\le 1,\qquad \eta \equiv 1 \mbox{ on } B_{r}, \qquad |\nabla \eta|^2+|D^2 \eta|\le \frac{C\,(i_0+1)^2}{(\varrho-r)^2}.
\]
By applying \eqref{slowmoser} with\footnote{Observe that such a choice is feasible, since 
\[
\gamma-\frac{2}{p_N}\ge \frac{2}{p_N'}\qquad \Longleftrightarrow \qquad \gamma\ge 2.
\]} 
\[
\vartheta=\gamma-\frac{2}{p_N},
\]
and the cut-off function above, we get
\[
\begin{split}
\int_{B_r} \mathcal{G}_\varepsilon(\nabla u^\varepsilon)^\gamma\,dx&\le C\,|B_R|^N+C\,\|u^\varepsilon\|_{L^\infty(B)}^2\,\int_{B_\varrho} \mathcal{G}_\varepsilon(\nabla u^\varepsilon)^{\gamma-\frac{2}{p_N}}\,dx\\
&+C\,\|u^\varepsilon\|_{L^\infty(B)}^{\gamma}\,\int_{B_R} |f^\varepsilon|^\gamma\,dx.
\end{split}
\]
On the right hand side, the term containing $\mathcal{G}_\varepsilon(\nabla u^\varepsilon)$ is under control, since by construction
\[
\gamma-\frac{2}{p_N}<\vartheta_{i_0+2}-\frac{2}{p_N}=\vartheta_{i_0+1}.
\]
Thus we have 
\[
\int_{B_\varrho} \mathcal{G}_\varepsilon(\nabla u^\varepsilon)^{\gamma-\frac{2}{p_N}}\,dx\le \int_{B_\varrho} \mathcal{G}_\varepsilon(\nabla u^\varepsilon)^{\vartheta_{i_0+1}}\,dx,
\]
and the last term can be estimated by \eqref{mother2}.
\vskip.2cm\noindent
{\bf Case B}. Here we assume that $p_N>2$. The proof goes exactly as before, so, for every $r<R$ with $B_R\Subset B$ we certainly have
\[
\begin{split}
\int_{B_r} \mathcal{G}_\varepsilon(\nabla u^\varepsilon)^\gamma\,dx&\le \Gamma_1+\Gamma_2\,\int_{B_{\frac{R+r}{2}}} \mathcal{G}_\varepsilon(\nabla u^\varepsilon)^\frac{2}{p_N'}\,dx,
\end{split}
\]
 but now the major difference is that we need to estimate the integral on the right hand side.
Indeed, in this case $2/p_N'>1$ and we can not directly assure that this term is bounded, uniformly in $\varepsilon$.
We need to use an interpolation trick to get a reverse $L^{2/p_N'}-L^1$ estimate on $\mathcal{G}_\varepsilon(\nabla u^\varepsilon)$. We denote $\varrho=(R+r)/2$ and we observe that 
\[
1<\frac{2}{p_N'}<2,
\]
thus by interpolation in Lebesgue spaces, we get for every $\varrho\le s<t\le R$
\begin{equation}
\label{chiara}
\int_{B_s} \mathcal{G}_\varepsilon(\nabla u^\varepsilon)^\frac{2}{p_N'}\,dx\le \left(\int_{B_s} \mathcal{G}_\varepsilon(\nabla u^\varepsilon)\,dx\right)^\frac{2}{p_N}\,\left(\int_{B_s} \mathcal{G}_\varepsilon(\nabla u^\varepsilon)^2\,dx\right)^\frac{p_N-2}{p_N}.
\end{equation}
The $L^2$ norm on the right-hand side can in turn be estimated by means of \eqref{slowmoser}, observing that
\[
2=\frac{2}{p_N'}+\frac{2}{p_N}.
\]
By taking $\vartheta=2/p_N'$ and a cut-off function $\eta\in C^2_0(B_t)$ such that 
\[
0\le \eta\le 1,\qquad \eta \equiv 1 \mbox{ on } B_s, \qquad |\nabla \eta|^2+|D^2 \eta|\le \frac{C}{(t-s)^2},
\]
we thus get from \eqref{slowmoser}
\[
\begin{split}
\int_{B_s} \mathcal{G}_\varepsilon(\nabla u^\varepsilon)^2\,dx&\le C\,|B_R|+\frac{C}{(t-s)^2}\,\|u^\varepsilon\|^2_{L^\infty(B)}\,\int_{B_t} \mathcal{G}_\varepsilon(\nabla u^\varepsilon)^\frac{2}{p_N'}\,dx\\
&+ C\,\|u^\varepsilon\|_{L^\infty(B)}^2\,\int_{B_R} |f^\varepsilon|^2\,dx\\
&\le C\,|B_R|+\frac{C}{(t-s)^2}\,\|u^\varepsilon\|^2_{L^\infty(B)}\,\int_{B_t} \mathcal{G}_\varepsilon(\nabla u^\varepsilon)^\frac{2}{p_N'}\,dx\\
&+ C\,|B_R|^{1-\frac{2}{\gamma}}\,\|u^\varepsilon\|_{L^\infty(B)}^2\,\left(\int_{B_R} |f^\varepsilon|^\gamma\,dx\right)^\frac{2}{\gamma}.
\end{split}
\]
for $C=C(N,p_1,p_N)>0$. We spend this information into \eqref{chiara} and use the subadditivity of $\tau\mapsto \tau^{(p_N-2)/p_N}$,
so to get
\[
\begin{split}
\int_{B_s} \mathcal{G}_\varepsilon(\nabla u^\varepsilon)^\frac{2}{p_N'}\,dx&\le \left(\int_{B_s} \mathcal{G}_\varepsilon(\nabla u^\varepsilon)\,dx\right)^\frac{2}{p_N}\,\left(C\,|B_R|\right)^\frac{p_N-2}{p_N}\\
&+\left(\int_{B_s} \mathcal{G}_\varepsilon(\nabla u^\varepsilon)\,dx\right)^\frac{2}{p_N}\,\left(\frac{C}{(t-s)^2}\,\|u^\varepsilon\|^2_{L^\infty(B)}\right)^\frac{p_N-2}{p_N}\,\left(\int_{B_t} \mathcal{G}_\varepsilon(\nabla u^\varepsilon)^\frac{2}{p_N'}\,dx\right)^\frac{p_N-2}{p_N}\\
&+\left(\int_{B_s} \mathcal{G}_\varepsilon(\nabla u^\varepsilon)\,dx\right)^\frac{2}{p_N}\,\left(C\,|B_R|^{1-\frac{2}{\gamma}}\,\left(\|u^\varepsilon\|_{L^\infty(B)}\,\|f^\varepsilon\|_{L^\gamma(B_R)}\right)^2\right)^\frac{p_N-2}{p_N}.
\end{split}
\]
For the second term on the right-hand side, we apply Young's inequality with conjugate exponents $p_N/2$ and $p_N/(p_N-2)$.
We get
\[
\begin{split}
\int_{B_s} \mathcal{G}_\varepsilon(\nabla u^\varepsilon)^\frac{2}{p_N'}\,dx&\le \left(\int_{B_R} \mathcal{G}_\varepsilon(\nabla u^\varepsilon)\,dx\right)^\frac{2}{p_N}\,\left(C\,|B_R|\right)^\frac{p_N-2}{p_N}\\
&+\frac{2}{p_N}\,\int_{B_R} \mathcal{G}_\varepsilon(\nabla u^\varepsilon)\,dx\,\left(\frac{C}{(t-s)^2}\,\|u^\varepsilon\|^2_{L^\infty(B)}\right)^\frac{p_N-2}{2}\\
&+\frac{p_N-2}{p_N}\,\int_{B_t} \mathcal{G}_\varepsilon(\nabla u^\varepsilon)^\frac{2}{p_N'}\,dx\\
&+\left(\int_{B_R} \mathcal{G}_\varepsilon(\nabla u^\varepsilon)\,dx\right)^\frac{2}{p_N}\,\left(C\,|B_R|^{1-\frac{2}{\gamma}}\,\left(\|u^\varepsilon\|_{L^\infty(B)}\,\|f^\varepsilon\|_{L^\gamma(B_R)}\right)^2\right)^\frac{p_N-2}{p_N}.
\end{split}
\]
We now use Lemma \ref{lm:giusti} with the choices
\[
Z(s)=\int_{B_s} \mathcal{G}_\varepsilon(\nabla u^\varepsilon)^\frac{2}{p_N'}\,dx,\qquad \vartheta=\frac{p_N-2}{p_N},\qquad \mathcal{A}=\frac{2}{p_N}\,\int_{B_R} \mathcal{G}_\varepsilon(\nabla u^\varepsilon)\,dx\,\left(C\,\|u^\varepsilon\|^2_{L^\infty(B)}\right)^\frac{p_N-2}{2},
\]
\[
\alpha_0=p_N-2,\qquad \mathcal{B}=0,
\]
and
\[
\mathcal{C}= \left(\int_{B_R} \mathcal{G}_\varepsilon(\nabla u^\varepsilon)\,dx\right)^\frac{2}{p_N}\,\left[\left(C\,|B_R|\right)^\frac{p_N-2}{p_N}+\left(C\,|B_R|^{1-\frac{2}{\gamma}}\,\left(\|u^\varepsilon\|_{L^\infty(B)}\,\|f^\varepsilon\|_{L^\gamma(B_R)}\right)^2\right)^\frac{p_N-2}{p_N}\right],
\]
in order to absorb the penultimate integral.
This permits to conclude that 
\[
\begin{split}
\int_{B_\varrho} \mathcal{G}_\varepsilon(\nabla u^\varepsilon)^\frac{2}{p_N'}\,dx&\le \widetilde{C}\,\left(\int_{B_R} \mathcal{G}_\varepsilon(\nabla u^\varepsilon)\,dx\right)^\frac{2}{p_N}\,\left(C\,|B_R|\right)^\frac{p_N-2}{p_N}\\
&+\widetilde{C}\,\frac{2}{p_N}\, \frac{1}{(R-r)^{p_N-2}}\int_{B_R} \mathcal{G}_\varepsilon(\nabla u^\varepsilon)\,dx\,\left(C\,\|u^\varepsilon\|^2_{L^\infty(B)}\right)^\frac{p_N-2}{2}\\
&+\widetilde{C}\,\left(\int_{B_R} \mathcal{G}_\varepsilon(\nabla u^\varepsilon)\,dx\right)^\frac{2}{p_N}\,\left(|B_R|^{1-\frac{2}{\gamma}}\,\left(\|u^\varepsilon\|_{L^\infty(B)}\,\|f^\varepsilon\|_{L^\gamma(B_R)}\right)^2\right)^\frac{p_N-2}{p_N}.
\end{split}
\]
This gives the claimed $L^{2/p_N'}-L^1$ estimate on $\mathcal{G}_\varepsilon(\nabla u^\varepsilon)$. The desired conclusion now easily follows. We leave the details to the reader.
\end{proof}
\begin{oss}[Quality of the constants]
\label{oss-gamma12}
For future references, it is important to notice that the two constants $\Gamma_1$ and $\Gamma_2$ in the previous statement are uniformly bounded from above, whenever there exists a constant $C\ge 1$ such that
\[
\|f^\varepsilon\|_{L^\gamma(B_R)}+\|u^\varepsilon\|_{L^\infty(B)}\le C,\qquad \mbox{ for every } 0<\varepsilon\le \varepsilon_0,
\]
and
\[
R-r\ge \frac{1}{C}.
\] 
On the contrary, we see from the proof above that
\[
\lim_{\gamma\to \infty} \Gamma_i=+\infty,\qquad \mbox{ for } i=1,2,
\]
with an exponential rate of divergence.
\end{oss}

\section{Uniform Lipschitz bound}\label{sec:Lip}
We now establish a local $L^\infty$ estimate for $\nabla u^\varepsilon$: this time, this will lead to Theorem L.
\begin{prop}
\label{prop:lipschitz}
Let $1<p_1\le \dots\le p_N\le 2$ and $0<\varepsilon\le \varepsilon_0$. Then for every pair of concentric balls $B_{r}\Subset B_{R}\Subset B$ and every $q>N$, 
we have
\[
\begin{split}
\|\mathcal{G}_\varepsilon(\nabla u^\varepsilon)\|_{L^\infty(B_{r})}&\le C\,\left[\frac{1}{(R-r)^\frac{N\,q}{q-N}}\,\left(\int_{B_{R}} \mathcal{G}_\varepsilon(\nabla u^\varepsilon)^q\,dx\right)^\frac{N}{q-N}+\|f^\varepsilon\|_{L^q(B_{R})}^\frac{N\,q}{q-N}\right]\,\left\|\mathcal{G}_\varepsilon(\nabla u^\varepsilon)\right\|_{L^1(B_{R})},
\end{split}
\]
for some $C=C(N,p_N,p_1,q)>0$.
\end{prop}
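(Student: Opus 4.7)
My plan is to run a Moser--type iteration exploiting the Caccioppoli inequality of Proposition \ref{prop:caccioespilon2}. The first step is the subquadratic \emph{absorption trick}, specific to the assumption $p_N\le 2$: on the region where $\mathcal{G}_\varepsilon(\nabla u^\varepsilon)^{(\alpha+1)/2}$ has non-trivial derivatives we have $G_\varepsilon(\nabla u^\varepsilon)\ge 1$, so the lower bound \eqref{g2} combined with $(\varepsilon+t^2)^{p_i/2}\le p_N\,G_\varepsilon(\nabla u^\varepsilon)$ yields $g''_{i,\varepsilon}(u^\varepsilon_{x_i})\ge c(p_1,p_N)\,\mathcal{G}_\varepsilon(\nabla u^\varepsilon)^{(p_1-2)/p_1}$, the worst exponent being attained at $p_i=p_1$. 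Substituting this into \eqref{caccioppola} and setting $w:=\mathcal{G}_\varepsilon(\nabla u^\varepsilon)^{(\alpha+2)/2-1/p_1}$, the orthotropic weights disappear and one obtains
\[
\int|\nabla w|^{2}\,\eta^{2}\,dx\le C(\alpha+1)^{2}\!\int\mathcal{G}_\varepsilon(\nabla u^\varepsilon)^{\alpha+2-2/p_N}\bigl(|\nabla\eta|^{2}+\eta|D^{2}\eta|\bigr)\,dx+C(\alpha+1)^{2}\!\int|f^\varepsilon|^{2}\,\mathcal{G}_\varepsilon(\nabla u^\varepsilon)^{\alpha}\,\eta^{2}\,dx.
\]

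Next, I apply Sobolev's inequality to $w\eta$, producing $\|w\eta\|_{L^{2^{*}}}^{2}$ on the left-hand side. For the non-homogeneous term I exploit $\mathcal{G}_\varepsilon\ge 1$ to bound $\mathcal{G}_\varepsilon^{\alpha}\le \mathcal{G}_\varepsilon^{\alpha+2-2/p_1}=w^{2}$, then apply H\"older with exponents $q/2$ and $q/(q-2)$:
\[
\int|f^\varepsilon|^{2}w^{2}\eta^{2}\,dx\le\|f^\varepsilon\|_{L^{q}(B_R)}^{2}\,\|w\eta\|_{L^{2q/(q-2)}}^{2}.
\]
The hypothesis $q>N$ is exactly what makes $2q/(q-2)<2^{*}$, allowing interpolation between $L^{2}$ and $L^{2^{*}}$ with exponent $N/q$, followed by Young's inequality, so that part of $\|w\eta\|_{L^{2^{*}}}^{2}$ can be absorbed into the left-hand side. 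The outcome is a reverse H\"older inequality of Moser type in which the factor $\|f^\varepsilon\|_{L^{q}(B_R)}^{2q/(q-N)}$ already matches the exponent appearing in the statement.

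Finally, I iterate this reverse H\"older inequality on a geometric sequence of exponents $\alpha_{k}$, chosen so that the $L^{2^{*}}$-exponent of $w$ at step $k$ matches the base exponent at step $k+1$ (with multiplicative gain $\lambda=N/(N-2)>1$), together with a sequence of shrinking balls $r_{k}\downarrow r$ obeying $r_k-r_{k+1}\sim (R-r)/2^{k}$. Passing to $k\to\infty$ gives $\|\mathcal{G}_\varepsilon(\nabla u^\varepsilon)\|_{L^{\infty}(B_{r})}$ bounded by a fixed $L^{q_{0}}$-norm of $\mathcal{G}_\varepsilon(\nabla u^\varepsilon)$ on $B_{R}$, times the expected prefactor in $(R-r)^{-1}$ and $\|f^\varepsilon\|_{L^{q}(B_R)}$. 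A last H\"older-type interpolation of the form $\|\mathcal{G}_\varepsilon\|_{L^{q_{0}}}\le\|\mathcal{G}_\varepsilon\|_{L^{q}}^{\theta}\|\mathcal{G}_\varepsilon\|_{L^{1}}^{1-\theta}$, tuned so that the combined exponents collapse to $Nq/(q-N)$ and $N/(q-N)$, yields the stated form. The main technical challenge lies in the bookkeeping: controlling the product $\prod_{k}c_{k}^{1/\lambda^{k}}$ of the Moser constants $c_{k}\sim(\alpha_k+1)^2[4^k(R-r)^{-2}+\|f^\varepsilon\|_{L^{q}}^{2q/(q-N)}]$ along an infinite iteration, and checking that the minor mismatch between $\alpha+2-2/p_N$ and $\alpha+2-2/p_1$ (which differ by the fixed amount $2/p_1-2/p_N$, absorbed thanks to $\mathcal{G}_\varepsilon\ge 1$) does not alter the geometric nature of the scheme.
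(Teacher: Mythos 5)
Your opening moves are correct and coincide with the paper's: the sub‑quadratic absorption $g''_{i,\varepsilon}(u^\varepsilon_{x_i})\gtrsim\mathcal{G}_\varepsilon(\nabla u^\varepsilon)^{(p_1-2)/p_1}$, the substitution $w=\mathcal{G}_\varepsilon(\nabla u^\varepsilon)^{(\alpha+2)/2-1/p_1}$, the Sobolev inequality, and the H\"older with exponents $q/2$, $q/(q-2)$ on the $f$-term are all in the paper's proof. The scheme diverges, however, at the very place you label ``the main technical challenge,'' and the divergence is a genuine gap.

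You claim the exponent mismatch between the LHS power $\alpha+2-2/p_1$ and the RHS power $\alpha+2-2/p_N$ is ``absorbed thanks to $\mathcal{G}_\varepsilon\ge 1$.'' The direction of the inequality is the wrong one: since $p_1\le p_N$ we have $2/p_1\ge 2/p_N$, hence $\alpha+2-2/p_N\ge\alpha+2-2/p_1$ and $\mathcal{G}_\varepsilon\ge 1$ gives $\mathcal{G}_\varepsilon^{\alpha+2-2/p_N}\ge\mathcal{G}_\varepsilon^{\alpha+2-2/p_1}$, not $\le$. You cannot replace the homogeneous RHS term by $\int w^2|\nabla\eta|^2$. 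If you try to iterate anyway, the recursion for the effective exponent becomes affine, $\tilde\theta_{k+1}=\frac{2^*}{2}\tilde\theta_k-\bigl(\tfrac{2}{p_1}-\tfrac{2}{p_N}\bigr)$, rather than multiplicative; one is then forced to track the drift $\prod_k(1+\delta/\tilde\theta_k)$, which is finite but $\ne 1$, so the final estimate bounds $\|\mathcal{G}_\varepsilon\|_{L^\infty}$ by a non-unit power of $\|\mathcal{G}_\varepsilon\|_{L^{q_0}}$, with a $q_0$ that must be chosen large (indeed $\tilde\theta_0>\delta(N-2)/2$, which depends on $N$ and the ratio $p_N/p_1$), possibly $q_0>q$, killing the planned $L^q$--$L^1$ interpolation. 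Moreover your plan nowhere produces the factor $(\int_{B_R}\mathcal{G}_\varepsilon^q)^{N/(q-N)}$ appearing in the statement.

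The paper's resolution is precisely to avoid the affine recursion: it applies H\"older with the same pair $q/(q-2)$, $q/2$ also to the \emph{homogeneous} term, splitting
\[
\int_{B_R}\mathcal{G}_\varepsilon^{\alpha+2-2/p_N}\,dx\le\left(\int_{B_R}\mathcal{G}_\varepsilon^{\alpha\frac{q}{q-2}}\,dx\right)^{\frac{q-2}{q}}\left(\int_{B_R}\mathcal{G}_\varepsilon^{\frac{q}{p_N'}}\,dx\right)^{\frac{2}{q}},
\]
and then uses $\mathcal{G}_\varepsilon^{\alpha q/(q-2)}\le\bigl(\mathcal{G}_\varepsilon^{\alpha+2-2/p_1}\bigr)^{q/(q-2)}$ (this absorption goes in the \emph{right} direction since $\alpha\le\alpha+2-2/p_1$). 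This extracts a fixed factor $\bigl(\int\mathcal{G}_\varepsilon^{q/p_N'}\bigr)^{2/q}$ that enters each step as a constant, yields the pure geometric recursion $\theta_{i+1}=\frac{2^*}{2}\frac{q-2}{q}\theta_i$ (multiplier $>1$ precisely when $q>N$, replacing your interpolation argument), and after iterating produces exactly the $\int\mathcal{G}_\varepsilon^q$ term of the statement. Finally, for the last step the paper does not interpolate between $L^q$ and $L^1$ as you propose: it interpolates $\|\mathcal{G}_\varepsilon\|_{L^{q/(q-2)}}\le\|\mathcal{G}_\varepsilon\|_{L^\infty}^{2/q}\|\mathcal{G}_\varepsilon\|_{L^1}^{(q-2)/q}$ between $L^\infty$ and $L^1$, applies Young, and then uses the hole-filling Lemma \ref{lm:giusti} to absorb the residual $\|\mathcal{G}_\varepsilon\|_{L^\infty(B_R)}$, which is what makes $\|\mathcal{G}_\varepsilon\|_{L^1}$ appear to the first power.
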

\begin{proof}
We will use a Moser's iteration scheme, in order to get the claimed estimate.
By \eqref{g2}, for every $i=1,\dots,N$ we have 
\[
\begin{split}
g''_{i,\varepsilon}(u_{x_i}^\varepsilon)\ge 
(p_i-1)\,(\varepsilon+|u^\varepsilon_{x_i}|^2)^\frac{p_i-2}{2}&=(p_i-1)\,\Big(p_i\,g_{i,\varepsilon}(u^\varepsilon_{x_i})\Big)^\frac{p_i-2}{p_i}\\
&\ge (p_i-1)\,\Big(p_i\,G_{\varepsilon}(\nabla u^\varepsilon)\Big)^\frac{p_i-2}{p_i}\ge (p_i-1)\,\Big(p_i\,\mathcal{G}_\varepsilon(\nabla u^\varepsilon)\Big)^\frac{p_i-2}{p_i},
\end{split}
\]
thanks to the fact that $p_i\le 2$, for every $i=1,\dots,N$.
We can further estimate the last term from below as follows
\[
g''_{i,\varepsilon}(u_{x_i}^\varepsilon)\ge (p_1-1)\,(p_N)^\frac{p_1-2}{p_1}\,\mathcal{G}_\varepsilon(\nabla u^\varepsilon)^\frac{p_1-2}{p_1}.
\]
By using this lower bound in \eqref{caccioppola}, we get
\begin{equation}
\label{moser1}
\begin{split}
\sum_{i=1}^N\int \mathcal{G}_\varepsilon(\nabla u^\varepsilon)^\frac{p_1-2}{p_1}&\,\left|\Big(\mathcal{G}_\varepsilon(\nabla u^\varepsilon)^\frac{\alpha+1}{2}\Big)_{x_i}\right|^2\,\eta^2\,dx\\&\le C\,(\alpha+1)^2\int \mathcal{G}_\varepsilon(\nabla u^\varepsilon)^{\alpha+2-\frac{2}{p_N}}\,\Big(|\nabla\eta|^2+\eta\,|D^2 \eta|\Big)\,dx\\
&+C\,(\alpha+1)^2\,\int |f^\varepsilon|^2\, \mathcal{G}_\varepsilon(\nabla u^\varepsilon)^\alpha\,\eta^2\,dx,
\end{split}
\end{equation}
for some $C=C(N,p_1,p_N)>0$. With simple algebraic manipulations, for every $\alpha\ge 0$
we have
\[
\begin{split}
\mathcal{G}_\varepsilon(\nabla u^\varepsilon)^\frac{p_1-2}{p_1}\,\left|\Big(\mathcal{G}_\varepsilon(\nabla u^\varepsilon)^\frac{\alpha+1}{2}\Big)_{x_i}\right|^2&=\left(\frac{\alpha+1}{2}\right)^2\,\frac{1}{\left(\dfrac{\alpha+2}{2}-\dfrac{1}{p_1}\right)^2}\,\left|\Big(\mathcal{G}_\varepsilon(\nabla u^\varepsilon)^{\frac{\alpha+2}{2}-\frac{1}{p_1}}\Big)_{x_i}\right|^2\\
&\ge \,\left|\Big(\mathcal{G}_\varepsilon(\nabla u^\varepsilon)^{\frac{\alpha+2}{2}-\frac{1}{p_1}}\Big)_{x_i}\right|^2.
\end{split}
\]
Thus from \eqref{moser1} we obtain
\begin{equation}
\label{moser2}
\begin{split}
\int \left|\nabla\Big(\mathcal{G}_\varepsilon(\nabla u^\varepsilon)^{\frac{\alpha+2}{2}-\frac{1}{p_1}}\Big)\right|^2\,\eta^2\,dx&\le C\,(\alpha+1)^2\int \mathcal{G}_\varepsilon(\nabla u^\varepsilon)^{\alpha+2-\frac{2}{p_N}}\,\Big(|\nabla\eta|^2+\eta\,|D^2 \eta|\Big)\,dx\\
&+C\,(\alpha+1)^2\,\int |f^\varepsilon|^2\, \mathcal{G}_\varepsilon(\nabla u^\varepsilon)^\alpha\,\eta^2\,dx,
\end{split}
\end{equation}
for some $C=C(N,p_1,p_N)>0$. By adding on both sides the term
\[
\int \Big(\mathcal{G}_\varepsilon(\nabla u^\varepsilon)^{\frac{\alpha+2}{2}-\frac{1}{p_1}}\Big)^2\,\left|\nabla\eta\right|^2\,dx,
\]
and using again that $\mathcal{G}_{\varepsilon}\ge 1$, we then obtain from \eqref{moser2}
\begin{equation}
\label{moser3}
\begin{split}
\int \left|\nabla\left(\Big(\mathcal{G}_\varepsilon(\nabla u^\varepsilon)^{\frac{\alpha+2}{2}-\frac{1}{p_1}}\Big)\,\eta\right)\right|^2\,dx&\le C\,(\alpha+1)^2\int \mathcal{G}_\varepsilon(\nabla u^\varepsilon)^{\alpha+2-\frac{2}{p_N}}\,\Big(|\nabla\eta|^2+\eta\,|D^2 \eta|\Big)\,dx\\
&+C\,(\alpha+1)^2\,\int |f^\varepsilon|^2\, \mathcal{G}_\varepsilon(\nabla u^\varepsilon)^\alpha\,\eta^2\,dx,
\end{split}
\end{equation}
possibly for a different constant $C=C(N,p_1,p_N)>0$. Let us suppose for simplicity that $N\ge 3$. The case $N=2$ can be treated with minor modifications. On the left-hand side of \eqref{moser3}, we then use Sobolev's inequality in $W^{1,2}(\mathbb{R}^N)$. This gives
\begin{equation}
\label{moser4}
\begin{split}
\left(\int \left(\mathcal{G}_\varepsilon(\nabla u^\varepsilon)^{\frac{\alpha+2}{2}-\frac{1}{p_1}}\,\eta\right)^{2^*}\,dx\right)^\frac{2}{2^*}&\le C\,(\alpha+1)^2\int \mathcal{G}_\varepsilon(\nabla u^\varepsilon)^{\alpha+2-\frac{2}{p_N}}\,\Big(|\nabla\eta|^2+\eta\,|D^2 \eta|\Big)\,dx\\
&+C\,(\alpha+1)^2\,\int |f^\varepsilon|^2\, \mathcal{G}_\varepsilon(\nabla u^\varepsilon)^\alpha\,\eta^2\,dx,
\end{split}
\end{equation}
with some new constant \(C=C(N,p_1,p_N)>0\).
We choose $\eta\in C^2_0(B_R)$ to be a cut-off function such that
\[
0\le \eta\le 1,\qquad \eta \equiv 1 \mbox{ on } B_r, \qquad |\nabla \eta|^2+|D^2 \eta|\le \frac{C}{(R-r)^2}.
\]
Thus we obtain from \eqref{moser4}
\[
\begin{split}
\left(\int_{B_r} \Big(\mathcal{G}_\varepsilon(\nabla u^\varepsilon)^{\frac{\alpha+2}{2}-\frac{1}{p_1}}\Big)^{2^*}\,dx\right)^\frac{2}{2^*}&\le C\,\frac{(\alpha+1)^2}{(R-r)^2}\int_{B_R} \mathcal{G}_\varepsilon(\nabla u^\varepsilon)^{\alpha+2-\frac{2}{p_N}}\,dx\\
&+C\,(\alpha+1)^2\,\int_{B_R} |f^\varepsilon|^2\, \mathcal{G}_\varepsilon(\nabla u^\varepsilon)^\alpha\,dx.
\end{split}
\]
We now take an exponent $q>N$. By H\"older's inequality on the last term, we deduce that
\begin{equation}
\label{moser5b}
\begin{split}
\left(\int_{B_r} \Big(\mathcal{G}_\varepsilon(\nabla u^\varepsilon)^{\frac{\alpha+2}{2}-\frac{1}{p_1}}\Big)^{2^*}\,dx\right)^\frac{2}{2^*}&\le C\,\frac{(\alpha+1)^2}{(R-r)^2}\,\int_{B_R} \mathcal{G}_\varepsilon(\nabla u^\varepsilon)^{\alpha+2-\frac{2}{p_N}}\,dx\\
&+C\,(\alpha+1)^2\,\|f^\varepsilon\|_{L^q(B_R)}^2\, \left(\int_{B_R}\mathcal{G}_\varepsilon(\nabla u^\varepsilon)^{\alpha\,\frac{q}{q-2}}\,dx\right)^\frac{q-2}{q}.
\end{split}
\end{equation}
Before proceeding further, we rely on H\"older's inequality to get
\[
\int_{B_R} \mathcal{G}_\varepsilon(\nabla u^\varepsilon)^{\alpha+2-\frac{2}{p_N}}\,dx\le \left(\int_{B_R} \mathcal{G}_\varepsilon(\nabla u^\varepsilon)^{\alpha\,\frac{q}{q-2}}\,dx\right)^\frac{q-2}{q}\,\left(\int_{B_R} \mathcal{G}_\varepsilon(\nabla u^\varepsilon)^{\frac{q}{p_N'}}\,dx\right)^\frac{2}{q}.
\]
Moreover, by recalling that $\mathcal{G}_{\varepsilon}\ge 1$, we have 
\[
\mathcal{G}_\varepsilon(\nabla u^\varepsilon)^{\alpha\,\frac{q}{q-2}}\le \Big(\mathcal{G}_\varepsilon(\nabla u^\varepsilon)^{\alpha+2-\frac{2}{p_1}}\Big)^\frac{q}{q-2}.
\]
By using these two facts in \eqref{moser5b}, we obtain
\begin{equation}
\label{moser6}
\begin{split}
\left(\int_{B_r} \mathcal{G}_\varepsilon(\nabla u^\varepsilon)^{\left(\alpha+2-\frac{2}{p_1}\right)\,\frac{2^*}{2}}\,dx\right)^\frac{2}{2^*}
&\le C\,(\alpha+1)^2\\
&\times\,\left[\frac{1}{(R-r)^2}\,\left(\int_{B_R} \mathcal{G}_\varepsilon(\nabla u^\varepsilon)^{\frac{q}{p_N'}}\,dx\right)^\frac{2}{q}+\|f^\varepsilon\|_{L^q(B_R)}^2\right]\\
&\times\left(\int_{B_R}\mathcal{G}_\varepsilon(\nabla u^\varepsilon)^{\left(\alpha+2-\frac{2}{p_1}\right)\,\frac{q}{q-2}}\,dx\right)^\frac{q-2}{q}.
\end{split}
\end{equation}
We now set 
\[
\theta=\alpha+2-\frac{2}{p_1},
\]
so that from \eqref{moser6}, we get
\begin{equation}
\label{moser7}
\begin{split}
\left(\int_{B_r}\mathcal{G}_\varepsilon(\nabla u^\varepsilon)^{\frac{2^*}{2}\theta}\,dx\right)^\frac{2}{2^*\,\theta}&\le \left(C\theta^2\right)^{\frac{1}{\theta}}\,\left[\frac{1}{(R-r)^2}\,\left(\int_{B_R} \mathcal{G}_\varepsilon(\nabla u^\varepsilon)^{\frac{q}{p_N'}}\,dx\right)^\frac{2}{q}+\|f^\varepsilon\|_{L^q(B_R)}^2\right]^\frac{1}{\theta}\\
&\times\left(\int_{B_R}\mathcal{G}_\varepsilon(\nabla u^\varepsilon)^{\theta\frac{q}{q-2}}\,dx\right)^\frac{q-2}{q\,\theta}.
\end{split}
\end{equation}
We define the sequence of exponents through the following recursive relation
\[
\theta_0=1,\qquad \theta_{i+1}\,\frac{q}{q-2}=\frac{2^*}{2}\,\theta_i,\qquad \mbox{ for }i\in\mathbb{N};
\]
that is\footnote{Observe that 
\[
\frac{2\,q}{2^*\,(q-2)}<1 \qquad \Longleftrightarrow \qquad q>N,
\]
and the latter holds true, in view of our assumption.},
\[
\theta_{i+1}=\left(\frac{2^*}{2}\,\frac{q-2}{q}\right)\,\theta_i=\left(\frac{2^*}{2}\,\frac{q-2}{q}\right)^{i+1},\qquad \mbox{ for } i\in\mathbb{N}.
\]
We also define the classical sequence of shrinking radii
\[
r_i=r+\frac{R-r}{2^i},\qquad \mbox{ for } i\in\mathbb{N}.
\]
With this notation, from \eqref{moser7} we get 
\begin{equation}
\label{moser8}
\begin{split}
\left\|\mathcal{G}_\varepsilon(\nabla u^\varepsilon)\right\|_{L^{\frac{q}{q-2}\,\theta_{i+1}}(B_{r_{i+1}})}&\le \left[\frac{C}{(R-r)^2}\,\left(\int_{B_R} \mathcal{G}_\varepsilon(\nabla u^\varepsilon)^{\frac{q}{p_N'}}\,dx\right)^\frac{2}{q}+C\,\|f^\varepsilon\|_{L^q(B_R)}^2\right]^\frac{1}{\theta_i}\\
&\times (4^i\,\theta_i^2)^\frac{1}{\theta_i}\,
\left\|\mathcal{G}_\varepsilon(\nabla u^\varepsilon)\right\|_{L^{\frac{q}{q-2}\,\theta_{i}}(B_{r_{i}})}.
\end{split}
\end{equation}
By starting from $i=0$ and iterating \eqref{moser8} $n$ times, we get
\begin{equation}
\label{moser9}
\begin{split}
\left\|\mathcal{G}_\varepsilon(\nabla u^\varepsilon)\right\|_{L^{\frac{q}{q-2}\,\theta_{n+1}}(B_{r_{n+1}})}&\le 
\left[\frac{C}{(R-r)^2}\,\left(\int_{B_R} \mathcal{G}_\varepsilon(\nabla u^\varepsilon)^{\frac{q}{p_N'}}\,dx\right)^\frac{2}{q}+C\,\|f^\varepsilon\|_{L^q(B_R)}^2\right]^{\sum\limits_{i=0}^n\frac{1}{\theta_i}}\\
&\times\prod_{i=0}^n (4^i\,\theta_i^2)^\frac{1}{\theta_i}\,
\left\|\mathcal{G}_\varepsilon(\nabla u^\varepsilon)\right\|_{L^{\frac{q}{q-2}}(B_{R})}.
\end{split}
\end{equation}
By observing that 
\[
\lim_{n\to\infty}\prod_{i=0}^n (4^i\,\theta_i^2)^\frac{1}{\theta_i}=:C_{N,q}<+\infty,
\]
and
\[
\lim_{n\to\infty}\sum\limits_{i=0}^n\frac{1}{\theta_i}=\sum_{i=0}^\infty \left(\frac{2\,q}{2^*\,(q-2)}\right)^i=\frac{N}{2}\,\frac{q-2}{q-N},
\]
if we take the limit as $n$ goes to $\infty$ in \eqref{moser9}, we end up with
\[
\begin{split}
\|\mathcal{G}_\varepsilon(\nabla u^\varepsilon)\|_{L^\infty(B_r)}&\le C\,\left[\frac{1}{(R-r)^2}\,\left(\int_{B_R} \mathcal{G}_\varepsilon(\nabla u^\varepsilon)^{\frac{q}{p_N'}}\,dx\right)^\frac{2}{q}+\|f^\varepsilon\|_{L^q(B_R)}^2\right]^{\frac{N}{2}\,\frac{q-2}{q-N}}\\
&\times\left\|\mathcal{G}_\varepsilon(\nabla u^\varepsilon)\right\|_{L^{\frac{q}{q-2}}(B_{R})},
\end{split}
\]
for some $C=C(N,p_1,p_N,q)>0$. The previous estimate holds for every $r<R$ such that $B_R\Subset B$. Thus, we can now use a standard interpolation trick to rectify it and replace the $L^{q/(q-2)}$ norm on the right-hand side by the $L^1$ norm.
\par
This goes as follows: we first observe that
\[
\left\|\mathcal{G}_\varepsilon(\nabla u^\varepsilon)\right\|_{L^{\frac{q}{q-2}}(B_{R})}\le \left(\left\|\mathcal{G}_\varepsilon(\nabla u^\varepsilon)\right\|_{L^\infty(B_{R})}\right)^\frac{2}{q}\,\left(\left\|\mathcal{G}_\varepsilon(\nabla u^\varepsilon)\right\|_{L^1(B_{R})}\right)^\frac{q-2}{q}.
\]
Then by using Young's inequality with exponents $q/2$ and $q/(q-2)$, we get
\[
\begin{split}
\|\mathcal{G}_\varepsilon(\nabla u^\varepsilon)\|_{L^\infty(B_r)}&\le \frac{q-2}{q}\,C^\frac{q}{q-2}\,\left[\frac{1}{(R-r)^2}\,\left(\int_{B_R} \mathcal{G}_\varepsilon(\nabla u^\varepsilon)^{\frac{q}{p_N'}}\,dx\right)^\frac{2}{q}+\|f\|_{L^q(B_R)}^2\right]^{\frac{N}{2}\frac{q}{q-N}}\\
&\times\left\|\mathcal{G}_\varepsilon(\nabla u^\varepsilon)\right\|_{L^1(B_{R})}+\frac{2}{q}\,\left\|\mathcal{G}_\varepsilon(\nabla u^\varepsilon)\right\|_{L^\infty(B_{R})}.
\end{split}
\]
We now take $s,t$ such that $r\le s<t\le R$. The previous estimate is valid by replacing $r$ with $s$ and $R$ with $t$. Thus, with some simple algebraic manipulations, we get
\[
\begin{split}
\|\mathcal{G}_\varepsilon(\nabla u^\varepsilon)\|_{L^\infty(B_s)}\le \frac{q-2}{q}\,\frac{C^\frac{q}{q-2}}{2^{\frac{q(N-2)+2N}{2(N-q)}}}&\left[\frac{1}{(t-s)^\frac{N\,q}{q-N}}\,\left(\int_{B_R} \mathcal{G}_\varepsilon(\nabla u^\varepsilon)^{\frac{q}{p_N'}}\,dx\right)^\frac{N}{q-N}+\|f^\varepsilon\|_{L^q(B_R)}^\frac{N\,q}{q-N}\right]\\
&\times\left\|\mathcal{G}_\varepsilon(\nabla u^\varepsilon)\right\|_{L^1(B_{R})}+\frac{2}{q}\,\left\|\mathcal{G}_\varepsilon(\nabla u^\varepsilon)\right\|_{L^\infty(B_{R})}.
\end{split}
\]
By relying once again on Lemma \ref{lm:giusti}, from the last estimate we get
\[
\begin{split}
\|\mathcal{G}_\varepsilon(\nabla u^\varepsilon)\|_{L^\infty(B_r)}&\le \widetilde{C}\,\left[\frac{1}{(R-r)^\frac{N\,q}{q-N}}\,\left(\int_{B_R} \mathcal{G}_\varepsilon(\nabla u^\varepsilon)^{\frac{q}{p_N'}}\,dx\right)^\frac{N}{q-N}+\|f^\varepsilon\|_{L^q(B_R)}^\frac{N\,q}{q-N}\right]\,\left\|\mathcal{G}_\varepsilon(\nabla u^\varepsilon)\right\|_{L^1(B_{R})}.
\end{split}
\]
By finally using that 
\[
\mathcal{G}_\varepsilon(\nabla u^\varepsilon)^\frac{q}{p_N'}\le \mathcal{G}_\varepsilon(\nabla u^\varepsilon)^q,
\]
we eventually conclude the proof.
\end{proof}

\section{Uniform higher differentiability}\label{diff}

At last, we prove a Sobolev--type regularity result for (some nonlinear function of) $\nabla u^\varepsilon$, which eventually will permit to establish Theorem S.
\begin{prop}
Let $1<p_1\le \dots\le p_N\le 2$. For $0<\varepsilon\le \varepsilon_0$ and $i=1,\dots,N$, we set
\begin{equation}
\label{Vi}
\mathcal{V}_{i,\varepsilon}=V_{i,\varepsilon}(u^\varepsilon_{x_i}),\qquad \mbox{ with } V_{i,\varepsilon}(t)=\int_0^t \sqrt{g_{i,\varepsilon}''(\tau)}
\,d\tau.
\end{equation}
Then for every non-negative $\eta\in C^2_0(B)$ and every \(\gamma\geq 2\), we have
\begin{equation}
\label{aprioriV}
\begin{split}
\sum_{i=1}^N \int \Big|\nabla \mathcal{V}_{i,\varepsilon}\Big|^2 \eta^2\,dx&\leq  C\,\int \mathcal{G}_\varepsilon(\nabla u^\varepsilon)^{1+2\,\left(\frac{1}{p_1}-\frac{1}{p_N}\right)}\, \Big(|\nabla \eta|^2+ \eta\,|D^2\eta|\Big)\,dx\\
& +C\,\left(\int \mathcal{G}_\varepsilon(\nabla u^\varepsilon)^{\frac{2-p_1}{p_1}\frac{\gamma}{\gamma-2}}\,\eta^2\,dx\right)^\frac{\gamma-2}{\gamma}\,\left(\int |f^\varepsilon|^\gamma\,\eta^2\,dx\right)^\frac{2}{\gamma},
\end{split}
\end{equation}
for some $C=C(N,p_N,p_1)>0$.
\end{prop}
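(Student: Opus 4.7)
The plan is to multiply the differentiated Euler--Lagrange equation \eqref{Eulerdiff} by $\varphi=u^\varepsilon_{x_k}\eta^2$ (a valid choice by smoothness of $u^\varepsilon$, see Lemma \ref{lm:below}), and sum over $k=1,\dots,N$. Expanding the left-hand side by the product rule, the diagonal part
\[
\sum_{i,k}\int g''_{i,\varepsilon}(u^\varepsilon_{x_i})\,|u^\varepsilon_{x_k x_i}|^2\,\eta^2\,dx
\]
equals $\sum_{i=1}^N\int|\nabla\mathcal{V}_{i,\varepsilon}|^2\,\eta^2\,dx$ by the very definition \eqref{Vi} of $\mathcal{V}_{i,\varepsilon}$, which is the quantity we want to bound. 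The off-diagonal cross term $2\sum_{i,k}\int g''_{i,\varepsilon}(u^\varepsilon_{x_i})\,u^\varepsilon_{x_k x_i}\,u^\varepsilon_{x_k}\,\eta\,\eta_{x_i}\,dx$ and the right-hand side $\sum_k\int (f^\varepsilon(\zeta^\varepsilon)'(u^\varepsilon))_{x_k}\,u^\varepsilon_{x_k}\,\eta^2\,dx$ both contain an ``undesirable'' $x_k$-derivative which will be shuffled away via Naumann's trick, that is, the identity $g''_{i,\varepsilon}(u^\varepsilon_{x_i})\,u^\varepsilon_{x_k x_i}=\partial_{x_k}[g'_{i,\varepsilon}(u^\varepsilon_{x_i})]$ followed by an integration by parts in the $x_k$ direction (analogously, we integrate by parts the forcing term).

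After these integrations by parts, every remaining piece involving second derivatives of $u^\varepsilon$ takes the form $g'_{i,\varepsilon}(u^\varepsilon_{x_i})\,u^\varepsilon_{x_k x_k}\,\eta\,\eta_{x_i}$ or $f^\varepsilon(\zeta^\varepsilon)'(u^\varepsilon)\,u^\varepsilon_{x_k x_k}\,\eta^2$. Here we apply Young's inequality splitting the product so that $u^\varepsilon_{x_k x_k}$ is weighted by $\sqrt{g''_{k,\varepsilon}(u^\varepsilon_{x_k})}$; the resulting contribution $g''_{k,\varepsilon}(u^\varepsilon_{x_k})\,|u^\varepsilon_{x_k x_k}|^2\,\eta^2\le|\nabla\mathcal{V}_{k,\varepsilon}|^2\eta^2$ can be absorbed on the left-hand side, provided we choose the Young parameter small enough to beat the combinatorial factor $N$ coming from the sum over $i$. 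The remaining pieces are pure first-order in $u^\varepsilon$ and will be estimated by powers of $\mathcal{G}_\varepsilon(\nabla u^\varepsilon)$.

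The key mixed ratio is $|g'_{i,\varepsilon}(u^\varepsilon_{x_i})|^2/g''_{k,\varepsilon}(u^\varepsilon_{x_k})$, which by \eqref{g2} and the elementary bounds $|g'_{j,\varepsilon}(t)|\le(\varepsilon+t^2)^{(p_j-1)/2}$ and $(\varepsilon+t^2)^{p_j/2}\le p_N\,\mathcal{G}_\varepsilon(\nabla u^\varepsilon)$ is majorised by
\[
C\,\mathcal{G}_\varepsilon(\nabla u^\varepsilon)^{2(p_i-1)/p_i+(2-p_k)/p_k}.
\]
This exponent is maximised by choosing $i=N$ and $k=1$, and yields exactly $1+2(1/p_1-1/p_N)$, which is precisely the exponent in the first term on the right-hand side of \eqref{aprioriV}. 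The remaining piece $g'_{i,\varepsilon}(u^\varepsilon_{x_i})\,u^\varepsilon_{x_k}\,(\eta\eta_{x_i})_{x_k}$ is handled identically, producing the factor $|\nabla\eta|^2+\eta|D^2\eta|$. For the forcing term, the same splitting gives $|f^\varepsilon|^2/g''_{k,\varepsilon}(u^\varepsilon_{x_k})\le C\,|f^\varepsilon|^2\,\mathcal{G}_\varepsilon(\nabla u^\varepsilon)^{(2-p_1)/p_1}$, and a direct application of Hölder's inequality with conjugate exponents $\gamma/2$ and $\gamma/(\gamma-2)$ produces the product of integrals in \eqref{aprioriV}; the residual $f^\varepsilon u^\varepsilon_{x_k}\eta\eta_{x_k}$ is split again by Young's into one copy of the same $|f^\varepsilon|^2\,\mathcal{G}_\varepsilon^{(2-p_1)/p_1}\,\eta^2$ and a term $\mathcal{G}_\varepsilon\,|\nabla\eta|^2\le\mathcal{G}_\varepsilon^{1+2(1/p_1-1/p_N)}|\nabla\eta|^2$ (using $\mathcal{G}_\varepsilon\ge 1$ and $p_1\le p_N$).

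The main obstacle is exponent bookkeeping. The subquadratic hypothesis $p_N\le 2$ is essential precisely here: it ensures $(2-p_k)/2\ge 0$ for every $k$, which is what allows the $\varepsilon$-uniform upper bound $1/g''_{k,\varepsilon}(t)\le(p_k-1)^{-1}(\varepsilon+t^2)^{(2-p_k)/2}\le C\,\mathcal{G}_\varepsilon(\nabla u^\varepsilon)^{(2-p_k)/p_k}$; without it, the ratios above could not be controlled by a power of $\mathcal{G}_\varepsilon$ alone, uniformly in $\varepsilon$. One must also verify that \emph{every} Young step produces a power of $\mathcal{G}_\varepsilon$ no larger than $1+2(1/p_1-1/p_N)$, which determines the extremal choice of indices $i=N$, $k=1$ in the worst-case estimate.
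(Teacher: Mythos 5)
Your proposal is correct and follows essentially the same route as the paper: testing \eqref{Eulerdiff} with $u^\varepsilon_{x_k}\eta^2$, Naumann's integration-by-parts to trade $g''_{i,\varepsilon}u^\varepsilon_{x_kx_i}$ for $g'_{i,\varepsilon}$, Young's inequality to absorb the remaining $u^\varepsilon_{x_kx_k}$ into the left-hand side, and then bounding the mixed ratios by powers of $\mathcal{G}_\varepsilon$ via \eqref{g2} and \eqref{majorette}, with a final H\"older step for the forcing term. The only cosmetic difference is that the paper carries out the absorption for each fixed $k$ before summing, while you sum over $k$ first; the exponent bookkeeping (worst case $i=N$, $k=1$ yielding $1+2(1/p_1-1/p_N)$) matches the paper's computation exactly.
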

When \(\gamma=2\), the last term is simply \(C\,\|f^\varepsilon \eta\|_{L^{2}}^2\).
\begin{proof}
We start by fixing $k\in\{1,\dots,N\}$ and inserting in the differentiated equation \eqref{Eulerdiff} the test function
$\varphi=u^\varepsilon_{x_k}\,\eta^2$.
Thus we get 
\[
\begin{split}
\sum_{i=1}^N \int g_{i,\varepsilon}''(u^\varepsilon_{x_i})\,|u^\varepsilon_{x_k\,x_i}|^2\, \eta^2\,dx
&= -2\,\sum_{i=1}^{N}\int g_{i,\varepsilon}''(u^\varepsilon_{x_i})\,u^\varepsilon_{x_k\,x_i}\,u^\varepsilon_{x_k}\,\eta\, \eta_{x_i}\,dx\\ 
&+\int (f^\varepsilon (\zeta^\varepsilon)'(u^\varepsilon))_{x_k}\,u^{\varepsilon}_{x_k}\,\eta^2\,dx. 
\end{split}
\]
For the first term of the right-hand side, we use the same trick as in the proof of Proposition \ref{prop:caccioespilon2}:
we observe that 
\[
g_{i,\varepsilon}''(u^\varepsilon_{x_i})\,u^\varepsilon_{x_k\,x_i}=\big(g_{i,\varepsilon}'(u^\varepsilon_{x_i})\big)_{x_k},
\]
and then integrate by parts. We integrate by parts the term $(f^\varepsilon (\zeta^\varepsilon)'(u^\varepsilon))_{x_k}$, as well. This yields
\begin{equation}
\label{2hd}
\begin{split}
\sum_{i=1}^N \int g_{i,\varepsilon}''(u^\varepsilon_{x_i})\,|u^\varepsilon_{x_k\,x_i}|^2\, \eta^2\,dx
&= 2\,\sum_{i=1}^{N}\int g_{i,\varepsilon}'(u^\varepsilon_{x_i})\,(u^\varepsilon_{x_k}\,\eta\, \eta_{x_i})_{x_k}-\int f^\varepsilon (\zeta^\varepsilon)'(u^\varepsilon)\,(u^{\varepsilon}_{x_k}\,\eta^2)_{x_k}\,dx\\
&=\int u_{x_k\,x_k}^\varepsilon \left(2\,\sum_{i=1}^{N} g_{i,\varepsilon}'(u^\varepsilon_{x_i})\,\eta\, \eta_{x_i} -f^\varepsilon (\zeta^\varepsilon)'(u^\varepsilon)\,\eta^2\right)\,dx\\
&+2\,\int u^\varepsilon_{x_k}\,\left(\sum_{i=1}^{N} g_{i,\varepsilon}'(u^\varepsilon_{x_i})\,(\eta_{x_k}\, \eta_{x_i}+\eta\,\eta_{x_i\,x_k})-\eta\,\eta_{x_k}\,f^\varepsilon (\zeta^\varepsilon)'(u^\varepsilon) \right)\,dx.
\end{split}
\end{equation}
By Young's inequality and the fact that $0\le (\zeta^\varepsilon)'\le 1$, we can estimate the first term of the right-hand side as follows:
\[
\begin{split}
\int u_{x_k\,x_k}^\varepsilon &\left(2\,\sum_{i=1}^{N} g_{i,\varepsilon}'(u^\varepsilon_{x_i})\,\eta\, \eta_{x_i} -f^\varepsilon (\zeta^\varepsilon)'(u^\varepsilon)\,\eta^2\right)\,dx
\leq \frac{1}{2}\,\int g_{k,\varepsilon}''(u^\varepsilon_{x_k})\,|u^\varepsilon_{x_k\,x_k}|^2\, \eta^2\,dx\\
&+ 4\,\int \frac{1}{g_{k,\varepsilon}''(u^\varepsilon_{x_k})}\,\left(\left(\sum_{i=1}^{N} |g_{i,\varepsilon}'(u^\varepsilon_{x_i})|\right)^2 |\nabla \eta|^2 +|f^\varepsilon|^2\,\eta^2\right)\,dx.
\end{split}
\]
We use \eqref{g2} to estimate $1/g_{k,\varepsilon}''$ on the right-hand side.
On account of this inequality, we get
\[
\begin{split}
\int u_{x_k\,x_k}^\varepsilon\,& \left(2\,\sum_{i=1}^{N} g_{i,\varepsilon}'(u^\varepsilon_{x_i})\,\eta\, \eta_{x_i} -f^\varepsilon (\zeta^\varepsilon)'(u^\varepsilon)\,\eta^2\right)\,dx
\leq \frac{1}{2}\,\int g_{k,\varepsilon}''(u^\varepsilon_{x_k})\,|u^\varepsilon_{x_k\,x_k}|^2\, \eta^2\,dx\\
&+ \frac{4}{p_k-1}\,\int \left(\varepsilon+(u^\varepsilon_{x_k})^2\right)^{\frac{2-p_k}{2}}\,\left(\left(\sum_{i=1}^{N} |g_{i,\varepsilon}'(u^\varepsilon_{x_i})|\right)^2 |\nabla \eta|^2 +|f^\varepsilon|^2\,\eta^2\right)\,dx.
\end{split}
\]
Inserting the above inequality into \eqref{2hd} and absorbing the Hessian term of the right-hand side into the left-hand side, one gets
\begin{equation}
\label{3hd}
\begin{split}
\sum_{i=1}^N \int g_{i,\varepsilon}''(u^\varepsilon_{x_i})&\,|u^\varepsilon_{x_k\,x_i}|^2\, \eta^2\,dx\\
&\leq  \frac{8}{p_k-1}\,\int \left(\varepsilon+(u^\varepsilon_{x_k})^2\right)^{\frac{2-p_k}{2}}\,\left(\left(\sum_{i=1}^{N} |g_{i,\varepsilon}'(u^\varepsilon_{x_i})|\right)^2 |\nabla \eta|^2 +|f^\varepsilon|^2\,\eta^2\right)\,dx\\
&+4\,\int u^\varepsilon_{x_k}\,\left(\sum_{i=1}^{N} g_{i,\varepsilon}'(u^\varepsilon_{x_i})\,(\eta_{x_k}\, \eta_{x_i}+\eta\,\eta_{x_i\,x_k})-\eta\,\eta_{x_k}\,f^\varepsilon (\zeta^\varepsilon)'(u^\varepsilon) \right)\,dx.
\end{split}
\end{equation}
We now estimate the last term as follows:
\[
\begin{split}
\left|\int u^\varepsilon_{x_k}\right.&\left.\left(\sum_{i=1}^{N} g_{i,\varepsilon}'(u^\varepsilon_{x_i})\,(\eta_{x_k}\, \eta_{x_i}+\eta\,\eta_{x_i\,x_k})-\eta\,\eta_{x_k}\,f^\varepsilon (\zeta^\varepsilon)'(u^\varepsilon) \right)\,dx\right|\\
&\leq \int |u^\varepsilon_{x_k}|\,|f^\varepsilon|\,\eta\,|\nabla \eta|\,dx +\int |u^\varepsilon_{x_k}|\,\sum_{i=1}^{N} |g_{i,\varepsilon}'(u^\varepsilon_{x_i})|\,\Big(|\nabla \eta|^2 + \eta\,|D^2\eta|\Big)\,dx.
\end{split}
\]
Observe that we used again that $0\le (\zeta^\varepsilon)'\le 1$.
We apply Young's inequality on the last term, so to obtain
\[
\begin{split}
\int |u^\varepsilon_{x_k}|\,\sum_{i=1}^{N} |g_{i,\varepsilon}'(u^\varepsilon_{x_i})|\Big(|\nabla \eta|^2 + \eta\,|D^2\eta|\Big)\,dx
&\leq \frac{1}{2}\int |u^\varepsilon_{x_k}|^{2-p_k}\!\left(\sum_{i=1}^{N} |g_{i,\varepsilon}'(u^\varepsilon_{x_i})\big|\right)^2\!\!\Big(|\nabla \eta|^2 + \eta\,|D^2\eta|) \Big)\,dx\\
& + \frac{1}{2}\int |u^\varepsilon_{x_k}|^{p_k}\,\Big(|\nabla \eta|^2 + \eta\,|D^2\eta|\Big)\,dx.
\end{split}
\]
By further using that $p_k\le 2$, we have
\[
|u^\varepsilon_{x_k}|^{2-p_k} \leq (\varepsilon+(u^\varepsilon_{x_k})^2)^{\frac{2-p_k}{2}}= p_k^\frac{2-p_k}{p_k}\,g_{k,\varepsilon}(u_{x_k}^\varepsilon)^{\frac{2-p_k}{p_k}}.
\]
It follows from the above inequality and \eqref{3hd} that
\[
\begin{split}
\sum_{i=1}^N \int g_{i,\varepsilon}''(u^\varepsilon_{x_i})\,|u^\varepsilon_{x_k\,x_i}|^2\, \eta^2\,dx&\leq  C\,\int g_{k,\varepsilon}(u_{x_k}^\varepsilon)^{\frac{2-p_k}{p_k}}\,\left(\left(\sum_{i=1}^{N} |g_{i,\varepsilon}'(u^\varepsilon_{x_i})|\right)^2 |\nabla \eta|^2 +|f^\varepsilon|^2\,\eta^2\right)\,dx\\\
&+C\,\int |u^\varepsilon_{x_k}|^{p_k}\,\Big(|\nabla \eta|^2 + \eta\,|D^2\eta|\Big)\,dx+C\,\int |u^\varepsilon_{x_k}|\,|f^\varepsilon|\,\eta\,|\nabla \eta|\,dx,
\end{split}
\]
for some $C=C(p_1,p_N)>0$.  Then take the sum over $k=1,\dots,N$. This gives
\begin{equation}
\label{5hd}
\begin{split}
\sum_{i=1}^N &\int g_{i,\varepsilon}''(u^\varepsilon_{x_i})\,|\nabla u^\varepsilon_{x_i}|^2\, \eta^2\,dx\\
&\leq  C\,\int \sum_{k=1}^Ng_{k,\varepsilon}(u_{x_k}^\varepsilon)^{\frac{2-p_k}{p_k}}\,\left(\left(\sum_{i=1}^{N} |g_{i,\varepsilon}'(u^\varepsilon_{x_i})|\right)^2 |\nabla \eta|^2 +|f^\varepsilon|^2\,\eta^2\right)\,dx\\\
&+C\,\int \sum_{k=1}^N|u^\varepsilon_{x_k}|^{p_k}\,\Big(|\nabla \eta|^2 + \eta\,|D^2\eta|\Big)\,dx+C\,\int \sum_{k=1}^N |u^\varepsilon_{x_k}|\,|f^\varepsilon|\,\eta\,|\nabla \eta|\,dx.
\end{split}
\end{equation}
By Lemma \ref{lm:majorette}, we have
\[
\sum_{i=1}^N|g_{i, \varepsilon}'(u_{x_i}^{\varepsilon})|\le p_N^\frac{p_N-1}{p_N}\,\sum_{i=1}^NG_\varepsilon(\nabla u^\varepsilon)^{\frac{p_i-1}{p_i}}\le N\,p_N^\frac{p_N-1}{p_N}\,\mathcal{G}_\varepsilon(\nabla u^\varepsilon)^{\frac{p_N-1}{p_N}}.
\] 
Moreover, by the definitions of $g_{k,\varepsilon},\, G_\varepsilon$, and $\mathcal{G}_\varepsilon$, it is easily seen that
\[
\sum_{k=1}^Ng_{k,\varepsilon}(u_{x_k}^\varepsilon)^{\frac{2-p_k}{p_k}}\le C\,\mathcal{G}_\varepsilon(\nabla u^\varepsilon)^\frac{2-p_1}{p_1},
\]
\[
\sum_{k=1}^N|u^\varepsilon_{x_k}|^{p_k}\le \sum_{k=1}^Np_k\, g_{k, \varepsilon}(u_{x_k}^\varepsilon)\le C\,\mathcal{G}_\varepsilon(\nabla u^\varepsilon)\qquad \mbox{ and }\qquad \sum_{k=1}^N |u^\varepsilon_{x_k}|\le C\,\mathcal{G}_\varepsilon(\nabla u^\varepsilon)^\frac{1}{p_1},
\]
where all the constants depend only on $N, p_1$ and $p_N$. 
From \eqref{5hd}, we get
\[
\begin{split}
\sum_{i=1}^N& \int g_{i,\varepsilon}''(u^\varepsilon_{x_i})\,|\nabla u^\varepsilon_{x_i}|^2\, \eta^2\,dx\\
&\leq  C\,\int \mathcal{G}_\varepsilon(\nabla u^\varepsilon)^{1+2\,\left(\frac{1}{p_1}-\frac{1}{p_N}\right)}\, |\nabla \eta|^2\,dx +C\,\int \mathcal{G}_\varepsilon(\nabla u^\varepsilon)^\frac{2-p_1}{p_1}\,|f^\varepsilon|^2\,\eta^2\,dx\\\
&+C\,\int \mathcal{G}_\varepsilon(\nabla u^\varepsilon)
\,\Big(|\nabla \eta|^2 + \eta\,|D^2\eta|\Big)\,dx+C\,\int \mathcal{G}_\varepsilon(\nabla u^\varepsilon)^\frac{1}{p_1}\,|f^\varepsilon|\,\eta\,|\nabla \eta|\,dx,
\end{split}
\]
for some $C=C(N,p_1,p_N)>0$. Since 
\[
1\leq 1+2\,\left(\frac{1}{p_1}-\frac{1}{p_N}\right) \qquad \mbox{ and }\qquad \mathcal{G}_\varepsilon(\nabla u^\varepsilon)\geq 1,
\] 
the third term can be absorbed in the first one, up to increasing \(C\) if necessary:
\begin{equation}
\label{7hd}
\begin{split}
\sum_{i=1}^N& \int g_{i,\varepsilon}''(u^\varepsilon_{x_i})\,|\nabla u^\varepsilon_{x_i}|^2\, \eta^2\,dx\\
&\leq  C\,\int \mathcal{G}_\varepsilon(\nabla u^\varepsilon)^{1+2\,\left(\frac{1}{p_1}-\frac{1}{p_N}\right)}\,\Big(|\nabla \eta|^2 + \eta\,|D^2\eta|\Big)\,dx +C\,\int \mathcal{G}_\varepsilon(\nabla u^\varepsilon)^\frac{2-p_1}{p_1}\,|f^\varepsilon|^2\,\eta^2\,dx\\\
&+C\,\int \mathcal{G}_\varepsilon(\nabla u^\varepsilon)^\frac{1}{p_1}\,|f^\varepsilon|\,\eta\,|\nabla \eta|\,dx.
\end{split}
\end{equation}
In the last term, we write 
\[
\mathcal{G}_\varepsilon(\nabla u^\varepsilon)^\frac{1}{p_1}\,|f^\varepsilon|\,\eta\,|\nabla \eta|
=\left( \mathcal{G}_\varepsilon(\nabla u^\varepsilon)^\frac{1}{2}|\nabla\eta|\right) \left(\mathcal{G}_\varepsilon(\nabla u^\varepsilon)^{\frac{1}{p_1}-\frac{1}{2}}\,|f^\varepsilon|\,\eta\right),
\]
and use Young's inequality:
\begin{align*}
\int \mathcal{G}_\varepsilon(\nabla u^\varepsilon)^\frac{1}{p_1}&\,|f^\varepsilon|\,\eta\,|\nabla \eta|\,dx \le \int \mathcal{G}_\varepsilon(\nabla u^\varepsilon)\,|\nabla \eta|^2\,dx + \int \mathcal{G}_\varepsilon(\nabla u^\varepsilon)^{\frac{2}{p_1}-1}\,|f^\varepsilon|^2\,\eta^2\,dx \\
&\le \int \mathcal{G}_\varepsilon(\nabla u^\varepsilon)^{1+2\,\left(\frac{1}{p_1}-\frac{1}{p_N}\right)}\,\Big(|\nabla \eta|^2+\eta\,|D^2\eta|\Big)\,dx + \int \mathcal{G}_\varepsilon(\nabla u^\varepsilon)^{\frac{2}{p_1}-1}\,|f^\varepsilon|^2\,\eta^2\,dx,
\end{align*}
where in the last line, we have used again that \(\mathcal{G}_\varepsilon\ge 1\). Inserting this estimate in \eqref{7hd}, we obtain
\[
\begin{split}
\sum_{i=1}^N &\int g_{i,\varepsilon}''(u^\varepsilon_{x_i})\,|\nabla u^\varepsilon_{x_i}|^2\, \eta^2\,dx\\
&\leq  C\,\int \mathcal{G}_\varepsilon(\nabla u^\varepsilon)^{1+2\,\left(\frac{1}{p_1}-\frac{1}{p_N}\right)}\,\Big(|\nabla \eta|^2 + \eta\,|D^2\eta|\Big)\,dx +C\,\int \mathcal{G}_\varepsilon(\nabla u^\varepsilon)^\frac{2-p_1}{p_1}\,|f^\varepsilon|^2\,\eta^2\,dx.
\end{split}
\]
On the left-hand side, we use the definition \eqref{Vi} of $\mathcal{V}_{i,\varepsilon}$ which gives that 
\[
|\nabla \mathcal{V}_{i,\varepsilon}|^2 =g''_{i,\varepsilon}(u^\varepsilon_{x_i})\,|\nabla u_{x_i}^\varepsilon|^2.
\]
This yields the the desired conclusion when the exponent \(\gamma\) in the statement of Proposition is equal to \(2\). When \(\gamma>2\), we only need to apply the H\"older inequality to the last term of the right-hand side with the exponents   
\(\gamma/(\gamma-2)\) and \(\gamma/2\). The proof is complete. 
\end{proof}

\section{Proofs of the main results}\label{proofmain}

We finally establish the three results presented in the Introduction by relying on the relevant a priori estimates that we have obtained in the previous sections.
We thus fix a ball $B_{4R}(x_0)\Subset\Omega$ as in the statements of Proposition \ref{prop:high}, Theorem L and Theorem S: we are going to use the results of the previous sections, with the choice $B=B_{2\,R}(x_0)$.
\par
We will use the functions $\mathcal{G}_0$ and $\mathcal{G}_\varepsilon$, defined by \eqref{G0} and \eqref{Ggrande}. Moreover, we will omit to indicate the centers of the balls, which will always be $x_0$. 
\subsection{Proof of Proposition \ref{prop:high}}

In this section, we assume that \(1<p_1\leq \dots \leq p_N<\infty\) and \(f\in L^{\gamma}_{\rm loc}(\Omega)\) for some \(\gamma\ge 2\). 
We consider the ball $B_R\Subset B$. Then for every \(0<\varepsilon<\varepsilon_0\),
\[
\|f^\varepsilon\|_{L^\gamma(B_R)}\leq \|f\|_{L^{\gamma}(2\,B)}.
\]
Using also that \(\|u^\varepsilon\|_{L^\infty(B)}\leq  M+1\),  Proposition \ref{prop:plc} and  Remark \ref{oss-gamma12} imply that  
for every \(0<\varepsilon <\varepsilon_0\) we have
\begin{equation}\label{eq1734}
\int_{B_\frac{R}{2}} \mathcal{G}_\varepsilon(\nabla u^\varepsilon)^\gamma\,dx\le \Gamma_1+\Gamma_2\,\int_{B_R} \mathcal{G}_\varepsilon(\nabla u^\varepsilon)\,dx,
\end{equation}
for two constants $\Gamma_1,\Gamma_2>0$ which do not depend on \(\varepsilon\), but only on 
\[
N,\, p_N,\, p_1,\,\gamma,\,R,\,\|f\|_{L^\gamma(2\,B)} \mbox{ and } M=\|U\|_{L^\infty(2\,B)}.
\]
In particular, by using that\footnote{The upper bound simply follows from \eqref{piccolina}, with standard algebraic manipulations.} 
\begin{equation}
\label{pointwiseG}
\mathcal{G}_0(z)\le \mathcal{G}_\varepsilon(z)\le C\,\left(\varepsilon^\frac{p_1}{2}+\mathcal{G}_0(z)\right),\qquad \mbox{ for every } z\in\mathbb{R}^N,
\end{equation}
for some $C=C(N,p_N,p_1)>0$, we can infer
\[
\int_{B_\frac{R}{2}} \mathcal{G}_0(\nabla u^\varepsilon)^\gamma\,dx\le \Gamma_1+C\,\Gamma_2\,\int_{B_R} \left(\varepsilon^\frac{p_1}{2}+\mathcal{G}_0(\nabla u^\varepsilon)\right)\,dx
\]
In view of Lemma \ref{lm:convergence},  there exists an infinitesimal sequence \(\{\varepsilon_k\}_{k\in\mathbb{N}}\) such that 
\[
(u^{\varepsilon_k}, \nabla u^{\varepsilon_k})\ \textrm{ converges to }\ (U,\nabla U)\qquad \textrm{ a.\,e. in } B.
\]
We then take the limit on both sides of the estimate above and use Fatou's lemma on the left. We get
\begin{equation}\label{eq1754}
\int_{B_\frac{R}{2}}\mathcal{G}_0(\nabla U)^\gamma\,dx\le \Gamma_1+C\,\Gamma_2\,\liminf_{k\to \infty}\int_{B_R} \mathcal{G}_0(\nabla u^{\varepsilon_k})\,dx.
\end{equation}
By Lemma \ref{lm:convergence}, the functions \(u_{x_i}^{\varepsilon}\) converge to \(U_{x_i}\) in \(L^{p_i}(B)\). Hence, the continuity of the map \(v\in L^{p_i}(B) \mapsto |v|^{p_i}\in L^{1}(B)\) implies that
\begin{equation}\label{eq-Vitali}
\lim_{\varepsilon\to 0}\left\|\mathcal{G}_0(\nabla u^{\varepsilon})-\mathcal{G}_0(\nabla U)\right\|_{L^{1}(B)}=0.
\end{equation}
By using this result in \eqref{eq1754}, we obtain
\[
\int_{B_\frac{R}{2}} \mathcal{G}_0(\nabla U)^{\gamma}\,dx\le \Gamma_1+C\,\Gamma_2\,\int_{B_R} \mathcal{G}_0(\nabla U)\,dx. 
\]
This concludes the proof, up to rename the constant $\Gamma_2$.

\subsection{Proof of Theorem L}

In this section, we assume that \(1<p_1\leq \dots \leq p_N\leq 2\) and \(f\in L^{\gamma}_{\rm loc}(\Omega)\) for some \(\gamma>N\).
In particular, \(\gamma\ge 2\) and thus we can rely on Proposition \ref{prop:plc}.
\par
We introduce the ball $B_{R}\Subset B$ as before.
By Proposition \ref{prop:lipschitz} applied with $B_{R/4}$ and $B_{R/2}$, for every \(\varepsilon\in (0, \varepsilon_0)\), we have
\[
\begin{split}
\|\mathcal{G}_\varepsilon(\nabla u^\varepsilon)\|_{L^\infty(B_\frac{R}{4})}&\le C\,\left[\left(\frac{4}{R}\right)^\frac{N\,\gamma}{\gamma-N}\,\left(\int_{B_\frac{R}{2}} \mathcal{G}_\varepsilon(\nabla u^\varepsilon)^\gamma\,dx\right)^\frac{N}{\gamma-N}+\|f^\varepsilon\|_{L^\gamma(B_{R})}^\frac{N\,\gamma}{\gamma-N}\right]\,\left\|\mathcal{G}_\varepsilon(\nabla u^\varepsilon)\right\|_{L^1(B_{R})},
\end{split}
\]
for some $C=C(N,p_N,p_1,\gamma)>0$. On the right-hand side, we can apply \eqref{eq1734}, in order to estimate the term containing $\mathcal{G}_\varepsilon^\gamma$. This yields
\[
\begin{split}
\|\mathcal{G}_\varepsilon(\nabla u^\varepsilon)\|_{L^\infty(B_\frac{R}{4})}&\le C\left[\left(\frac{4}{R}\right)^\frac{N\,\gamma}{\gamma-N}\!\!\left(\Gamma_1+\Gamma_2\int_{B_R} \mathcal{G}_\varepsilon(\nabla u^\varepsilon)\,dx\right)^\frac{N}{\gamma-N}\!\!+\|f^\varepsilon\|_{L^\gamma(B_{R})}^\frac{N\,\gamma}{\gamma-N}\right]\left\|\mathcal{G}_\varepsilon(\nabla u^\varepsilon)\right\|_{L^1(B_{R})}.
\end{split}
\]
We now take the same infinitesimal sequence \(\{\varepsilon_k\}_{k\in\mathbb{N}}\) as in the proof of Proposition \ref{prop:high}. 
By using again \eqref{pointwiseG}, the lower semicontinuity of the $L^\infty$ norm with respect to almost everywhere convergence, equation \eqref{eq-Vitali} 
and the fact that \(f^{\varepsilon_k}\) is defined from  \(f\) by convolution with a smooth kernel, the limit as $k$ goes to $\infty$ gives
\[
\begin{split} \left\|\mathcal{G}_0(\nabla U)\right\|_{L^{\infty}(B_\frac{R}{4})}
 &\le C\,\left[\left(\frac{4}{R}\right)^\frac{N\,\gamma}{\gamma-N}\,\left(\Gamma_1+\Gamma_2\,\int_{B_R} \mathcal{G}_0(\nabla U)\,dx\right)^\frac{N}{\gamma-N}+\|f\|_{L^\gamma(B_{R})}^\frac{N\,\gamma}{\gamma-N}\right]\,\left\|\mathcal{G}_0(\nabla U)\right\|_{L^1(B_{R})},
 \end{split}
\]
possibly for a different $C=C(N,p_N,p_1,\gamma)>0$.
This completes the proof.

\subsection{Proof of Theorem S}
We assume that \(1<p_1\leq \dots \leq p_N\leq 2\) and \(f\) verifying \eqref{gammadif}. We set for notational simplicity 
\[
\gamma=1+\frac{2}{p_1}.
\]
Consider the ball \(B_R \Subset B\) and let \(\eta\in C^{\infty}_0(B_{R/2})\) be such that 
\[
\eta\equiv 1 \mbox{ on } B_\frac{R}{4},\qquad 0\le \eta\le 1 \qquad \mbox{ and }\qquad |\nabla \eta|^2+|D^2\eta|\leq \frac{C_0}{R^2},
\]
for some \(C_0\) which depends only on \(N\). The choice of $\gamma$ entails the following estimates
\[
1+2\,\left(\frac{1}{p_1}-\frac{1}{p_N}\right)<\gamma \qquad \mbox{ and } \qquad \frac{2-p_1}{p_1}\frac{\gamma}{\gamma-2}\le \gamma.
\] 
Then, as a consequence of \eqref{aprioriV}, we have
\[
\begin{split}
\sum_{i=1}^N \int_{B_\frac{R}{4}} \Big|\nabla \mathcal{V}_{i,\varepsilon}\Big|^2 \,dx&\leq  \frac{C}{R^2}\,\int_{B_\frac{R}{2}} \mathcal{G}_\varepsilon(\nabla u^\varepsilon)^\gamma\,dx+C\left(\int_{B_\frac{R}{2}} \mathcal{G}_\varepsilon(\nabla u^\varepsilon)^\gamma\,dx\right)^\frac{\gamma-2}{\gamma}\,\|f^\varepsilon\|_{L^\gamma(B_R)}^2,
\end{split}
\]
where we also used that $\mathcal{G}_\varepsilon\ge 1$, by definition. We now rely again on \eqref{eq1734}, to estimate the terms containing $\mathcal{G}_\varepsilon^\gamma$. This estimate and Young's inequality with exponents $\gamma/2$ and $\gamma/(\gamma-2)$ give
\begin{equation}
\label{eq1920}
\sum_{i=1}^N \int_{B_\frac{R}{4}} \Big|\nabla \mathcal{V}_{i,\varepsilon}\Big|^2 \,dx\leq  \frac{C}{R^2}\,\left(\Gamma_1+\Gamma_2\,\int_{B_R} \mathcal{G}_\varepsilon(\nabla u^\varepsilon)\,dx\right)+C\,R^{\gamma-2}\,\|f^\varepsilon\|_{L^\gamma(B_R)}^\gamma,
\end{equation}
possibly for a different constant $C=C(N,p_N,p_1)>0$. From this estimate,  we deduce 
that the family \(\nabla\mathcal{V}_{i,\varepsilon}\) is uniformly bounded in \(L^{2}(B_{R/4})\). Moreover, by \eqref{g2} we have
\begin{equation}\label{serve}
\sqrt{g_{i,\varepsilon}''(t)}\le (\varepsilon+t^2)^\frac{p_i-2}{4} \le |t|^\frac{p_i-2}{2},\qquad \mbox{ for } t\not=0.
\end{equation}
Thus, by recalling the definition of $\mathcal{V}_{i,\varepsilon}$, we get
\[
\begin{split}
\int_{B_\frac{R}{4}} |\mathcal{V}_{i,\varepsilon}|^2\,dx=\int_{B_\frac{R}{4}} |V_{i,\varepsilon}(u^\varepsilon_{x_i})|^2\,dx&\le \left(\frac{2}{p_i}\right)^2\,\int_{B_\frac{R}{4}} |u^\varepsilon_{x_i}|^{p_i}\,dx,
\end{split}
\]
and the latter is uniformly bounded, thanks to Lemma \ref{lm:unif}. 
\par
Thus, by taking the same infinitesimal sequence \(\{\varepsilon_k\}_{k\geq 1}\) as in the proof of Proposition \ref{prop:high}, we have obtained that $\{\mathcal{V}_{i,\varepsilon_k}\}_{k\in\mathbb{N}}$ is a bounded sequence in $W^{1,2}(B_{R/4})$. By appealing to the Rellich-Kondra\v{s}ov Theorem, we can infer its convergence to a function $\mathcal{V}_i\in W^{1,2}(B_{R/4})$, weakly in $W^{1,2}(B_{R/4})$ and strongly in $L^2(B_{R/4})$ (up to a subsequence). By the weak lower semicontinuity of the \(L^{2}\) norm, \eqref{pointwiseG} and \eqref{eq-Vitali}, we obtain from \eqref{eq1920} that
\[
\sum_{i=1}^N \int_{B_\frac{R}{4}} \Big|\nabla \mathcal{V}_i\Big|^2 \,dx \leq  \frac{C}{R^2}\,\left(\Gamma_1+\Gamma_2\,\int_{B_R} \mathcal{G}_0(\nabla U)\,dx\right)+C\,R^{\gamma-2}\,\|f\|_{L^\gamma(B_R)}^\gamma,
\]
possibly for a different $C=C(N,p_N,p_1)>0$.
We claim that for every \(1\leq i \leq N\) and for almost every \(x\in B_{R/4}\), we have
\begin{equation}
\label{eq-id-Vi}
\mathcal{V}_i(x)=\frac{2}{p_i}\,\sqrt{p_i-1}\,|U_{x_i}(x)|^{(p_i-2)/2}\,U_{x_i}(x).
\end{equation}
Indeed, let us take $x\in B_{R/4}$ such that $u_{x_i}^{\varepsilon_k}$ converges to $U_{x_i}$ and such that $|U_{x_i}(x)|<+\infty$. Observe that the collection of these points has full measure in $B_{R/4}$. We then set
\[
M_i(x)=\sup_{k\in\mathbb{N}} |u_{x_i}^{\varepsilon_k}(x)|,
\]
which is finite, by construction.
Then, for every \(k\geq 0\)  we have
\[
\begin{split}
\left|\mathcal{V}_{i, \varepsilon_k}(x)-\frac{2}{p_i}\,\sqrt{p_i-1}\,|U_{x_i}(x)|^{\frac{p_i-2}{2}}\,U_{x_i}(x)\right|&= \left|V_{i,\varepsilon_k}(u^{\varepsilon_k}_{x_i}(x))- \frac{2}{p_i}\,\sqrt{p_i-1}\,|U_{x_i}(x)|^{\frac{p_i-2}{2}}\,U_{x_i}(x)\right|\\
&\le  \left|\int_{0}^{u_{x_i}^{\varepsilon_k}(x)}\left(\sqrt{g_{i,\varepsilon_k}''(\tau)}-\sqrt{(p_i-1)|\tau|^{p_i-2}}\right)\,d\tau\right|\\
&+\frac{2}{p_i}\,\sqrt{p_i-1}\,\left||u_{x_i}^{\varepsilon_k}(x)|^{\frac{p_i-2}{2}}\,u_{x_i}^{\varepsilon_k}(x)-|U_{x_i}(x)|^{\frac{p_i-2}{2}}\,U_{x_i}(x)\right|\\
&\leq \int_{0}^{M_i(x)}\left| \sqrt{g_{i,\varepsilon_k}''(\tau)}-\sqrt{(p_i-1)|\tau|^{p_i-2}}\right|\,d\tau\\
&+\frac{2}{p_i}\,\sqrt{p_i-1}\,\left||u_{x_i}^{\varepsilon_k}(x)|^{\frac{p_i-2}{2}}\,u_{x_i}^{\varepsilon_k}(x)-|U_{x_i}(x)|^{\frac{p_i-2}{2}}\,U_{x_i}(x)\right|.
\end{split}
\]
Thanks to \eqref{serve} 
one can apply the dominated convergence to conclude that the first term in the right-hand side converges to \(0\) when \(k\) goes to \(+\infty\). By also using that $u^{\varepsilon_k}_{x_i}(x)$ converges to $U_{x_i}(x)$, we finally get \eqref{eq-id-Vi}.
\par
By using the Chain Rule in Sobolev spaces, we also obtain that \(U_{x_i}\in W^{1,p_i}(B_{R/4})\) and satisfies the estimate claimed in the statement of Theorem S. The proof is now over.

\appendix

\section{A weak maximum principle}\label{appA}

Let \(G:\mathbb{R}^N\to [0,+\infty)\) be a  strictly convex function such that \(G(0)=0\). Let \(\zeta : \mathbb{R}\to \mathbb{R}\) be a  Lipschitz function, with the following property: there exists \(M>0\)  such that 
\[
\zeta(t)=\left\{\begin{array}{cc}
M,& \mbox{ if } t\ge M,\\
-M,& \mbox{ if } t\le -M.
\end{array}
\right.
\]
Given a ball \(B\subset \mathbb{R}^N\), $f\in L^1(B)$ and \(U\in W^{1,1}(B)\cap L^\infty(B)\) such that 
\[
\|U\|_{L^\infty(B)}\le M\qquad \mbox{ and }\qquad \int_B G(\nabla U)\,dx<+\infty,
\] 
we consider the functional
\[
\mathcal{F}(v)=\int_{B}\Big[G(\nabla v) + f\,\zeta(v)\Big]\,dx,\qquad \mbox{ for every }v\in U+W^{1,1}_0(B). 
\]
\begin{lm}\label{lm-bd-1}
If \(u\) is a minimum of \(\mathcal{F}\), then \(\|u\|_{L^{\infty}(B)}\leq M\). 
\end{lm}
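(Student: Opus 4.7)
The plan is to use a truncation argument based on comparing $u$ with its truncation at level $M$. Define
\[
\tilde u := \max\bigl\{\min\{u, M\}, -M\bigr\}.
\]
The first step is to check that $\tilde u$ is an admissible competitor, i.e.\ $\tilde u - U \in W^{1,1}_0(B)$. Since $\|U\|_{L^\infty(B)}\le M$, the pointwise identity $\tilde U = U$ holds, so $\tilde u - U = \tilde u - \tilde U$ is the difference of truncations at level $M$ of the functions $u$ and $U$; because truncation at a constant preserves the zero boundary trace (via the standard Stampacchia calculus in $W^{1,1}$) and because $u - U \in W^{1,1}_0(B)$, this difference belongs to $W^{1,1}_0(B)$.

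Next I would compute $\mathcal{F}(u) - \mathcal{F}(\tilde u)$. Using $G(0)=0$ and the standard Stampacchia rule $\nabla \tilde u = \nabla u\,\mathbf{1}_{\{|u|\le M\}}$, the principal part gives
\[
\int_B G(\nabla u)\,dx - \int_B G(\nabla \tilde u)\,dx = \int_{\{|u|>M\}} G(\nabla u)\,dx.
\]
For the lower order term, observe that on $\{|u|\le M\}$ we have $\tilde u = u$, hence $\zeta(\tilde u) = \zeta(u)$; on $\{u>M\}$ both $\tilde u = M$ and $\zeta(u) = \zeta(M) = M$, so $\zeta(\tilde u) = \zeta(u)$ there too (and symmetrically for $\{u<-M\}$). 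Thus the $f\zeta$ integrals cancel exactly and
\[
\mathcal{F}(u) - \mathcal{F}(\tilde u) = \int_{\{|u|>M\}} G(\nabla u)\,dx.
\]

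From the minimality of $u$ this quantity is $\le 0$; since $G \ge 0$, it is also $\ge 0$, so $G(\nabla u) = 0$ a.e.\ on $\{|u|>M\}$. Here I use the hypotheses on $G$ to deduce $\nabla u = 0$ a.e.\ on that set: if $G(z_0)=0$ for some $z_0\ne 0$, strict convexity together with $G(0)=0$ and $G\ge 0$ would yield $G(z_0/2) < \tfrac{1}{2}G(z_0)+\tfrac{1}{2}G(0)=0$, a contradiction, so $G(z)>0$ for all $z\ne 0$.

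Finally I conclude as follows: $u - \tilde u \in W^{1,1}_0(B)$ (since both $u - U$ and $\tilde u - U$ belong to $W^{1,1}_0(B)$), and it is supported in $\{|u|>M\}$, where $\nabla u = 0$ and therefore $\nabla(u-\tilde u) = 0$ a.e. Hence $u - \tilde u = 0$ a.e.\ in $B$, i.e.\ $u = \tilde u$ a.e., which gives $\|u\|_{L^\infty(B)} \le M$. The main subtlety to handle carefully is the bookkeeping around the truncation of Sobolev functions (to justify admissibility and the chain rule for $\nabla \tilde u$); the comparison itself is then completely algebraic.
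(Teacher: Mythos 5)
Your proposal is correct and takes essentially the same approach as the paper: truncate $u$ at level $M$, use minimality together with $\zeta$ being constant outside $[-M,M]$ to force $\int_{\{|u|>M\}} G(\nabla u)\,dx = 0$, then use $G>0$ off the origin to conclude $\nabla u = 0$ there and hence $u$ coincides with its truncation. Your added remarks on admissibility of the competitor and on deriving $G(z)>0$ for $z\neq 0$ from strict convexity are correct details that the paper states without elaboration.
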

\begin{proof}
We want to test the minimality of $u$ against the truncated function
\[
v:=\max\{-M, \min\{u, M\}\}.
\] 
By construction, we still have \(v\in U+W^{1,1}_0(B)\) and by minimality of \(u\), we get
\[
\mathcal{F}(u)\leq \mathcal{F}(v).
\]
By the properties of \(\zeta\) and the fact that \(G(0)=0\), we have
\[
\mathcal{F}(v)=\int_{\{|u|\leq M\}} \Big[G(\nabla u) + f\,\zeta(u)\Big]\,dx + \int_{\{u>M\}}f\,\zeta(M)\,dx +\int_{\{u<-M\}} f\,\zeta(-M)\,dx.  
\]
Hence, using that \(\zeta(u)=\zeta(M)\) when \(u\geq M\) and \(\zeta(u)=\zeta(-M)\) when \(u\leq -M\), by comparing the last two displays
we get
\[
\int_{\{|u|>M\}}G(\nabla u)\,dx= 0.  
\]
Since \(G\geq 0\) and \(G(\xi)=0\) if and only if \(\xi=0\), we deduce that \(\nabla u = 0\) almost everywhere on the set \(\{|u|>M\}\). It follows that \(\nabla u = \nabla v\) almost everywhere and since \(u=v=U\) on \(\partial B\), this implies that \(u=v\) almost everywhere in \(B\). In particular, \(|u|\leq M\) almost everywhere in \(B\).
\end{proof}

\medskip

\end{document}